\newlength{\mylen}
\newcommand{\tabincell}[2]{\begin{tabular}{@{}#1@{}}#2\end{tabular}}
\crefname{hypothesis}{Hypothesis}{Hypotheses}
\crefname{fact}{Fact}{Facts}
\newcommand{\R}{\mathbb{R}}
\newcommand{\Rp}{\mathbb{R}_{+}}
\newcommand{\Rpp}{\mathbb{R}_{++}}
\newcommand{\Rsig}{\mathbb{R}^{\boldsymbol \sigma}}
\newcommand{\Rsigp}{\mathbb{R}^{\boldsymbol \sigma}_{+}}
\newcommand{\Rsigpp}{\mathbb{R}^{\boldsymbol \sigma}_{++}}
\newcommand{\bssig}{{\boldsymbol \sigma}}
\newcommand{\sigp}{({\boldsymbol \sigma}, {\bf p})}
\newcommand{\barf}[1]{{\bf {\bar #1}}}
\newtheorem{assumption}[theorem]{Assumption}
\newtheorem{example}[theorem]{Example}
\newcommand{\diag}{\mathrm{diag}}
\title{A Variational Characterization and
	A Line Search Newton-Noda Method for
	the unifying spectral
	problem of nonnegative tensors
	\thanks{Submitted to the editors \today.
		\funding{This work was supported partly by the the National Natural Science Foundation of China under grants 12271187 and 12301396.}}
	}
\author{
	Jiefeng Xu\thanks{Department of Applied Mathematics, 
    the Hong Kong Polytechnic University, 
    Hong Kong, 
    People’s Republic of China (\email{jiefeng.xu@polyu.edu.hk}).}
	\and Xueli Bai\thanks{School of Mathematics and Statistics, Guangdong University of Foreign Studies, Guangzhou, 
    Guangdong, People’s Republic of China (\email{202210047@gdufs.edu.cn}).}
	\and Dong-Hui Li\thanks{School of Mathematical Sciences, South China Normal University, Guangzhou,
Guangdong, People’s Republic of China (\email{lidonghui@m.scnu.edu.cn}).}
	}
\begin{document}
	
	\maketitle
	
	\begin{abstract}
	We study the general $(\boldsymbol{\sigma},\mathbf{p})$-eigenvalue problem of nonnegative tensors introduced by A. Gautier,
    F. Tudisco, and M. Hein [SIAM J. Matrix Anal. Appl., 40 (2019), pp. 1206--1231], which unifies several well-studied tensor eigenvalue and singular value problems.
	First, we propose an alternative min-max Collatz--Wielandt formula for the $(\boldsymbol{\sigma},\mathbf{p})$-spectral radius,
	which bypasses the auxiliary multihomogeneous mapping employed in that work.
	This variational characterization both recovers several classical results
	and admits a natural convex reformulation.
	It arises from an alternative approach that directly connects the $(\boldsymbol{\sigma},\mathbf{p})$-spectral problem  to a class of convex programs.	
		
	We then develop and analyze a line search Newton-Noda method (LS-NNM) for computing the positive $(\boldsymbol{\sigma},\mathbf{p})$-eigenpair of nonnegative tensors.
    The proposed method integrates Newton method with Noda iteration.
	The Newton equation is derived from an equivalent nonlinear system, while the eigenvalue sequence is updated	by the strategy of the Noda iteration and its variants.
	To ensure global convergence, we introduce a positivity-preserving line search procedure
	based on an equivalent constrained optimization problem.
	The global and quadratic convergence of LS-NNM are established for the class of $(\bssig, {\bf p})$-spectral problem that admits 
    a unique positive $(\bssig, {\bf p})$-eigenpair,
    as guaranteed by the Perron-Frobenius theorem. 
    Finally, numerical experiments are conducted to illustrate the performance of LS-NNM.
	\end{abstract}
	
	\begin{keywords}
Nonnegative tensor, tensor eigenvalue, 
tensor singular value,
Collatz–Wielandt formula, 
convex programming,
Newton method, 
Noda iteration
	\end{keywords}
	
	\begin{MSCcodes}
	15A18, 15A69, 90C25, 65K05, 65F15
	\end{MSCcodes}
	\section{Introduction}
	The spectral problem of higher-order tensors has important applications in spectral hypergraph theory, higher-order Markov chains, network science, and magnetic resonance imaging; see, e.g., \cite{AT19,BGL17,LN14,NQZ10,NTGH16,QT03,SCR22,TH21,ZZJNH22}. Unlike matrices, higher-order tensors are inherently multi-dimensional, which has led to multiple, distinct definitions of tensor eigenvalues and singular values in the literature.
	
	In this work, we focus on the general $(\boldsymbol{\sigma},\mathbf{p})$-eigenvalue problem
	of nonnegative tensors introduced by Gautier et al.\ \cite{GTH19,GTH23}.
	Here, $\boldsymbol{\sigma}$ denotes a {\emph{shape partition}} of a tensor
	(i.e., a division that divides its modes into disjoint blocks 
	such that all modes within the same block have the same dimension) 
	and $\mathbf{p}$ is a vector of normalization parameters; 
	see the formal definition in section~\ref{pre}.
	This framework unifies several well-known 
	tensor eigenvalue and singular value problems, including: 
	\begin{itemize}
		\item the classical matrix eigenvalue and singular value problems;
		\item the tensor Z-, H- and $\ell^{p_1}$-eigenvalue problems \cite{CPZ09,Lim2005Singular,Qi05EigenTen};
		\item the tensor $\ell^{p_1,\dots,p_m}$-singular value problems \cite{CQZ10,Lim2005Singular}.
	\end{itemize}
	These problems have been extensively studied from both theoretical and algorithmic perspectives. 
	
	\subsection{Collatz–Wielandt Formula}
	A central theme in the theory of nonnegative tensors is the extension of the Perron--Frobenius theorem from matrices to tensors, 
	which concerns the existence and uniqueness of positive eigenpairs. 
	The first systematic Perron--Frobenius theorem for nonnegative tensors was established by
	Chang et al.\ \cite{CPZ08},
	with further results developed by Yang and Yang \cite{YY10,YY11}.
	Advances on the tensor singular value problem were presented in \cite{CQZ10,FGH13,LQ13,YY11Singular}.
	More recently, Gautier et al.\ \cite{GTH19,GTH23} unified these strands into the Perron--Frobenius theorem for the \((\boldsymbol{\sigma},\mathbf{p})\)-eigenvalue problem of nonnegative tensors,
	which either implies earlier results or improves them by requiring weaker assumptions.
	
	Alongside this, the Collatz–Wielandt formula, which provides a variational characterization of the spectral radius of nonnegative matrices, has also been extended to tensors \cite{CQZ10,CPZ08,FGH13,GTH19,LQ13,YY11}. 
	For the \((\boldsymbol{\sigma},\mathbf{p})\)-spectral radius,
	Gautier et al.\ \cite{GTH19} proposed a Collatz--Wielandt formula in terms of a multihomogeneous mapping under some sphere constraints (see \cite[Theorem~3.2(ii)]{GTH19}, stated as Lemma~\ref{pf-sp}(ii) below). 
	However, it does not align with earlier extensions of the Collatz–Wielandt formula to tensor eigenvalue problems 
	in \cite{CQZ10,CPZ08,FGH13,LQ13}. 
	In particular, the sphere constraints in Lemma~\ref{pf-sp}(ii) appear difficult to relax 
	in a straightforward manner.
	
	This difficulty prevents a direct reformulation 
	of the \((\boldsymbol{\sigma},{\bf p})\)-spectral problem as a convex optimization problem.
	This stands in contrast to the observation of Yang and Yang \cite{YY11}, who illustrated that, for a square (or rectangular) 
	nonnegative irreducible tensor, the problem of finding the spectral radius (or largest singular value) 
	can be reformulated as a convex optimization problem via the Collatz--Wielandt formula. 
	These observations naturally lead to the following question:
	
	\emph{Is there a more intrinsic Collatz--Wielandt formula for the 
		\((\boldsymbol{\sigma},{\bf p})\)-spectral radius that both unifies earlier cases 
		and admits relaxed constraints,
		so that	the general \((\boldsymbol{\sigma},{\bf p})\)-spectral problem can be reformulated as a convex optimization problem?}
	
	\subsection{Algorithmic Landscape}
	For computing the largest eigenvalue of nonnegative tensors, a variety of iterative methods have been developed, including both first-order and higher-order methods. 
	Among the first-order methods, the NQZ method \cite{NQZ10} was proposed for computing the largest  H-eigenvalue of nonnegative tensors, 
	with convergence results established for structured nonnegative tensors in \cite{CPZ11,LZI10,ZQ12}. 
	Subsequent work introduced power methods for computing the largest $\ell^{p_1,p_2}$-singular value \cite{CQYY13,LQ13}
	and $\ell^{p_1,\ldots, p_d}$-singular value \cite{FGH13} of nonnegative tensors.
	More recently, Gautier et al. \cite{GTH19,GTH23} proposed a general power method for computing the $(\bssig, {\bf p})$-spectral radius of nonnegative tensors, 
	along with a variant power method that converges under weaker assumptions.
	
	Existing higher-order methods, however, are primarily designed for computing the largest H-eigenvalue of nonnegative tensors. 
	For instance, Ni and Qi \cite{NQ15} proposed a Newton method based on equivalent nonlinear equations and further globalized it using a line search technique. 
	Sheng et al.\ \cite{SNY19} introduced an inverse iteration method with local quadratic convergence,
    while Yang and Ni \cite{YN18} developed a Chebyshev-based method with local cubic convergence.
	However, it remains unclear whether the globalization of these methods \cite{NQ15,SNY19,YN18},
	based on a positive-preserving line search procedure,
	eventually attain a unit step length.
	Consequently, the global quadratic or cubic convergence of these methods has yet to be established.
	
	Besides, the Noda iteration \cite{Noda71}, 
	originally developed for computing the largest eigenvalue of a nonnegative irreducible matrix, 
	has been extended to compute the largest H-eigenvalue of a weakly irreducible nonnegative tensor \cite{DW16,Liu22}.
	Although the higher-order Noda iteration in \cite{DW16,Liu22}
	enjoys superlinear convergence, 
	each iteration is costly, 
	as it involves solving nonsingular ${\cal M}$-tensor equations. 
	
	In parallel, combining the idea of Newton method and Noda iteration,
	Liu et al.\ \cite{LGL16}
	proposed a Newton–Noda iteration (NNI) method for computing the largest H-eigenvalue of an irreducible nonnegative third-order tensor.
	They later extended this approach to weakly irreducible nonnegative tensors with arbitrary order \cite{LGL17}. 
	A notable advantage of the NNI method is that the associated Newton equation is globally well-defined and enjoys
	a global quadratic convergence, under the assumption that the positive parameter $\{\theta_{k}\}$, determined by 
	the halving procedure in \cite{LGL17}, is bounded below by a positive constant.
	This technical assumption was recently removed by Guo \cite{Guo24}, thereby strengthening the convergence guarantee.
	
	Despite significant progress in computing the largest H-eigenvalue of nonnegative tensors, to the best of our knowledge, 
	no higher-order algorithm has yet been proposed for computing the $(\bssig, {\bf p})$-spectral radius,
	or even for computing the singular value of tensors.
	Motivated by the progress of NNI \cite{Guo24, LGL16,LGL17}, we naturally ask:
	
	\emph{Can we develop a Newton-Noda method for computing the $(\boldsymbol{\sigma}, {\bf p})$-spectral radius of nonnegative tensors with global and quadratic convergence?}
	
	\subsection{Contributions and organization}
	This paper addresses the questions posed in the preceding sections.
	
	Our first main contribution is new min-max Collatz--Wielandt formulas for the $(\boldsymbol{\sigma},\mathbf{p})$-spectral radius of nonnegative tensors
	(see Theorem~\ref{thr-minmax}). They avoid reliance on the auxiliary multihomogeneous mapping used in Lemma~\ref{pf-sp},
	recover several well-known special cases previously studied in \cite{CQZ10,CPZ08,FGH13,LQ13},
	and admit convex reformulations.
	These formulas arise naturally from a novel approach that connects the $(\boldsymbol{\sigma},\mathbf{p})$-spectral problem for nonnegative tensors to a class of convex optimization problems; see Theorems~\ref{th-str-nonneg} and~\ref{thr--weak-irr}.	
		
	Our second contribution is the development and analysis of a line search Newton-Noda method (LS-NNM) for computing the positive $(\boldsymbol{\sigma},\mathbf{p})$-eigenpair of nonnegative tensors.
	The method is formulated from both the nonlinear system and constrained optimization perspectives.
	Specifically, we derive the Newton equation from an equivalent nonlinear system and update the eigenvalue sequence
	using the strategy of the Noda iteration and its variants \cite{LGL16,LGL17,Noda71}.
	This update rule ensures that the Newton equations remain well-defined throughout the iteration.
	Under a suitable retraction, the resulting Newton direction is shown to be a feasible descent direction
	for an equivalent constrained optimization problem.
	Building on this observation,
	LS-NNM incorporates a positivity-preserving line search procedure to ensure global convergence.
	Finally, we establish global convergence and quadratic rate of LS-NNM, for the class of $(\bssig, {\bf p})$-spectral problem that admits 
	a unique positive $(\bssig, {\bf p})$-eigenpair,
	as characterized by the Perron-Frobenius theorem.
	
	The remainder of this paper is organized as follows. Section~\ref{pre} introduces notation and preliminary materials. In section~\ref{sect-prop-eig}, we present new min–max Collatz–Wielandt formulas and establish the connection between the $({\boldsymbol \sigma}, {\bf p})$-spectral problem and a class of convex optimization problems, 
	with auxiliary results provided in section~\ref{subsec-minmax-pre} and main developments in sections~\ref{subsec-minmax-str-nonneg} and \ref{subsec-minmax-weak-irr}. 
	Our algorithm is described in section~\ref{sec-LSNNM}. Its well-definedness and global convergence are studied in section~\ref{sec-gc}, and its quadratic convergence is established in section~\ref{sec-qc}. Section \ref{sec-ne} displays numerical performance of LS-NNM.
	
	\section{Notation and Preliminaries}\label{pre}
	Throughout the paper, let $\mathbb{R}$ be the set of real numbers, and $\mathbb{R}^n$ be the $n$-dimensional Euclidean space, equipped with the standard inner product $\langle \cdot, \cdot \rangle$.
	The $\ell_{p}$-norm ($p\ge 1$) on $\R^{n}$ is denoted by $\|\cdot\|_{p}$, that is, $\|{\bf x}\|_{p} \coloneq \left(\sum_{i=1}^{n}|x_{i}|^{p}\right)^{1/p}$ for any ${\bf x}=(x_1,\ldots,x_n)^\top \in \R^{n}$. Let $\mathbb{R}^{n}_{+}$ be the nonnegative orthant on $\R^{n}$, and $\Rpp^{n} \coloneq {\mathrm{int}}(\Rp^{n})$, where $\mathrm{int}(C)$ denotes the interior of a set $C\subseteq \R^{n}$.
	For ${\bf x} \in \R^{n}$, ${\bf x}\ge 0$ (resp. ${\bf x} > 0$) denotes ${\bf x} \in \Rp^{n}$ (resp. ${\bf x} \in \Rpp^{n}$). For a vector ${\bf x}\in \mathbb{R}^n_{++}$ and a scalar $\alpha\in \mathbb{R}$, we define ${\bf x}^{[\alpha]} \coloneq (x_{1}^\alpha, \ldots, x_{n}^\alpha)^{\top} \in \mathbb{R}^{n}$. We use `$\circ$' to denote the Hadamard (element-wise) product of two vectors, that is, ${\bf x} \circ {\bf y} \coloneq (x_1 y_1, \ldots, x_n y_n)^\top \in \mathbb{R}^{n}$ for any ${\bf x}=(x_1,\ldots,x_n)^\top, {\bf y}=(y_1,\ldots,y_n)^\top\in \mathbb{R}^{n}$. Then, we can further define $\frac{{\bf x}}{\bf y} \coloneq {\bf x}\circ ({\bf y}^{[-1]})$ for ${\bf x} \in \R^{n}$ and ${\bf y}\in \Rpp^{n}$.

	A real tensor ${\cal A}=(a_{j_1\cdots  j_m})$ of $m$th order and dimensions $(N_1,\ldots, N_m)$ is a multi-array with entries $a_{j_1\cdots  j_m}\in\mathbb{R}$, where $j_i\in [N_i]$ for all $i\in [m]$. Here $[n] \coloneq \{1,\ldots,n\}$ for any positive integer $n$.
	We denote the set of such tensors by $\mathbb{R}^{N_1\times\cdots \times N_m}$, and the subset of entrywise nonnegative tensors by $\mathbb{R}_{+}^{N_1 \times \cdots \times N_m}$. The set of $n\times n$ real matrices is denoted by $\mathbb{R}^{n\times n}$, and we use $I$ to denote the identity matrix in $\mathbb{R}^{n\times n}$.
	
	For a tensor ${\cal A}=(a_{j_1\cdots  j_m})\in \mathbb{R}^{N_1\times\cdots \times N_m}$, the associated multilinear form $f_{{\cal A}}:\mathbb{R}^{N_1}\times\cdots\times\mathbb{R}^{N_m}\rightarrow\mathbb{R}$ is given by
	\begin{equation}
		\label{eq-f-A-def}
		f_{{\cal A}}({\bf z}_1,\ldots,{\bf z}_m) \coloneq \sum_{j_1\in[N_1],\ldots,j_m\in[N_m]}a_{j_1\cdots j_m}z_{1,j_1}z_{2,j_2}\cdots z_{m,j_m},
	\end{equation}
	for every ${\bf z}=({\bf z}_1,\ldots,{\bf z}_m)\in \mathbb{R}^{N_1}\times\cdots\times\mathbb{R}^{N_m}$.
	The gradient of $f_{{\cal A}}$ is a mapping ${\mathscr A} = ({\mathscr A}_{1}, \ldots, {\mathscr A}_{m})$, 
	where for each $i\in [m]$, ${\mathscr A}_i=({\mathscr A}_{i,1},\ldots,{\mathscr A}_{i,N_i})$, 
	and for each $j_i\in [N_i]$, 
	the component ${\mathscr A}_{i,j_i}:\mathbb{R}^{N_1}\times\cdots\times\mathbb{R}^{N_m}\rightarrow\mathbb{R}$ is defined as
	\begin{equation}\label{eq-A-scr-def}
		{\mathscr A}_{i,j_i}({\bf z}_1,\ldots,{\bf z}_m) 
		\coloneq \sum_{
			\begin{subarray}{c}
				j_{l} \in [N_{l}], l\in [m]\setminus\{i\}
			\end{subarray}
		}	
		a_{j_1\cdots j_m}z_{1,j_1}\cdots z_{i-1,j_{i-1}} z_{i+1,j_{i+1}}\cdots z_{m,j_m}.
	\end{equation}

	\subsection{Shape partition and multihomogeneous mapping}
	Following \cite[Definition~2.1]{GTH19}, a partition ${\boldsymbol \sigma} = \{\sigma_i\}^d_{i=1}$ of $[m]$, i.e., $\cup_{i=1}^d \sigma_i=[m]$ and $\sigma_i \cap \sigma_j=\emptyset$ for any $i \neq j$, is said to be a shape partition of a tensor ${\cal A}\in \mathbb{R}^{N_1\times\cdots \times N_m}$ if it holds that: (i) $N_j=N_{j^{\prime}}$ for all $j, j^{\prime} \in \sigma_i$ and all $i \in[d]$;
	(ii) $j \leq k$ for all $j \in \sigma_i,\, k \in \sigma_{i+1}$ and $i \in[d-1]$ (if $d>1$); (iii) $|\sigma_{i}| \le |\sigma_{i+1}|$ for all $i\in [d-1]$ (if $d>1$). 
	
	The shape partition $\bssig$ of ${\mathcal A}$ can be characterized by three vector indicators ${\boldsymbol \nu}, {\bf s}, {\bf n} \in \mathbb{R}^{d}$, whose elements are given by:
	\begin{equation}\label{eq-nu-s-n-def}
		\nu_i:=|\sigma_i|,\quad s_i:=\min\{j: j\in\sigma_i\},\quad n_i:=N_{s_i}, \quad \forall i\in [d].
	\end{equation}
	Then, the size $N_1\times \cdots \times N_{m}$ of $\mathcal{A}$ can be rewritten as
	\newlength{\mylena}
	\newlength{\mylenb}
	\newlength{\mylend}
	\settowidth{\mylena}{$N_{s_1} \times \cdots \times N_{s_2-1}$}
	\settowidth{\mylenb}{$N_{s_2} \times \cdots \times N_{s_3-1}$}
	\settowidth{\mylend}{$N_{s_d} \times \cdots \times N_{m}$}
	\begin{equation*}
		\begin{array}{cccccccc}
			&\ \overbrace{N_{s_1}  \times \cdots \times  N_{s_2-1}}^{\sigma_1}  
			& \times 
			&  \overbrace{N_{s_2}  \times \cdots \times  N_{s_3-1}}^{\sigma_2}  
			& \times & \cdots & \times  
			& \overbrace{N_{s_d}  \times \cdots \times N_{m}}^{\sigma_d}
			\\& \makebox[\mylen][c]{$\rotatebox{90}{=}\ \, ~~~~~~~~~~~\, \ \rotatebox{90}{=}\ \ $}
			&   
			& \makebox[\mylen][c]{$\rotatebox{90}{=}\ \, ~~~~~~~~~~~\, \ \ \rotatebox{90}{=}\ \ $}
			&   &  &  
			& \makebox[\mylen][c]{$\ \, \rotatebox{90}{=} \ \, ~~~~~~~~~~~ \, \,
				\rotatebox{90}{=}\ \ $}
			\\& \underbrace{\makebox[\mylena][c]{$n_1\, \times \cdots \times\, n_1\ \ $}}_{\nu_1 \text { times }} 
			&  \times 
			& \underbrace{\makebox[\mylenb][c]{$n_2\, \times \cdots \times\, \ n_2\ \ $}}_{\nu_2 \text { times }}
			& \times & \cdots & \times 
			& \underbrace{\makebox[\mylend][c]{$n_d\, \times \cdots \times\, n_d$}}_{\nu_d \text { times }}
		\end{array}.
	\end{equation*}
	Let ${\bf  p}=(p_1,\ldots,p_d)\in(1,\infty)^d$, throughout the paper, we denote 
	\begin{align*}
		& \mathbb{R}^{{\boldsymbol \sigma}} \coloneq \mathbb{R}^{n_1}\times\cdots\times\mathbb{R}^{n_d}, \quad {\mathbb R}^{{\boldsymbol \sigma}}_+ \coloneq \mathbb{R}^{n_1}_+\times\cdots\times\mathbb{R}^{n_d}_+,
		\quad {\mathbb R}^{{\boldsymbol \sigma}}_{++} = \mathrm{int} ({\mathbb R}^{{\boldsymbol \sigma}}_{+}),
		\\ & {\mathcal S}^{({\boldsymbol \sigma},{\bf  p})}_+ \coloneq \{{\bf x}\in {\mathbb R}_{+}^{{\boldsymbol \sigma}} : \|{\bf x}_{i}\|_{p_i} = 1,\ i\in [d],\ x_i\in {\mathbb R}^{n_i}\}
		\quad\text{and}\quad
		{\mathcal S}^{({\boldsymbol \sigma},{\bf  p})}_{++} = {\mathcal S}^{({\boldsymbol \sigma},{\bf  p})}_+ \cap {\mathbb R}^{{\boldsymbol \sigma}}_{++}. \label{eq-sphere}
	\end{align*}
	For any ${\bf x} = ({\bf x}_1, \ldots, {\bf x}_d) \in \mathbb{R}^{{\boldsymbol \sigma}}$, ${\bf x}^{[{\boldsymbol \sigma}]}\in \mathbb{R}^{N_1}\times\cdots\times\mathbb{R}^{N_m}$ is defined by
	\begin{equation}\label{eq-x-sigma-def}
		{\bf x}^{[{\boldsymbol \sigma}]} \coloneq (\underbrace{{\bf x}_1,\ldots,{\bf x}_1}_{\nu_1~~\text{times}},\underbrace{{\bf x}_2,\ldots,{\bf x}_2}_{\nu_2~~\text{times}},\ldots,\underbrace{{\bf x}_d,\ldots,{\bf x}_d}_{\nu_d~~\text{times}}).
	\end{equation}
	
	Next, we recall the definition of multihomogeneous mapping (see, e.g., \cite[Section 2.1]{GTH19Mult}). A mapping ${P}:\mathbb{R}_+^{{\boldsymbol \sigma}} \rightarrow \mathbb{R}_+^{{\boldsymbol \sigma}}$ is said to be multihomogeneous with a nonnegative homogeneity matrix $B=(b_{ij}) \in \mathbb{R}^{d\times d}_{+}$ if it holds that
	$$
	{P}({\boldsymbol \theta} \otimes {\bf x})={\boldsymbol \theta}^B \otimes {P}({\bf x}), \quad\forall {\bf x} \in \mathbb{R}_+^{{\boldsymbol \sigma}}, {\boldsymbol \theta} \in \mathbb{R}_{+}^d,
	$$
	where ${\boldsymbol \theta} \otimes {\bf x} \coloneq \left(\theta_1 {\bf x}_1, \ldots, \theta_d {\bf x}_d\right)\in{\mathbb R}^{{\boldsymbol \sigma}}$, and ${\boldsymbol \theta}^B\in\mathbb{R}^d_+$ with entries $({\boldsymbol \theta}^B)_i \coloneq \prod_{j=1}^d \theta_j^{b_{i j}}$ for any $i\in[d]$.
	For a differentiable mapping ${F}:\mathbb{R}^{{\boldsymbol \sigma}} \rightarrow \mathbb{R}^{{\boldsymbol \sigma}}$, its Jacobian at ${\bf x} \in \mathbb{R}^{{\boldsymbol \sigma}}$ is the linear map $D{F}({\bf x})$ defined by
	$$
\textstyle	D{F}({\bf x}){\bf d}:= \lim_{t\rightarrow 0} \frac{{F}({\bf x} + t {\bf d}) - {F}({\bf x})}{t}, \quad 
	\forall {\bf d} \in \mathbb{R}^{{\boldsymbol \sigma}}.
	$$
	
	The following lemma gives a fundamental property of a differentiable multihomogeneous mapping. 
	
	\begin{lemma}\label{lm-multihomo}
		Let ${P}:\Rsigpp \to \Rsigpp$ be a differentiable multihomogeneous mapping with a homogeneity matrix $B=(b_{ij})\in \mathbb{R}^{d\times d}_{+}$. Then, it holds that
		$$
		D{P}({\bf x}) (\boldsymbol \theta \otimes {\bf x}) = (B \boldsymbol\theta)\otimes {P}({\bf x}),\quad \forall {\bf x} \in \Rsigpp,~\boldsymbol{\theta} \in \mathbb{R}_{++}^d.
		$$
	\end{lemma}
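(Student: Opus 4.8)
The plan is to regard the defining multihomogeneity identity $P(\boldsymbol\theta\otimes{\bf x}) = \boldsymbol\theta^{B}\otimes P({\bf x})$ as an identity in the scaling vector $\boldsymbol\theta$ and to differentiate it at $\boldsymbol\theta = {\bf 1}$ along a suitable curve; this yields a tensorial analogue of Euler's identity for homogeneous functions. Concretely, fix ${\bf x}\in\Rsigpp$ and $\boldsymbol\theta\in\mathbb{R}_{++}^{d}$, and define the curve $\boldsymbol\theta(t) \coloneq \bigl(e^{t\theta_{1}},\ldots,e^{t\theta_{d}}\bigr)$ for $t\in\mathbb{R}$. Then $\boldsymbol\theta(0) = {\bf 1}$, the derivative $\tfrac{d}{dt}\boldsymbol\theta(t)\big|_{t=0} = \boldsymbol\theta$, and $\boldsymbol\theta(t)\otimes{\bf x}\in\Rsigpp$ for every $t$, so the curve stays in the domain of $P$.

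Next I would substitute $\boldsymbol\theta(t)$ into the multihomogeneity identity, obtaining $P\bigl(\boldsymbol\theta(t)\otimes{\bf x}\bigr) = \boldsymbol\theta(t)^{B}\otimes P({\bf x})$ for all $t\in\mathbb{R}$, and differentiate both sides at $t=0$. For the left-hand side, $t\mapsto\boldsymbol\theta(t)\otimes{\bf x}$ is a differentiable curve through ${\bf x}$ with velocity $\boldsymbol\theta\otimes{\bf x}$ at $t=0$, so the chain rule together with the differentiability of $P$ at ${\bf x}$ gives $\tfrac{d}{dt}P(\boldsymbol\theta(t)\otimes{\bf x})\big|_{t=0} = DP({\bf x})(\boldsymbol\theta\otimes{\bf x})$. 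On the right-hand side, the factor $P({\bf x})$ is constant in $t$, so it remains only to differentiate $t\mapsto\boldsymbol\theta(t)^{B}$ componentwise.

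The single computation to carry out is exactly this last derivative: by the definition of $\boldsymbol\theta^{B}$ one has $\bigl(\boldsymbol\theta(t)^{B}\bigr)_{i} = \prod_{j=1}^{d}\theta_{j}(t)^{b_{ij}} = \prod_{j=1}^{d}e^{t b_{ij}\theta_{j}} = e^{t(B\boldsymbol\theta)_{i}}$, so that $\boldsymbol\theta(0)^{B} = {\bf 1}$ and $\tfrac{d}{dt}\bigl(\boldsymbol\theta(t)^{B}\bigr)_{i}\big|_{t=0} = (B\boldsymbol\theta)_{i}$. Hence the derivative of the right-hand side at $t=0$ equals $(B\boldsymbol\theta)\otimes P({\bf x})$, and equating it with the left-hand side proves the claim. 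I do not expect any genuine obstacle here beyond this bookkeeping; the only point needing a little care is to keep the chosen curve inside $\Rsigpp$ so that the differentiability hypothesis on $P$ can be invoked legitimately — the exponential parametrization makes this automatic, although $\boldsymbol\theta(t) = {\bf 1} + t\boldsymbol\theta$ for $|t|$ small would serve equally well.
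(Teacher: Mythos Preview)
Your proposal is correct and follows essentially the same approach as the paper: both derive the identity by differentiating the multihomogeneity relation, which is precisely Euler's theorem in this setting. The only cosmetic difference is that the paper applies Euler's identity blockwise (obtaining $\nabla_{{\bf x}_j}P_i({\bf x})^\top(\theta_j{\bf x}_j)=b_{ij}\theta_j P_i({\bf x})$ for each $i,j$ and then summing over $j$), whereas you differentiate along a single exponential curve in $\boldsymbol\theta$ to get all blocks at once; the content is the same.
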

	\begin{proof}
		Let ${\bf x} \in \Rsigpp$ and $\boldsymbol{\theta} \in \mathbb{R}_{++}^d$.
		By Euler's theorem, we can obtain that
		\begin{equation*}
			\nabla_{{{\bf x}_j}} P_{i}({\bf x})^\top (\theta_{j}{\bf x}_{j}) = b_{ij} \theta_j P_{i}({\bf x}), \quad \forall i,j\in [d].
		\end{equation*}
		Then, it follows that
		\begin{equation*}
			\textstyle \nabla_{{\bf x}} P_{i}({\bf x})^\top (\theta \otimes {\bf x})
			= \sum_{j=1}^{d} \nabla_{{{\bf x}_j}} P_{i}({\bf x})^\top (\theta_{j}{\bf x}_{j}) = \sum_{j=1}^{d} b_{ij} \theta_j P_{i}({\bf x}), \quad \forall i \in [d],
		\end{equation*}
		which completes the proof.
	\end{proof}
	
	\begin{remark}\label{rm-multihom}
		The mapping ${\bf x} \mapsto {\mathscr A}({\bf x}^{[\bssig]})$ is multihomogeneous with homogeneity matrix ${\bf 1}{\boldsymbol \nu}^{\top}- I$.
	\end{remark}

	\subsection{$({\boldsymbol \sigma}, {\bf  p})$-eigenvalue problem}
	In this subsection, we recall the definitions of $\sigp$-eigenpair, two important classes of structured tensors, and the related Perron-Frobenius theorem introduced by Gautier et al. \cite{GTH19,GTH23}.
	
	\begin{definition}[$({\boldsymbol \sigma}, {\bf  p})$-eigenvalues and -eigenvectors {\cite[Definition~2.2]{GTH19}}]
		Let ${\boldsymbol \sigma} = \{\sigma_{i}\}_{i=1}^{d}$ be a shape partition of ${\cal A}\in\mathbb{R}^{N_1\times\cdots \times N_m}$, and ${\bf  p} = (p_1,\ldots,p_d)^{\top}\in(1,\infty)^d$.
		We call $(\lambda, {\bf x}) \in \mathbb{R}\times \mathbb{R}^{{\boldsymbol \sigma}}$ a $({\boldsymbol \sigma},{\bf  p})$-eigenpair of ${\mathcal A}$ if it satisfies
		\begin{equation}\label{speig}
			{\mathscr A}_{s_i}({\bf x}^{[{\boldsymbol \sigma}]})=\lambda \psi_{p_i}({\bf x}_i) \quad \mbox{and} \quad \|{\bf x}_i\|_{p_i}=1, \quad\forall i\in[d],
		\end{equation}
		where ${\mathscr A}$ and $s_i$'s are defined by \eqref{eq-A-scr-def} and \eqref{eq-nu-s-n-def}, respectively, and
		$$
		\psi_{p_i}({\bf x}_i)\coloneq\left(|x_{i,1}|^{p_i-1}{\rm sign}(x_{i,1}),\ldots,|x_{i,n_i}|^{p_i-1}{\rm sign}(x_{i,n_i})\right);
		$$
		here, ${\rm sign}(t)=t/|t|$ for $t\neq0$ and ${\rm sign}(0)=0$.
		Then, $\lambda$ and ${\bf x}$ are called a $({\boldsymbol \sigma},{\bf  p})$-eigenvalue and a $({\boldsymbol \sigma},{\bf  p})$-eigenvector of ${\mathcal A}$, respectively.
	\end{definition}
	
	\begin{definition}[{${\boldsymbol \sigma}$-nonnegativity and -irreducibility \cite[Definition~2.3]{GTH19}}]\label{struc-t}
		For a nonnegative tensor ${\cal A} \in \mathbb{R}_{+}^{N_1 \times \cdots \times N_m}$ and an associated shape partition ${{\boldsymbol \sigma}}=\left\{\sigma_i\right\}_{i=1}^d$, consider the matrix $M \in \mathbb{R}_{+}^{\left(n_1+\cdots+n_d\right) \times\left(n_1+\cdots+n_d\right)}$ defined as
		$$
		M_{\left(i, t_i\right),\left(j, l_j\right)}=\left.\frac{\partial}{\partial x_{j, l_j}} {\mathscr A}_{s_i, t_i}({\bf x}^{[{\boldsymbol \sigma}]})\right|_{{\bf x}=\mathbf{1}}, \quad \forall i,j \in [d], t_i\in [n_i], l_j\in [n_j],
		$$
		where $\mathbf{1}=(1, \ldots, 1)^{\top}$ is the vector of all ones. We say that ${\cal A}$ is
		\begin{itemize}
			\item ${\boldsymbol \sigma}$-strictly nonnegative if $M$ has at least one nonzero entry per row;
			\item ${\boldsymbol \sigma}$-weakly irreducible if $M$ is irreducible (see \cite[Definition~1.2]{BP94}).
		\end{itemize}
	\end{definition}
	\begin{remark}
		Every ${\boldsymbol \sigma}$-weakly irreducible nonnegative tensor is ${\boldsymbol \sigma}$-strictly nonnegative (see \cite[Theorem~3.1]{GTH19}).
	\end{remark}
		
	Let ${\boldsymbol \sigma} = \{\sigma_{i}\}_{i=1}^{d}$ be a shape partition of nonnegative tensor ${\cal A}\in\mathbb{R}^{N_1\times\cdots \times N_m}_{+}$, and ${\bf  p}=(p_1\ldots,p_d) \in(1,\infty)^d$.
	The $({\boldsymbol \sigma},{\bf  p})$-spectral radius of $\mathcal A$ is given by
	\begin{equation}\label{eq-radius-def}
		r^{({\boldsymbol \sigma},{\bf  p})}({\cal A}):=\sup\{|\lambda|: \lambda \text{ is a }({\boldsymbol \sigma},{\bf  p})\text{-eigenvalue of }{\cal A}\}.
	\end{equation}
	Let ${F}^{({\boldsymbol \sigma},{\bf  p})} = ({F}_1^{({\boldsymbol \sigma},{\bf  p})},\ldots,{F}_d^{({\boldsymbol \sigma},{\bf  p})}) :{\mathbb R}^{{\boldsymbol \sigma}}_+\rightarrow{\mathbb R}^{{\boldsymbol \sigma}}_+$ be defined by
	\begin{equation}\label{eq-Fsp-def}
		{F}_{i}^{({\boldsymbol \sigma},{\bf  p})}({\bf x})=({\mathscr A}_{s_i}({\bf x}^{[{\boldsymbol \sigma}]}))^{[p_{i}^{\prime} -1]} \in \mathbb{R}^{n_i}, \quad \forall i\in [d],\ {\bf x} \in \mathbb{R}^{\bssig}, 
	\end{equation}
	where $p_{i}^{\prime} := \frac{p_i}{p_i -1}$.
	Clearly, ${F}^{({\boldsymbol \sigma},{\bf  p})}$ is multihomogeneous with the homogeneity matrix $A({\boldsymbol \sigma},{\bf  p})\in\mathbb{R}^{d\times d}_+$ given by
	\begin{equation}\label{eq-A-def}
		A \coloneq A({\boldsymbol \sigma},{\bf  p}) \coloneq \text{diag}(p_{1}^{\prime} - 1,\ldots,p_{d}^{\prime}-1)({\bf 1}{\boldsymbol \nu}^{\top}- I),
	\end{equation}
	where we recall that ${\boldsymbol \nu} = \left(|\sigma_1|, \ldots, |\sigma_d|\right)^{\top}$.
	It is straightforward to see that the homogeneity matrix $A$ is nonnegative and irreducible. 
	Thus, one can further obtain a unique positive eigenvector ${\bf b} \in \mathbb{R}^{d}_{++}$ such that
	\begin{equation}\label{hmsp-eig}
		\textstyle A^{\top}{\bf b}=\rho(A){\bf b} \quad \text{and}\quad\sum_{i=1}^db_i=1,
	\end{equation}
	where $\rho(A)$ is the spectral radius of $A$.
	
	We now recall the unifying Perron-Frobenius theorem for the structured tensors given in Definition \ref{struc-t}.
	
	\begin{lemma}[{Perron--Frobenius theorem \cite[Theorem~3.2]{GTH19}}]\label{pf-sp}
		Let ${\boldsymbol \sigma}=\{\sigma_i\}^d_{i=1}$ be a shape partition of ${\cal A}\in\mathbb{R}^{N_1\times\cdots \times N_m}_{+}$, and ${\bf  p}\in(1,\infty)^d$. Furthermore, let $r^{({\boldsymbol \sigma},{\bf  p})}({\cal A})$, ${F}^{({\boldsymbol \sigma},{\bf  p})}$, $A$ and ${\bf b}$ be given by (\ref{eq-radius-def}), (\ref{eq-Fsp-def}), (\ref{eq-A-def}) and (\ref{hmsp-eig}), respectively. Suppose that ${\cal A}$ is ${\boldsymbol \sigma}$-strictly nonnegative and $\rho(A)\leq 1$. Then, the following properties hold:
		\begin{enumerate}[{\rm (i)}]
			\item There exists a $({\boldsymbol \sigma},{\bf  p})$-eigenpair $(\lambda, {\bf u})\in\mathbb{R}_+\times {\mathbb R}^{{\boldsymbol \sigma}}_{+}$ of ${\cal A}$ such that $\lambda=r^{({\boldsymbol \sigma},{\bf  p})}({\cal A})$ and ${\bf u}_i \neq 0$ for all $i\in [d]$. 
			\item Let $\gamma=\frac{\sum_{i=1}^d b_i p_i^\prime}{\sum_{i=1}^d b_i p_i^\prime-1}$; then $\gamma \in(1, \infty)$ and the following Collatz-Wielandt formula holds:
			\begin{equation}
				\label{maximality-1}
				\textstyle \max\limits_{{\bf y} \in \mathcal{S}_{+}^{({{\boldsymbol \sigma}}, {\bf p})}}\underline{\mathrm{cw}}\left({F}^{({{\boldsymbol \sigma}}, {\bf p})}, {\bf y}\right)
				= r^{({\boldsymbol \sigma}, {\bf  p})}({\cal A})
				= \inf\limits_{{\bf x} \in \mathcal{S}_{++}^{({{\boldsymbol \sigma}},{\bf p})}}\overline{\mathrm{cw}}\left({F}^{({{\boldsymbol \sigma}}, {\bf p})}, {\bf x}\right),
			\end{equation}
			where 
			\begin{align}
				& \textstyle \underline{\mathrm{cw}}\left({F}^{({{\boldsymbol \sigma}}, {\bf p})}, {\bf y}\right)=\prod_{i=1}^d\left(\min\limits_{j_i \in\left[n_i\right], y_{i, j_i}>0} \frac{{F}_{i, j_i}^{({{\boldsymbol \sigma}}, {\bf p})}({\bf y})}{y_{i, j_i}}\right)^{(\gamma-1) b_i}, \label{eq-cw-underline}
				\\ &\textstyle \overline{\mathrm{cw}}\left({F}^{({{\boldsymbol \sigma}}, {\bf p})}, {\bf x}\right)=\prod_{i=1}^d\left(\max\limits_{j_i \in\left[n_i\right]} \frac{{F}_{i, j_i}^{({{\boldsymbol \sigma}}, {\bf p})}({\bf x})}{x_{i, j_i}}\right)^{(\gamma-1) b_i}. \notag
			\end{align}
			\item If either $\rho(A)<1$ or ${\cal A}$ is ${\boldsymbol \sigma}$-weakly irreducible, then the $({\boldsymbol \sigma},{\bf  p})$-eigenvector ${\bf u}$ in \rm{(i)} can be chosen to be strictly positive, i.e., ${\bf u}\in {\mathbb R}^{{\boldsymbol \sigma}}_{++}$. Moreover, ${\bf u}$ is the unique positive $({\boldsymbol \sigma},{\bf  p})$-eigenvector of ${\cal A}$. 
		\end{enumerate}
	\end{lemma}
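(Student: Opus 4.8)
The plan is to derive Lemma~\ref{pf-sp} from the nonlinear Perron--Frobenius theory for multihomogeneous order-preserving maps, in the spirit of \cite{GTH19,GTH19Mult}. The first step is the algebraic reduction: since $(p_i-1)(p_i'-1)=1$, a pair $(\lambda,{\bf u})\in\Rp\times\Rsigp$ with $\|{\bf u}_i\|_{p_i}=1$ for every $i$ is a $\sigp$-eigenpair of $\mathcal A$ if and only if $F_i^{\sigp}({\bf u})=\lambda^{p_i'-1}\,{\bf u}_i$ for all $i\in[d]$. Moreover $F^{\sigp}$ is order-preserving on $\Rsigp$ (each ${\mathscr A}_{s_i,j_i}$ is a polynomial with nonnegative coefficients and $t\mapsto t^{p_i'-1}$ is increasing) and, by Remark~\ref{rm-multihom} together with the degree-$(p_i'-1)$ power, multihomogeneous with the matrix $A$ of \eqref{eq-A-def}. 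Thus the whole statement concerns the structured cone spectral radius of $F^{\sigp}$.

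For part~(i) I would argue by perturbation and compactness. Taking $\mathcal E$ to be the all-ones tensor, for $\varepsilon>0$ the map $F^{\sigp}$ of $\mathcal A+\varepsilon\mathcal E$ sends $\mathcal S_+^{\sigp}$ into $\Rsigpp$, so ${\bf x}\mapsto\big(\|F_i^{\sigp}({\bf x})\|_{p_i}^{-1}F_i^{\sigp}({\bf x})\big)_{i\in[d]}$ is a continuous self-map of the compact set $\mathcal S_+^{\sigp}$; a Brouwer-type fixed point gives a positive eigenpair $(\lambda_\varepsilon,{\bf u}_\varepsilon)$, which for this strictly positive map is unique and satisfies $\lambda_\varepsilon=r^{\sigp}(\mathcal A+\varepsilon\mathcal E)$. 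Letting $\varepsilon\downarrow0$ along a convergent subsequence yields a $\sigp$-eigenpair $(\lambda_0,{\bf u}_0)\in\Rp\times\Rsigp$ of $\mathcal A$; $\bssig$-strict nonnegativity (each row of $M$ has a nonzero entry) keeps every block ${\bf u}_{0,i}$ from vanishing in the limit, and monotonicity of $r^{\sigp}$ in $\mathcal A$ --- read off from the upper Collatz--Wielandt bound below --- forces $\lambda_0=r^{\sigp}(\mathcal A)$.

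For part~(ii) the exponents are dictated by the eigenpair identity: at a positive $\sigp$-eigenpair every ratio $F_{i,j_i}^{\sigp}({\bf u})/u_{i,j_i}$ equals $\lambda^{p_i'-1}$, so $\underline{\mathrm{cw}}$ and $\overline{\mathrm{cw}}$ both equal $\lambda^{(\gamma-1)\sum_i b_i(p_i'-1)}$, and $\gamma=\frac{\sum_i b_i p_i'}{\sum_i b_i p_i'-1}$ is precisely the value that makes this $\lambda$. I would then prove the two one-sided bounds. Given ${\bf y}\in\Rsigp$ with ${\bf y}_i\neq0$ and $c_i:=\min_{j_i:\,y_{i,j_i}>0}F_{i,j_i}^{\sigp}({\bf y})/y_{i,j_i}$, order-preservation gives $F_i^{\sigp}({\bf y})\ge c_i{\bf y}_i$; iterating $F^{\sigp}$, propagating the block growth factors through powers of $A$, and using that $A^\top{\bf b}=\rho(A){\bf b}$ with $\rho(A)\le1$ makes $\sum_i b_i\log(\cdot)$ the $A$-balanced functional, one obtains $\prod_i c_i^{(\gamma-1)b_i}\le r^{\sigp}(\mathcal A)$; symmetrically, for ${\bf x}\in\Rsigpp$ and $C_i:=\max_{j_i}F_{i,j_i}^{\sigp}({\bf x})/x_{i,j_i}$ one gets $r^{\sigp}(\mathcal A)\le\prod_i C_i^{(\gamma-1)b_i}$. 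The normalized eigenvector from~(i) attains the lower bound, giving the left equality; the right equality follows by evaluating $\overline{\mathrm{cw}}$ at that eigenvector when it is strictly positive, and otherwise by approximating it with interior points and again invoking monotonicity in $\mathcal A$.

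For part~(iii), if $\rho(A)<1$ then $A$ strictly contracts the positive cone of $\R^d$ in the part metric, which upgrades $F^{\sigp}$ to a strict contraction of $\Rsigpp$ in the corresponding weighted Thompson metric, so it has a unique interior fixed point up to normalization, whence ${\bf u}\in\Rsigpp$ and is the unique positive $\sigp$-eigenvector. If instead $\mathcal A$ is $\bssig$-weakly irreducible ($M$ irreducible), then $F^{\sigp}$ is an irreducible order-preserving multihomogeneous map in the sense of \cite{GTH19Mult}, which admits no eigenvector on the relative boundary of $\Rsigp$ with all blocks nonzero; hence the eigenvector from~(i) lies in $\Rsigpp$, and uniqueness follows from the Birkhoff--Hopf theorem, as $F^{\sigp}$ is nonexpansive in the Hilbert projective metric and, by weak irreducibility, strictly contracting on $\Rsigpp$, so it has at most one normalized positive fixed point. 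The hard part will be part~(ii): making the iteration rigorous so that the block-wise ratios recombine precisely into the weighted geometric mean with exponents $(\gamma-1)b_i$ and recover $r^{\sigp}(\mathcal A)$ itself rather than a power of it, and handling the infimum direction when the eigenvector from~(i) is only nonnegative; the normalization in the perturbation step of~(i) also requires care so as not to degenerate.
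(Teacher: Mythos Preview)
The paper does not prove Lemma~\ref{pf-sp}: it is stated as a direct citation of \cite[Theorem~3.2]{GTH19}, so there is no ``paper's own proof'' to compare against. Your proposal is therefore an attempt to reconstruct the proof of the cited result itself, and your outline does follow the strategy of \cite{GTH19,GTH19Mult} (reduce to the order-preserving multihomogeneous map $F^{\sigp}$, obtain existence by perturbation/compactness, derive the Collatz--Wielandt bounds from monotone iteration weighted by the Perron eigenvector ${\bf b}$ of $A^\top$, and deduce uniqueness from Thompson/Hilbert contraction).

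That said, a few steps in your sketch are genuinely incomplete rather than just compressed. In part~(i), asserting that the perturbed strictly positive map has a \emph{unique} fixed point with $\lambda_\varepsilon=r^{\sigp}(\mathcal A+\varepsilon\mathcal E)$ already presupposes the Perron--Frobenius conclusion you are trying to prove; you need either a direct uniqueness argument for strictly positive multihomogeneous maps with $\rho(A)\le1$, or to bypass uniqueness and instead show directly that any limit eigenvalue dominates all others. Also, ``$\bssig$-strict nonnegativity keeps every block ${\bf u}_{0,i}$ from vanishing'' is not automatic from the row condition on $M$ alone and is where \cite{GTH19} does real work. In part~(ii), the phrase ``propagating the block growth factors through powers of $A$'' hides the central calculation: one needs that iterating $F^{\sigp}$ multiplies the log-ratio vector by $A$, and then that $\rho(A)\le1$ together with $A^\top{\bf b}=\rho(A){\bf b}$ makes the ${\bf b}$-weighted log-sum nonincreasing (resp.\ nondecreasing) along the iteration, converging to the claimed power of the spectral radius; you correctly identify this as the hard part, but the sketch as written would not pin down the exponent without that explicit computation. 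Your verification that $(\gamma-1)\sum_i b_i(p_i'-1)=1$ is correct and is exactly the normalization that makes $\overline{\mathrm{cw}}$ and $\underline{\mathrm{cw}}$ return $\lambda$ rather than a power of it.
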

	
	We end this section with several fundamental properties of $Z$- and $M$-matrices.
	\begin{lemma}[\cite{BP94}]\label{lm-ZM}
		Let $M\in\mathbb{R}^{n\times n}$ be a $Z$-matrix. Then each of the following conditions is equivalent to the statement ``$M$ is a nonsingular $M$-matrix'':
		\begin{enumerate}[{\rm (i)}]
			\item   $M^{-1} \geq 0$;
			\item   $M {\bf v}\in \mathbb{R}^{n}_{++}$ for some ${\bf v}\in \mathbb{R}^{n}_{++}$.
		\end{enumerate}
	\end{lemma}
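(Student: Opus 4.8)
The plan is to reduce the statement to the classical Neumann-series and Collatz--Wielandt arguments for nonnegative matrices. Since $M$ is a $Z$-matrix, I would first fix $s\in\mathbb{R}$ with $s>\max_{i\in[n]} M_{ii}$ and write $M=sI-B$ with $B:=sI-M$. Then $B\ge 0$: its off-diagonal entries are $-M_{ij}\ge 0$ (because $M$ is a $Z$-matrix) and its diagonal entries are $s-M_{ii}>0$. With this normalization, ``$M$ is a nonsingular $M$-matrix'' means precisely $\rho(B)<s$, equivalently $\rho(B/s)<1$. It then suffices to prove the cycle ``$M$ is a nonsingular $M$-matrix $\Rightarrow$ (i) $\Rightarrow$ (ii) $\Rightarrow$ $M$ is a nonsingular $M$-matrix,'' which yields both equivalences at once.

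For the first implication, assuming $\rho(B/s)<1$, the Neumann series converges entrywise and gives $M^{-1}=s^{-1}(I-B/s)^{-1}=s^{-1}\sum_{k\ge 0}(B/s)^k$, and each summand is nonnegative since $B\ge 0$; hence $M^{-1}\ge 0$, which is (i). For ``(i) $\Rightarrow$ (ii),'' I would take ${\bf v}:=M^{-1}{\bf 1}$, so that $M{\bf v}={\bf 1}\in\mathbb{R}^n_{++}$; it remains to show ${\bf v}\in\mathbb{R}^n_{++}$, and this holds because a nonsingular matrix has no zero row, so each row of $M^{-1}\ge 0$ is nonnegative and nonzero, forcing $v_i=\sum_j (M^{-1})_{ij}>0$. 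For the last implication, given ${\bf v}\in\mathbb{R}^n_{++}$ with $M{\bf v}\in\mathbb{R}^n_{++}$, I rewrite $M{\bf v}>0$ as $B{\bf v}<s{\bf v}$ componentwise and set $\theta:=\max_{i\in[n]}(B{\bf v})_i/v_i$, so $\theta<s$; the Collatz--Wielandt bound for the nonnegative matrix $B$ then yields $\rho(B)\le\theta<s$, i.e. $M$ is a nonsingular $M$-matrix.

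I expect the only step requiring care is this last Collatz--Wielandt bound $\rho(B)\le\max_{i}(B{\bf v})_i/v_i$. I would either invoke it as the standard matrix Perron--Frobenius fact (as in \cite{BP94}) or give the short self-contained argument: from $0\le B{\bf v}\le\theta{\bf v}$ entrywise and $B\ge 0$, an induction gives $0\le B^k{\bf v}\le\theta^k{\bf v}$ for all $k$, hence $B^k$ is bounded (say in the ${\bf v}$-weighted sup-norm) by $\theta^k$, so $\rho(B)=\lim_k\|B^k\|^{1/k}\le\theta$. The remaining verifications---the sign bookkeeping for $B\ge 0$, convergence of the Neumann series under $\rho(B/s)<1$, and the no-zero-row observation---are routine.
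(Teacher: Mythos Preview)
Your argument is correct. The cycle you set up is the standard route: the Neumann-series step for ``nonsingular $M$-matrix $\Rightarrow$ (i)'' is clean, the row-sum observation for ``(i) $\Rightarrow$ (ii)'' is fine, and your self-contained justification of the Collatz--Wielandt bound via $B^k{\bf v}\le\theta^k{\bf v}$ closes the loop without appealing to irreducibility or to the full Perron--Frobenius theorem.

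As for comparison: the paper does not supply its own proof of this lemma. It is stated as a quotation from \cite{BP94} (Berman--Plemmons) and used as a black box throughout sections~\ref{sect-prop-eig} and~\ref{sec-LSNNM}. So there is nothing in the paper to compare your argument against; your write-up simply fills in what the paper leaves to the reference, and does so along exactly the lines one finds in that reference.
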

	
	\begin{lemma}[{\cite{BP94}}]\label{lm-irr-nonsing-M}
		Let $M\in\mathbb{R}^{n\times n}$ be an irreducible $Z$-matrix. 
		Then each of the following conditions is equivalent to the statement ``$M$ is a nonsingular $M$-matrix'':
		\begin{enumerate}[{\rm (i)}]
			\item $M^{-1}>0$.
			\item  $M {\bf v}\in \mathbb{R}^n_+\backslash \{0\}$ for some ${\bf v}\in \mathbb{R}^{n}_{++}$.
		\end{enumerate}
	\end{lemma}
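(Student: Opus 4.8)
The plan is to reduce everything to the equivalences already recorded in Lemma~\ref{lm-ZM} and to promote the two ``semipositivity'' conditions there from their weak to their strict forms, using irreducibility. Fix $s>0$ large enough that $B \coloneq sI - M$ is nonnegative (any $s \ge \max_i M_{ii}$ works). Since $M$ is a $Z$-matrix, $B \ge 0$ has exactly the off-diagonal zero pattern of $M$, so $M$ is irreducible iff $B$ is irreducible; moreover $M = sI - B$ is a nonsingular $M$-matrix precisely when $\rho(B) < s$. It then suffices to establish the two chains $(M\text{ nonsingular }M\text{-matrix}) \Rightarrow \text{(i)} \Rightarrow (M\text{ nonsingular }M\text{-matrix})$ and $(M\text{ nonsingular }M\text{-matrix}) \Rightarrow \text{(ii)} \Rightarrow (M\text{ nonsingular }M\text{-matrix})$.

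For the first chain: Lemma~\ref{lm-ZM}(i) already gives $M^{-1}\ge 0$ when $M$ is a nonsingular $M$-matrix; substituting $M = sI-B$ into $MM^{-1}=I$ and iterating the identity $M^{-1} = s^{-1}(I + BM^{-1})$ yields $M^{-1} = \sum_{k=0}^{n-1} s^{-(k+1)}B^k + s^{-n}B^n M^{-1} \ge \sum_{k=0}^{n-1} s^{-(k+1)}B^k$. Since $B$ is irreducible, for every $(i,j)$ there is a walk of length $\le n-1$ in the digraph of $B$ (length $0$ if $i=j$), so $\bigl(\sum_{k=0}^{n-1}B^k\bigr)_{ij}>0$ for all $i,j$; hence $M^{-1}>0$, which is (i). Conversely, (i) gives $M^{-1}\ge 0$ trivially, so Lemma~\ref{lm-ZM}(i) returns that $M$ is a nonsingular $M$-matrix.

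For the second chain: if $M$ is a nonsingular $M$-matrix, Lemma~\ref{lm-ZM}(ii) supplies $v\in\mathbb{R}^n_{++}$ with $Mv\in\mathbb{R}^n_{++}\subseteq\mathbb{R}^n_+\setminus\{0\}$, which is (ii). Conversely, assume $Mv\in\mathbb{R}^n_+\setminus\{0\}$ for some $v>0$, i.e.\ $sv-Bv\ge 0$ and $sv-Bv\ne 0$. Let $y\in\mathbb{R}^n_{++}$ be a left Perron eigenvector of the irreducible nonnegative matrix $B$, so $y^\top B=\rho(B)y^\top$. Pairing $sv-Bv$ with $y$ gives $0<y^\top(sv-Bv)=(s-\rho(B))\,y^\top v$, and $y^\top v>0$ forces $\rho(B)<s$; that is, $M$ is a nonsingular $M$-matrix.

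I expect no step to be genuinely deep once Lemma~\ref{lm-ZM} is in hand; the two spots where irreducibility is actually used are the ones to handle carefully: the combinatorial fact that $\sum_{k=0}^{n-1}B^k$ is entrywise positive for an irreducible $B\ge 0$, and the existence of a strictly positive left Perron eigenvector of $B$ — both standard consequences of Perron–Frobenius theory for irreducible nonnegative matrices (cf.\ \cite{BP94}). The one piece of bookkeeping to get exactly right is the strictness in the final implication: the hypothesis only says $Mv$ is nonnegative \emph{and nonzero}, and it is precisely the pairing with the positive vector $y$ that converts this into the strict inequality $\rho(B)<s$.
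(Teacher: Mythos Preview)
Your proof is correct. The paper does not supply its own proof of this lemma; it is stated as a known result with a citation to \cite{BP94}, so there is nothing to compare against beyond confirming that your argument---reducing to Lemma~\ref{lm-ZM} and using irreducibility of $B=sI-M$ once to make $\sum_{k=0}^{n-1}B^k$ strictly positive and once to obtain a strictly positive left Perron vector---is sound.
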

	
	\begin{lemma}[\cite{BP94}]
		\label{lm-irr-singular-M}
		Let $M\in\mathbb{R}^{n\times n}$ be an irreducible $Z$-matrix. 
		Then, $M$ is a singular $M$-matrix if and only if there exists a unique vector ${\bf y}\in \mathbb{R}^{n}_{++}$ such that $M{\bf y} = 0$ and ${\bf 1}^{\top} {\bf y} = 1$.
		Moreover, if $M$ is a singular irreducible $M$-matrix and $M{\bf y}\ge 0$ for some ${\bf y} \in \mathbb{R}^{n}$ with ${\bf 1}^{\top} {\bf y} = 1$, then $M{\bf y} = 0$ and ${\bf y}>0$.
	\end{lemma}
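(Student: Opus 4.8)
The plan is to reduce the statement to the classical Perron--Frobenius theorem for irreducible nonnegative matrices. Write $M = sI - B$ with $B := sI - M$, choosing $s$ large enough that $B \ge 0$; since $M$ is an irreducible $Z$-matrix and $B$ has the same off-diagonal zero pattern, $B$ is irreducible. By definition, $M$ is an $M$-matrix precisely when $s \ge \rho(B)$, and a \emph{singular} $M$-matrix precisely when $s = \rho(B)$ (this is independent of the chosen $s$, since $\rho(sI - M)$ increases by exactly the same amount as $s$). The Perron--Frobenius theorem then supplies: $\rho(B)$ is an algebraically, hence geometrically, simple eigenvalue of both $B$ and $B^{\top}$, with positive eigenvectors ${\bf v}, {\bf w} \in \mathbb{R}^{n}_{++}$ unique up to scaling, and it is the only eigenvalue of $B$ admitting a semipositive eigenvector.

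I would then establish the equivalence in both directions. For ``$\Rightarrow$'': if $M$ is a singular $M$-matrix then $s = \rho(B)$, so $M{\bf y} = 0$ is equivalent to $B{\bf y} = \rho(B){\bf y}$; geometric simplicity forces ${\bf y} \in \mathrm{span}\{{\bf v}\}$, and imposing ${\bf 1}^{\top}{\bf y} = 1$ (note ${\bf 1}^{\top}{\bf v} > 0$) singles out the unique vector ${\bf y} = {\bf v}/({\bf 1}^{\top}{\bf v}) \in \mathbb{R}^{n}_{++}$. For ``$\Leftarrow$'': if some ${\bf y} \in \mathbb{R}^{n}_{++}$ satisfies $M{\bf y} = 0$, then $M$ is singular and $B{\bf y} = s{\bf y}$, so pairing with the left Perron vector gives $s\,{\bf w}^{\top}{\bf y} = {\bf w}^{\top}B{\bf y} = \rho(B)\,{\bf w}^{\top}{\bf y}$ with ${\bf w}^{\top}{\bf y} > 0$, whence $s = \rho(B)$ and $M$ is a singular $M$-matrix; uniqueness of the normalized null vector is again simplicity. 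For the ``moreover'' part, let $M$ be a singular irreducible $M$-matrix, so $M = \rho(B)I - B$ and $M^{\top}{\bf w} = 0$ with ${\bf w} > 0$; if $M{\bf y} \ge 0$ then $0 = {\bf w}^{\top}(M{\bf y})$ is a sum of nonnegative terms with strictly positive coefficients, forcing $M{\bf y} = 0$, hence ${\bf y} = c{\bf v}$, and ${\bf 1}^{\top}{\bf y} = 1$ together with ${\bf 1}^{\top}{\bf v} > 0$ gives $c > 0$, i.e.\ ${\bf y} > 0$.

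The only substantive ingredient --- hence the crux --- is the strong form of the Perron--Frobenius theorem for irreducible nonnegative matrices: simplicity of the Perron root and the fact that it alone carries a semipositive eigenvector; everything else is bookkeeping with $M = sI - B$. If one prefers to stay inside the $Z$/$M$-matrix toolkit already developed, the ``$\Leftarrow$'' direction can avoid Perron--Frobenius: for each $\varepsilon > 0$, $M + \varepsilon I$ is an irreducible $Z$-matrix with $(M+\varepsilon I){\bf y} = \varepsilon{\bf y} \in \mathbb{R}^{n}_{++}$, so Lemma~\ref{lm-ZM}(ii) makes it a nonsingular $M$-matrix; writing $M + \varepsilon I = (s+\varepsilon)I - B$ this yields $s + \varepsilon > \rho(B)$ for all $\varepsilon > 0$, hence $s \ge \rho(B)$, while $B{\bf y} = s{\bf y}$ with $s \ge 0$ forces $s \le \rho(B)$, so $s = \rho(B)$ and $M$ is a singular $M$-matrix. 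The uniqueness of the null vector and the ``moreover'' part, however, still hinge on the positivity and one-dimensionality of the relevant null spaces, a Perron-type fact that appears unavoidable.
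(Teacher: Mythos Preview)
Your proof is correct. The paper does not supply its own proof of this lemma; it is stated with a citation to Berman and Plemmons \cite{BP94} and used as a black box. Your reduction to the Perron--Frobenius theorem via $M = sI - B$ is precisely the standard argument one finds in that reference, so there is nothing to compare.
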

	
	\section{Collatz--Wielandt formula and Convex reformulation}\label{sect-prop-eig}
	The extended Collatz--Wielandt formulas
	developed in \cite{CQZ10,CPZ08,FGH13,GTH19,LQ13,YY11} provide an important variational characterization of the spectral radius (or the largest singular value) of nonnegative tensors.
	A particularly important consequence is that, 
	for a square (or rectangular) 
	 nonnegative irreducible tensor, the problem of finding the spectral radius (or largest singular value) 
	 can be reformulated as a convex optimization problem, as demonstrated by Yang and Yang \cite{YY11}. 
	
	However, for the $(\boldsymbol{\sigma},\mathbf{p})$-spectral problem of nonnegative tensors, the sphere constraints of Collatz--Wielandt formula in Lemma~\ref{pf-sp}(ii) do not appear to be relaxed straightforwardly. This difficulty prevents
	a direct extension of the convex optimization framework in \cite{YY11} to the
	$(\boldsymbol{\sigma},\mathbf{p})$-spectral problem.
						
	The main contribution of this section is to 
	overcome this obstacle by
	establishing a new min-max Collatz--Wielandt formula for the $(\bssig, {\bf p})$-spectral radius of nonnegative tensors, stated in Theorem~\ref{thr-minmax} below.
	Unlike the previous works, our derivation is carried out
	from convex optimization perspective.
	In the following subsections, we provide an alternative approach showing
	that the $(\bssig, {\bf p})$-spectral problem can be reformulated into two convex optimization 
	problems, corresponding respectively to the min-max Collatz--Wielandt formulas \eqref{maximality-3} and \eqref{maximality} below.	
	The specific results are presented in Theorems~\ref{th-str-nonneg} and~\ref{thr--weak-irr}.
	\begin{theorem}[Min-max Collatz--Wielandt formulas]\label{thr-minmax}
		Let ${\boldsymbol \sigma}=\{\sigma_i\}^d_{i=1}$ be a shape partition of ${\mathcal A}\in\mathbb{R}^{N_1\times\cdots \times N_m}_{+}$ and ${\bf  p}=(p_1,\ldots,p_d)\in(1,\infty)^d$.
		Then the following statements hold:
		\begin{enumerate}[(i)]
			\item 
			If ${\mathcal A}$ is ${\boldsymbol \sigma}$-strictly nonnegative and $\sum_{i=1}^{d}|\sigma_i|/p_i < 1$, then
			\begin{equation}
				\label{maximality-3}
				r^{({\boldsymbol \sigma}, {\bf  p})}({\mathcal A})=\min_{{\bf x} \in {\mathbb R}_{++}^{{\boldsymbol \sigma}}}
				\left\{ 
				\max_{i\in[d], j_i\in [n_i]}
				\frac{{\mathscr A}_{s_i, j_i}\left({\bf x}^{[{{\boldsymbol \sigma}}]} \right)}{x_{i,j_i}^{p_i-1}}
				: \|{\bf x}_{1}\|_{p_1} \cdots \|{\bf x}_{d}\|_{p_d} \le 1
				\right\}.
			\end{equation}
			\item 
			If ${\mathcal A}$ is ${\boldsymbol \sigma}$-weakly irreducible and $\sum_{i=1}^{d}|\sigma_i|/p_i = 1$, then
			\begin{equation}
				\label{maximality}
				r^{({\boldsymbol \sigma}, {\bf  p})}({\mathcal A})=\min_{{\bf x} \in {\mathbb R}_{++}^{{\boldsymbol \sigma}}}
				\left\{
				\max_{i\in[d], j_i\in [n_i]}
				\frac{{\mathscr A}_{s_i, j_i}\left({\bf x}^{[{{\boldsymbol \sigma}}]} \right)}{x_{i,j_i}^{p_i-1}}
				\right\}.
			\end{equation}
		\end{enumerate}
	\end{theorem}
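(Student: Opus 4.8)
The plan is to prove each identity as two inequalities: the right‑hand side of \eqref{maximality-3} (resp.\ \eqref{maximality}) is $\le r^{({\boldsymbol \sigma},{\bf p})}({\mathcal A})$ because a Perron eigenvector is feasible and attains the value $r^{({\boldsymbol \sigma},{\bf p})}({\mathcal A})$, while it is $\ge r^{({\boldsymbol \sigma},{\bf p})}({\mathcal A})$ by a Collatz--Wielandt‑type lower bound valid at every feasible point. A preliminary reduction translates the hypotheses on ${\bf p}$ into the hypothesis $\rho(A)\le 1$ of Lemma~\ref{pf-sp}: writing out the Perron eigenproblem of $A=A({\boldsymbol \sigma},{\bf p})$ in \eqref{eq-A-def} yields the characteristic equation $\sum_{i=1}^d |\sigma_i|/\big((p_i-1)\lambda+1\big)=1$ for its spectral radius, whose left‑hand side is strictly decreasing in $\lambda>0$ and equals $\sum_i|\sigma_i|/p_i$ at $\lambda=1$; hence $\rho(A)<1\iff\sum_i|\sigma_i|/p_i<1$ and $\rho(A)=1\iff\sum_i|\sigma_i|/p_i=1$. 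So in case~(i) we are in the regime $\rho(A)<1$ and in case~(ii) in the regime $\rho(A)=1$ together with ${\boldsymbol \sigma}$‑weak irreducibility; in either case Lemma~\ref{pf-sp}(i),(iii) supplies a unique positive $({\boldsymbol \sigma},{\bf p})$‑eigenpair $(r,{\bf u})$ with ${\bf u}\in{\mathcal S}_{++}^{({\boldsymbol \sigma},{\bf p})}$ and $r=r^{({\boldsymbol \sigma},{\bf p})}({\mathcal A})$, and ${\boldsymbol \sigma}$‑strict nonnegativity makes each row ratio $M_i({\bf x}):=\max_{j_i\in[n_i]}{\mathscr A}_{s_i,j_i}({\bf x}^{[{\boldsymbol \sigma}]})/x_{i,j_i}^{p_i-1}$ finite and strictly positive on $\Rsigpp$.

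For the direction ``$\le$'', note that since ${\bf u}>0$ the eigenvalue system \eqref{speig} reads ${\mathscr A}_{s_i,j_i}({\bf u}^{[{\boldsymbol \sigma}]})=r\,u_{i,j_i}^{p_i-1}$ for all $i,j_i$, so $\max_{i,j_i}{\mathscr A}_{s_i,j_i}({\bf u}^{[{\boldsymbol \sigma}]})/u_{i,j_i}^{p_i-1}=r$; moreover $\|{\bf u}_1\|_{p_1}\cdots\|{\bf u}_d\|_{p_d}=1\le1$, so ${\bf u}$ is feasible for \eqref{maximality-3} and hence also for \eqref{maximality}. Thus in both cases the right‑hand side is $\le r^{({\boldsymbol \sigma},{\bf p})}({\mathcal A})$, with the minimum attained at ${\bf u}$.

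The substantive direction is to show $\max_iM_i({\bf x})\ge r$ for every feasible ${\bf x}\in\Rsigpp$, and I would route this through Lemma~\ref{pf-sp}(ii). From \eqref{eq-Fsp-def} and $p_i'-1=1/(p_i-1)$ one gets $\max_{j_i}F^{({\boldsymbol \sigma},{\bf p})}_{i,j_i}({\bf x})/x_{i,j_i}=M_i({\bf x})^{p_i'-1}$, so
\[
\overline{\mathrm{cw}}\big(F^{({\boldsymbol \sigma},{\bf p})},{\bf x}\big)=\prod_{i=1}^{d}M_i({\bf x})^{w_i},\qquad w_i:=(\gamma-1)b_i(p_i'-1)=\frac{|\sigma_i|}{(p_i-1)\rho(A)+1},
\]
with $\sum_iw_i=1$ (from $A^{\top}{\bf b}=\rho(A){\bf b}$, $\sum_ib_i=1$ and $\gamma-1=(\sum_ib_ip_i'-1)^{-1}$). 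Also Remark~\ref{rm-multihom} gives $M_i({\boldsymbol \theta}\otimes{\bf x})=\big(\prod_j\theta_j^{|\sigma_j|}\big)\theta_i^{-p_i}M_i({\bf x})$, so writing a general ${\bf x}\in\Rsigpp$ as ${\boldsymbol \theta}\otimes{\bf y}$ with $\theta_i=\|{\bf x}_i\|_{p_i}$ and ${\bf y}\in{\mathcal S}_{++}^{({\boldsymbol \sigma},{\bf p})}$ turns the objective into $\max_i\big(\prod_j\theta_j^{|\sigma_j|}\big)\theta_i^{-p_i}M_i({\bf y})$. In case~(ii), $\rho(A)=1$ forces $w_i=|\sigma_i|/p_i$, so $\prod_iM_i(\cdot)^{w_i}$ is scale‑invariant and coincides on the positive sphere with $\overline{\mathrm{cw}}(F^{({\boldsymbol \sigma},{\bf p})},\cdot)$; since a weighted geometric mean is $\le$ the maximum, with equality after a balancing rescaling ${\boldsymbol \theta}$ that equalizes the $M_i$, the unconstrained minimum of $\max_iM_i$ over $\Rsigpp$ equals $\inf_{{\mathcal S}_{++}^{({\boldsymbol \sigma},{\bf p})}}\overline{\mathrm{cw}}(F^{({\boldsymbol \sigma},{\bf p})},\cdot)=r$ by Lemma~\ref{pf-sp}(ii), which is exactly \eqref{maximality}.

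The hard part is case~(i). Now $\rho(A)<1$, the weights $w_i$ differ from $|\sigma_i|/p_i$, and $\max_iM_i$ is in fact unbounded below on $\Rsigpp$ — it is only the constraint $\prod_i\|{\bf x}_i\|_{p_i}\le1$ that keeps the infimum positive — so the scale‑invariance shortcut breaks. Two routes look viable. (a) For fixed ${\bf y}\in{\mathcal S}_{++}^{({\boldsymbol \sigma},{\bf p})}$, minimize $\max_i\big(\prod_j\theta_j^{|\sigma_j|}\big)\theta_i^{-p_i}M_i({\bf y})$ over $\{{\boldsymbol \theta}>0:\prod_i\theta_i\le1\}$; in the variables $\phi_i=\log\theta_i$ this is a linear program whose dual has value $\prod_iM_i({\bf y})^{\xi_i}$ with $\xi_i=(|\sigma_i|+\eta)/p_i$ and $\eta=\big(1-\sum_j|\sigma_j|/p_j\big)\big/\sum_jp_j^{-1}\ge0$, after which the delicate step is to prove $\prod_iM_i({\bf y})^{\xi_i}\ge r$ on the positive sphere despite ${\boldsymbol \xi}\neq{\bf w}$ — I would attempt this by a Hölder/convexity comparison tying both weighted means to the eigenpair, or by identifying $\prod_iM_i({\bf y})^{\xi_i}$ with the Collatz--Wielandt functional of an auxiliary $({\boldsymbol \sigma},{\bf p})$‑problem obtained by adjoining a one‑dimensional block so as to restore $\sum_i|\sigma_i|/p_i=1$ and fall back on case~(ii). (b) Alternatively, cast \eqref{maximality-3} directly as a geometric program in $(\log{\bf x},\log\lambda)$ — the constraints ${\mathscr A}_{s_i,j_i}({\bf x}^{[{\boldsymbol \sigma}]})\le\lambda x_{i,j_i}^{p_i-1}$ and $\prod_i\|{\bf x}_i\|_{p_i}^{p_i}\le1$ are posynomial, hence log‑convex — invoke strong duality, and show that its KKT system at any minimizer reproduces the $({\boldsymbol \sigma},{\bf p})$‑eigenvalue equations with a strictly positive eigenvector, so that uniqueness in Lemma~\ref{pf-sp}(iii) pins the optimal value to $r$; this route covers case~(ii) uniformly as well. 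In either approach the crux is verifying that the relevant multipliers are strictly positive — equivalently, that the norm constraint is active at the optimum and each row ratio $M_i$ is genuinely attained — and this is exactly where ${\boldsymbol \sigma}$‑strict nonnegativity enters.
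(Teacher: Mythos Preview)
Your proposal is essentially correct, and your route~(b) for case~(i) is exactly what the paper does: both cases are handled uniformly by reformulating the min--max problem as a convex program (a geometric program in $(\log{\bf x},\log\lambda)$), writing out the KKT system, and showing that the Lagrange multipliers are strictly positive so that the complementarity conditions collapse to the $(\boldsymbol{\sigma},{\bf p})$-eigenvalue equations; uniqueness from Lemma~\ref{pf-sp}(iii) then pins the optimal value to $r^{(\boldsymbol{\sigma},{\bf p})}({\mathcal A})$. The positivity of the multipliers is obtained in the paper from the $M$-matrix structure of $-DF({\bf y})$ (Lemma~\ref{lm-F}): under Assumption~\ref{ass-str-nonneg} it is a nonsingular $M$-matrix, and one computes directly that the multiplier $\delta$ on the norm constraint equals $(1-\sum_i|\sigma_i|/p_i)/\sum_ip_i^{-1}>0$, forcing the constraint to be active. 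Your route~(a) is not pursued in the paper; the flagged gap (showing $\prod_iM_i^{\xi_i}\ge r$ for the shifted weights $\xi_i$) is real, and the auxiliary-block idea is speculative.

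Your case~(ii) argument, on the other hand, is more elementary than the paper's and is worth keeping. The chain
\[
\max_i M_i({\bf x})\;\ge\;\prod_i M_i({\bf x})^{w_i}\;=\;\prod_i M_i({\bf y})^{w_i}\;=\;\overline{\mathrm{cw}}\bigl(F^{(\boldsymbol{\sigma},{\bf p})},{\bf y}\bigr)\;\ge\;r
\]
(with ${\bf y}$ the sphere-normalization of ${\bf x}$, the middle equality from your scale-invariance computation when $w_i=|\sigma_i|/p_i$, and the last inequality from Lemma~\ref{pf-sp}(ii)) already gives the hard direction without any KKT analysis; the ``balancing rescaling'' remark is unnecessary for this. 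The paper instead re-derives this via the KKT system of \eqref{opt-conv-weak-irr}, which costs more but buys the explicit description of the full solution set (Theorem~\ref{thr--weak-irr}(i)--(ii)), not just the optimal value. Your direct argument does not extend to case~(i), since $\prod_i M_i^{w_i}$ loses scale-invariance when $\rho(A)<1$; this is why the KKT route is needed there.
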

	\begin{proof}
		This follows directly from Theorems~\ref{th-str-nonneg} and \ref{thr--weak-irr}, which are presented later in sections~\ref{subsec-minmax-str-nonneg} and \ref{subsec-minmax-weak-irr}, respectively.
	\end{proof}
	We highlight several advantages of Theorem~\ref{thr-minmax} in comparison with Lemma~\ref{pf-sp}(ii):
	\begin{enumerate}
		[{\rm (i)}]
		\item Theorem~\ref{thr-minmax} 
		offers a more intuitive formulation,
		as it avoids the auxiliary constructs
		 $F^{(\bssig, {\bf p})}$,
		 $A$ and ${\bf b}$,
		 defined by \eqref{eq-Fsp-def}, \eqref{eq-A-def} and \eqref{hmsp-eig}, respectively.
		
		\item The expressions in \eqref{maximality-3} and \eqref{maximality} cover the well-known special cases for $d=1$, $d=2$, and $d=m$,
		previously studied in \cite{CPZ08,CQZ10,LQ13,FGH13}.
		
		\item The formulation in \eqref{maximality} 
		involves no normalized constraint, 
		while the one in \eqref{maximality-3}
		is weaker than that in \eqref{maximality-1}.
		Moreover, 
		both \eqref{maximality-3} and \eqref{maximality}
		admit convex optimization reformulations.
	\end{enumerate} 
	 
	 Throughout this section, let ${\boldsymbol \sigma}=\{\sigma_i\}^d_{i=1}$ be a shape partition of ${\mathcal A}\in\mathbb{R}^{N_1\times\cdots \times N_m}_{+}$ and ${\bf  p}=(p_1,\ldots,p_d)\in(1,\infty)^d$. Before deriving the convex optimization reformulations associated with \eqref{maximality-3} and \eqref{maximality}, we provide several auxiliary results in the next subsection. 
	
\subsection{Auxiliary results}\label{subsec-minmax-pre}
    In Lemma~\ref{pf-sp}, the condition $\rho(A)\leq 1$ plays a key role in ensuring
	the existence and uniqueness of a positive 
	$(\boldsymbol{\sigma},\mathbf{p})$-eigenpair of $\mathcal{A}$.
	Here, we provide an equivalent characterization of this condition that can be directly verified in practice. 	
	\begin{lemma}\label{lm-eqi-rA1}
		Let $A$ be given by (\ref{eq-A-def}). Then, $\rho(A)< 1$ (resp. $\rho(A)= 1$) if and only if $\sum_{i=1}^{d}|\sigma_i|/p_i < 1$ (resp. $\sum_{i=1}^{d}|\sigma_i|/p_i = 1$).
	\end{lemma}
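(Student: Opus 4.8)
The plan is to compute the spectral radius of $A = \diag(p_1'-1,\ldots,p_d'-1)({\bf 1}{\boldsymbol\nu}^\top - I)$ explicitly enough to pin down when it crosses the threshold $1$. The key observation is that $\rho(A) = \rho(A^\top)$, and $A^\top = ({\bf 1}{\boldsymbol\nu}^\top - I)^\top \diag(p_1'-1,\ldots,p_d'-1) = ({\boldsymbol\nu}{\bf 1}^\top - I)\diag(p_1'-1,\ldots,p_d'-1)$. Since $A$ is nonnegative and irreducible (as noted after \eqref{eq-A-def}), $\rho(A)$ is its Perron eigenvalue with a positive eigenvector, so I can use the characterization via the existence of a positive vector ${\bf v}$ with $A{\bf v} = \rho(A){\bf v}$, together with the monotonicity of the Perron root.

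\medskip
First I would reduce the matrix $A$ to a rank-one perturbation of a diagonal. Writing $D = \diag(p_1'-1,\ldots,p_d'-1)$, we have $A = D({\bf 1}{\boldsymbol\nu}^\top - I) = D{\bf 1}{\boldsymbol\nu}^\top - D$. Because $\nu_i = |\sigma_i|$ and $p_i' - 1 = 1/(p_i-1)$, a short computation gives $\sum_{i=1}^d \nu_i/(p_i'-1) = \sum_{i=1}^d |\sigma_i|(p_i-1) = \sum_{i=1}^d|\sigma_i|p_i - m$; hmm, that is not directly $\sum |\sigma_i|/p_i$, so instead I would look at the transpose form $A^\top = ({\boldsymbol\nu}{\bf 1}^\top - I)D = {\boldsymbol\nu}({\bf 1}^\top D) - D$, again a rank-one update of $-D$. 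The eigenvalue equation $A^\top {\bf v} = \mu {\bf v}$ with $\mu \neq -(p_i'-1)$ for all $i$ gives, after solving componentwise, $v_i = c(p_i'-1)/(\mu + p_i'-1)$ for a scalar $c$, and substituting back into the constraint that makes the rank-one term consistent yields the scalar equation
\begin{equation*}
\sum_{i=1}^d \frac{(p_i'-1)^2}{\mu + p_i'-1} \;=\; \text{(something)} \quad\Longleftrightarrow\quad \sum_{i=1}^d \frac{\nu_i (p_i'-1)}{\mu + p_i'-1} = 1 ,
\end{equation*}
the precise coefficients to be worked out carefully. The left-hand side, call it $g(\mu)$, is strictly decreasing on $(\max_i(-(p_i'-1)), \infty) \supseteq (0,\infty)$, so $\rho(A)$ is the unique positive root of $g(\mu)=1$ (one checks the Perron root lies in this branch because the Perron eigenvector is positive).

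\medskip
Next I would evaluate $g$ at $\mu = 1$. Since $g$ is strictly decreasing past its largest pole, $\rho(A) < 1 \iff g(1) < 1$, $\rho(A) = 1 \iff g(1) = 1$, and $\rho(A) > 1 \iff g(1) > 1$. Plugging $\mu = 1$ and using $p_i' - 1 = 1/(p_i-1)$ so that $1 + (p_i'-1) = 1 + 1/(p_i-1) = p_i/(p_i-1) = p_i'$, the term $\nu_i(p_i'-1)/(1 + p_i'-1) = \nu_i(p_i'-1)/p_i' = \nu_i(1/(p_i-1))/(p_i/(p_i-1)) = \nu_i/p_i = |\sigma_i|/p_i$. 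Hence $g(1) = \sum_{i=1}^d |\sigma_i|/p_i$, and the two equivalences follow immediately.

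\medskip
The main obstacle is getting the scalar characteristic equation $g(\mu) = 1$ exactly right — tracking the rank-one structure so that the coefficients $\nu_i$ and $p_i'-1$ land in the correct places, and confirming that the Perron root of $A$ is indeed the relevant root of $g$ (i.e., that it lies to the right of all the poles $-(p_i'-1)$, which it does since $\rho(A) > 0 > -(p_i'-1)$). An alternative, perhaps cleaner route that sidesteps explicit diagonalization: use Lemma~\ref{lm-ZM} applied to the $Z$-matrix $I - A$. One has $\rho(A) < 1 \iff I - A$ is a nonsingular $M$-matrix $\iff (I-A){\bf v} > 0$ for some ${\bf v} > 0$; testing ${\bf v} = {\boldsymbol\nu}^{[-1]} \circ (\,\cdot\,)$ or a suitable explicit vector and computing $(I-A){\bf v}$ componentwise should reduce the positivity of every component to the single inequality $\sum_i |\sigma_i|/p_i < 1$, with the boundary case $\sum_i |\sigma_i|/p_i = 1$ handled by Lemma~\ref{lm-irr-singular-M} (producing the positive null vector of the singular irreducible $M$-matrix $I - A$). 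I would likely present the $M$-matrix argument as the primary proof since it is shorter and uses machinery already stated in the excerpt, and relegate the explicit eigenvalue computation to a remark.
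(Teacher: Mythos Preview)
Your proposal is correct. The secular-equation route you outline as the primary argument works: the eigenvalue equation $A{\bf v}=\mu{\bf v}$ with $A=D{\bf 1}{\boldsymbol\nu}^\top-D$ and $c={\boldsymbol\nu}^\top{\bf v}\ne0$ indeed gives $v_i=c(p_i'-1)/(\mu+p_i'-1)$ and the scalar equation $g(\mu)=\sum_i\nu_i(p_i'-1)/(\mu+p_i'-1)=1$; your evaluation $g(1)=\sum_i|\sigma_i|/p_i$ is right, and monotonicity of $g$ on $(0,\infty)$ plus positivity of the Perron eigenvector pin down $\rho(A)$ as the relevant root.

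The paper takes a different and somewhat slicker path: it rewrites $A{\bf b}<{\bf b}$ (resp.\ $=$) as ${\bf p}^{[-1]}{\boldsymbol\nu}^\top{\bf b}<{\bf b}$ (resp.\ $=$) by multiplying through by $\diag({\bf p}-{\bf 1})$, and then observes that the latter is exactly the Perron--Frobenius inequality for the rank-one matrix ${\bf p}^{[-1]}{\boldsymbol\nu}^\top$, whose spectral radius is ${\boldsymbol\nu}^\top{\bf p}^{[-1]}=\sum_i|\sigma_i|/p_i$. So both arguments exploit the rank-one structure, but the paper avoids the explicit secular equation by reducing one Perron problem to another whose answer is immediate. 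Your $M$-matrix alternative is in the same spirit as the paper's approach, though the paper does not test a concrete vector; it instead passes to the auxiliary rank-one matrix. Your secular-equation computation has the advantage of giving $\rho(A)$ implicitly for all $\mu$, not just the threshold $\mu=1$, at the cost of a little more bookkeeping.
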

			\begin{remark}
		In the familiar cases for $d=1$,
		$d=2$ and $d=m$, we can verify
		the condition $\rho(A)\le 1$ (resp. $=$)
		in Lemma~\ref{pf-sp}
		by the following equivalence:
		\begin{equation*}
			\rho(A)\le 1\ (\mbox{resp.}\ =) \quad \Leftrightarrow  \quad
			\begin{cases}
				p_1\ge m\ (\mbox{resp.}\  =) & \mbox{if}\ d=1;
				\\ \frac{|\sigma_1|}{p_1} + \frac{|\sigma_2|}{p_2} \le 1 \ (\mbox{resp.}\ =)& 
				\mbox{if}\ d=2;
				\\ \sum_{i=1}^{m} 1/p_i \le 1 \ (\mbox{resp.}\  =)
				& \mbox{if}\ d=m.
			\end{cases}
		\end{equation*}
		While the Perron-Frobenius theorems in \cite{FGH13,LQ13} assume respectively that $p_i\ge m$ for all $i\in [d]$ when $d=2$ and $d = m$, 
		which is within the range we have specified.
		\end{remark}
		\begin{proof}
			From the definition of $A$ in \eqref{eq-A-def}, we have
			$
			A 
			= \mathrm{diag}({\bf p} - {\bf 1})^{-1} ({\bf 1}{\boldsymbol \nu}^{\top} 
			- I),
			$
			where $\mathrm{diag}({\bf p} - {\bf 1})=\mathrm{diag}(p_1-1,\ldots,p_d-1)$ and ${\boldsymbol \nu} = \left(|\sigma_1|, \ldots, |\sigma_d|\right)^\top$. Since $A$ is nonnegative and irreducible, by Perron-Frobenius theorem, $\rho(A)< 1$ (resp. $\rho(A)=1$) if and only if there exists a vector ${\bf b}\in \mathbb{R}^{d}_{++}$ such that 
			$A{\bf b} < {\bf b}$ (resp. $A{\bf b} = {\bf b}$), 
			or, equivalently, 
			$$	
			\left(\boldsymbol 1 {\boldsymbol \nu}^\top - I\right) {\bf b} < \mathrm{diag}({\bf p} - {\bf 1}) {\bf b} \quad ({\mbox resp. }\, \left(\boldsymbol 1 {\boldsymbol \nu}^\top - I\right) {\bf b} = \mathrm{diag}({\bf p} - {\bf 1}) {\bf b}),
			$$
			that is ${\bf p}^{[-1]} {\boldsymbol \nu}^\top {\bf b} < {\bf b}\quad  ({\mbox resp. }\, {\bf p}^{[-1]} {\boldsymbol \nu}^\top {\bf b} = {\bf b}).$
			Since ${\bf p}^{[-1]} {\boldsymbol \nu}^\top$ is an irreducible nonnegative matrix with $\rho({\bf p}^{[-1]} {\boldsymbol \nu}^\top) = {\boldsymbol \nu}^\top {\bf p}^{[-1]}$,
			this is further equivalent to ${\boldsymbol \nu}^\top {\bf p}^{[-1]} < 1$ (resp. ${\boldsymbol \nu}^\top {\bf p}^{[-1]}=1$).
			This completes the proof.
		\end{proof}
		
		By Lemmas~\ref{pf-sp} and \ref{lm-eqi-rA1}, each of the following assumptions is sufficient to ensure the existence and uniqueness of a positive $({\boldsymbol \sigma}, {\bf p})$-eigenpair for $\mathcal{A}$.
		\begin{assumption}\label{ass-str-nonneg}
	$\mathcal{A}$ is ${\boldsymbol \sigma}$-strictly nonnegative and $\sum_{i=1}^{d}|\sigma_i|/p_i < 1$.
	\end{assumption}
	\begin{assumption}\label{ass-weak-irr}
	${\mathcal A}$ is ${\boldsymbol \sigma}$-weakly irreducible and $\sum_{i=1}^{d}|\sigma_i|/p_i = 1$.
	\end{assumption}
	
	Proposition \ref{pf-sp-simplify} below presents a simplified version of the Collatz–Wielandt formula. Particularly, it removes the dependence on $F^{(\bssig, {\bf p})}$, the homogeneity matrix $A$ and its positive eigenvector ${\bf b}$ in Lemma~\ref{pf-sp}(ii).
	However, the sphere constraints in \eqref{maximality-2}
	do not admit a straightforward relaxation, at least within the approaches we have considered.
	
	\begin{proposition}\label{pf-sp-simplify}
	Suppose that either Assumption~\ref{ass-str-nonneg} or Assumption~\ref{ass-weak-irr} holds.
	Then, we have
	\begin{equation}\label{maximality-2}
		\begin{split}
			& \textstyle \max\limits_{{\bf y} \in \mathcal{S}_{+}^{({{\boldsymbol \sigma}},
					{\bf p})}} 
			\left\{ 
			\min\limits_{i\in[d], j_i\in [n_i], y_{i, j_i}>0}
			\frac{{\mathscr A}_{s_i, j_i}\left({\bf y}^{[{{\boldsymbol \sigma}}]} \right)}{y_{i,j_i}^{p_i-1}}
			\right\}
			\\ & \textstyle = \ r^{({\boldsymbol \sigma}, {\bf  p})}({\mathcal A})  = \min\limits_{{\bf x} \in \mathcal{S}_{++}^{({{\boldsymbol \sigma}}, 
					{\bf p})}}
			\left\{
			\max\limits_{i\in[d], j_i\in [n_i]}
			\frac{{\mathscr A}_{s_i, j_i}\left({\bf x}^{[{{\boldsymbol \sigma}}]} \right)}{x_{i,j_i}^{p_i-1}}
			\right\}.
		\end{split}
	\end{equation}
	\end{proposition}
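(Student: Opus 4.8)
The plan is to \emph{deduce} Proposition~\ref{pf-sp-simplify} directly from the Collatz--Wielandt formula \eqref{maximality-1} in Lemma~\ref{pf-sp}(ii); no fresh Perron--Frobenius argument is needed. The guiding observation is that, once the exponents are untangled, $\underline{\mathrm{cw}}(F^{(\bssig,{\bf p})},\cdot)$ and $\overline{\mathrm{cw}}(F^{(\bssig,{\bf p})},\cdot)$ are \emph{weighted geometric means}, with weights summing to one, of the quotients ${\mathscr A}_{s_i,j_i}({\bf x}^{[\bssig]})/x_{i,j_i}^{p_i-1}$; since any such mean lies between the minimum and the maximum of its arguments, this is exactly what turns \eqref{maximality-1} into \eqref{maximality-2}.

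First I would fix the setup. Under Assumption~\ref{ass-str-nonneg} or~\ref{ass-weak-irr}, Lemma~\ref{lm-eqi-rA1} gives $\rho(A)\le 1$, and since every ${\boldsymbol\sigma}$-weakly irreducible tensor is ${\boldsymbol\sigma}$-strictly nonnegative, ${\mathcal A}$ is ${\boldsymbol\sigma}$-strictly nonnegative; hence Lemma~\ref{pf-sp}(ii) applies and yields the pointwise bounds $\underline{\mathrm{cw}}(F^{(\bssig,{\bf p})},{\bf y})\le r\le\overline{\mathrm{cw}}(F^{(\bssig,{\bf p})},{\bf x})$ for all ${\bf y}\in\mathcal{S}_+^{(\bssig,{\bf p})}$, ${\bf x}\in\mathcal{S}_{++}^{(\bssig,{\bf p})}$, where $r:=r^{(\bssig,{\bf p})}({\mathcal A})$. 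Since moreover either $\rho(A)<1$ or ${\mathcal A}$ is ${\boldsymbol\sigma}$-weakly irreducible, Lemma~\ref{pf-sp}(iii) supplies the unique positive eigenvector ${\bf u}\in\mathcal{S}_{++}^{(\bssig,{\bf p})}$, satisfying ${\mathscr A}_{s_i}({\bf u}^{[\bssig]})=r\,\psi_{p_i}({\bf u}_i)=r\,{\bf u}_i^{[p_i-1]}$, i.e.\ ${\mathscr A}_{s_i,j_i}({\bf u}^{[\bssig]})/u_{i,j_i}^{p_i-1}=r$ for every $i\in[d]$ and $j_i\in[n_i]$.

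The algebraic core comes next. Set $w_i:=(\gamma-1)b_i/(p_i-1)$. I would check $\sum_{i=1}^d w_i=1$ using $p_i^\prime-1=1/(p_i-1)$, $\sum_i b_i=1$, and the definition of $\gamma$, which together give $\sum_i b_i p_i^\prime-1=\sum_i b_i/(p_i-1)$ and $\gamma-1=1/(\sum_i b_i p_i^\prime-1)$. Then, since $F^{(\bssig,{\bf p})}_{i,j_i}({\bf x})=({\mathscr A}_{s_i,j_i}({\bf x}^{[\bssig]}))^{p_i^\prime-1}=({\mathscr A}_{s_i,j_i}({\bf x}^{[\bssig]}))^{1/(p_i-1)}$ and $t\mapsto t^{1/(p_i-1)}$ is increasing on $[0,\infty)$, I would pull the inner $\max$ and $\min$ through the power to obtain, for ${\bf x}\in\mathcal{S}_{++}^{(\bssig,{\bf p})}$ and ${\bf y}\in\mathcal{S}_+^{(\bssig,{\bf p})}$,
\[
\overline{\mathrm{cw}}(F^{(\bssig,{\bf p})},{\bf x})=\prod_{i=1}^d\Bigl(\max_{j_i\in[n_i]}\tfrac{{\mathscr A}_{s_i,j_i}({\bf x}^{[\bssig]})}{x_{i,j_i}^{p_i-1}}\Bigr)^{w_i},\qquad
\underline{\mathrm{cw}}(F^{(\bssig,{\bf p})},{\bf y})=\prod_{i=1}^d\Bigl(\min_{j_i\in[n_i],\,y_{i,j_i}>0}\tfrac{{\mathscr A}_{s_i,j_i}({\bf y}^{[\bssig]})}{y_{i,j_i}^{p_i-1}}\Bigr)^{w_i},
\]
where the index set $\{j_i:y_{i,j_i}>0\}$ is nonempty because $\|{\bf y}_i\|_{p_i}=1$. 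As $w_i>0$, $\sum_i w_i=1$, and all the bracketed quantities are nonnegative (since ${\mathcal A}\ge 0$), the elementary inequality $\min_i a_i\le\prod_i a_i^{w_i}\le\max_i a_i$ yields $\underline{\mathrm{cw}}(F^{(\bssig,{\bf p})},{\bf y})\ge\min_{i,j_i:\,y_{i,j_i}>0}{\mathscr A}_{s_i,j_i}({\bf y}^{[\bssig]})/y_{i,j_i}^{p_i-1}$ and $\overline{\mathrm{cw}}(F^{(\bssig,{\bf p})},{\bf x})\le\max_{i,j_i}{\mathscr A}_{s_i,j_i}({\bf x}^{[\bssig]})/x_{i,j_i}^{p_i-1}$ (a vanishing argument is harmless, both sides then being zero).

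Finally I would assemble the chain of inequalities. Combining the pointwise bounds of Lemma~\ref{pf-sp}(ii) with the last two estimates gives $\min_{i,j_i:\,y_{i,j_i}>0}{\mathscr A}_{s_i,j_i}({\bf y}^{[\bssig]})/y_{i,j_i}^{p_i-1}\le r\le\max_{i,j_i}{\mathscr A}_{s_i,j_i}({\bf x}^{[\bssig]})/x_{i,j_i}^{p_i-1}$ for all ${\bf y}\in\mathcal{S}_+^{(\bssig,{\bf p})}$ and ${\bf x}\in\mathcal{S}_{++}^{(\bssig,{\bf p})}$, hence $\sup_{{\bf y}\in\mathcal{S}_+^{(\bssig,{\bf p})}}\min_{i,j_i}(\cdots)\le r\le\inf_{{\bf x}\in\mathcal{S}_{++}^{(\bssig,{\bf p})}}\max_{i,j_i}(\cdots)$; evaluating at ${\bf u}\in\mathcal{S}_{++}^{(\bssig,{\bf p})}$, both the inner $\min$ and the inner $\max$ equal $r$, so the supremum and the infimum are attained at ${\bf u}$, equal $r$, and may legitimately be written as $\max$ and $\min$. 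This is precisely \eqref{maximality-2}. The steps requiring most care are the exponent identity $\sum_i w_i=1$ and keeping the inner $\min$ running over exactly the set $\{j_i:y_{i,j_i}>0\}$ appearing in $\underline{\mathrm{cw}}$, so that no index mismatch creeps in; beyond that I expect no substantive obstacle.
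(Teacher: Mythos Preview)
Your proposal is correct and follows essentially the same route as the paper: both start from Lemma~\ref{pf-sp}(ii), rewrite $\overline{\mathrm{cw}}$ and $\underline{\mathrm{cw}}$ as weighted geometric means with weights $(\gamma-1)b_i(p_i'-1)=(\gamma-1)b_i/(p_i-1)$, verify these weights sum to one via $\sum_i b_i=1$ and the definition of $\gamma$, bound the geometric mean between its minimum and maximum, and then invoke the positive eigenvector from Lemma~\ref{pf-sp}(iii) to show attainment. The paper's presentation is slightly more compressed but the argument is identical.
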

	\begin{proof}
		Let ${F}^{({\boldsymbol \sigma},{\bf  p})}$, $A$ and ${\bf b}$ be as in (\ref{eq-Fsp-def}), (\ref{eq-A-def}) and (\ref{hmsp-eig}), respectively.
		According to Lemma~\ref{pf-sp}(ii), for any ${\bf x} \in \mathcal{S}_{++}^{({{\boldsymbol \sigma}},{\bf p})}$,
		\begin{equation}\label{prof-maxi-2}
			\begin{split}
				& \textstyle r^{({{\boldsymbol \sigma}}, {\bf p})}({\cal A}) \leq \overline{\mathrm{cw}}\left({F}^{({{\boldsymbol \sigma}}, {\bf p})}, {\bf x}\right) = \prod_{i=1}^d \left( \max\limits_{j_i \in [n_i]} \frac{{\mathscr A}_{s_i, j_i}\left({\bf x}^{[{{\boldsymbol \sigma}}]} \right)}{x_{i,j_i}^{p_i-1}} \right)^{(\gamma - 1) b_i (p_i^{\prime} - 1)} 
				\\&\leq \prod_{i=1}^d \left(
				\max_{l\in[d], j_l\in [n_l]}
				\frac{{\mathscr A}_{s_l, j_l}\left({\bf x}^{[{{\boldsymbol \sigma}}]} \right)}{x_{l,j_l}^{p_l-1}}
				\right)^{(\gamma - 1) b_i (p_i^{\prime} - 1)} 
				= \max_{i\in[d], j_i\in [n_i]}
				\frac{{\mathscr A}_{s_i, j_i}\left({\bf x}^{[{{\boldsymbol \sigma}}]} \right)}{x_{i,j_i}^{p_i-1}}.
			\end{split}
		\end{equation}
		The first equality of (\ref{prof-maxi-2}) holds due to the equalities (\ref{eq-cw-underline}), (\ref{eq-Fsp-def}) and $p_i^{\prime} - 1 = \frac{1}{p_i - 1}$. The second equality of (\ref{prof-maxi-2}) is deduced from the fact that
		\begin{equation*}
			\textstyle	\sum_{i=1}^{d} (\gamma - 1) b_i (p_i^{\prime} - 1) = (\gamma - 1) \left(\sum_{i=1}^{d}  b_i p_i^{\prime} - 1\right) = 1,
		\end{equation*}	
		where the first equality is obtained from $\sum_{i=1}^{d}b_i = 1$ (see (\ref{hmsp-eig})), and the last equality is obtained from the definition of $\gamma$ (see Lemma~\ref{pf-sp}(ii)).
		
		Similarly, for any ${\bf y} \in \mathcal{S}_{+}^{({{\boldsymbol \sigma}},{\bf p})}$,
		\begin{equation*}
			\begin{split}
				& r^{({{\boldsymbol \sigma}}, {\bf p})}({\cal A}) \ge \underline{\mathrm{cw}}\left({F}^{({{\boldsymbol \sigma}}, {\bf p})}, {\bf y}\right) = \prod_{i=1}^d \left( \min_{j_i \in [n_i], y_{i,j_i}>0} \frac{{\mathscr A}_{s_i, j_i}({\bf y}^{[{{\boldsymbol \sigma}}]} )}{y_{i,j_i}^{p_i-1}} \right)^{(\gamma - 1) b_i (p_i^{\prime} - 1)} \\
				&\ge \prod_{i=1}^d \left(
				\min_{l\in[d], j_l\in [n_l], y_{l, j_l}>0}
				\frac{{\mathscr A}_{s_l, j_l}({\bf y}^{[{{\boldsymbol \sigma}}]} )}{y_{l,j_l}^{p_l-1}} 
				\right)^{(\gamma - 1) b_i (p_i^{\prime} - 1)} 
				= \min_{i\in[d], j_i\in [n_i], y_{i, j_i}>0}
				\frac{{\mathscr A}_{s_i, j_i}({\bf y}^{[{{\boldsymbol \sigma}}]} )}{y_{i,j_i}^{p_i-1}}.
			\end{split}
		\end{equation*}
		
		Finally, by Lemmas~\ref{pf-sp} and \ref{lm-eqi-rA1}, there exists a positive $({\boldsymbol \sigma},{\bf  p})$-eigenpair $({\bar \lambda}, {\bf {\bar x}})\in\mathbb{R}_{++}\times {\mathbb S}^{{\boldsymbol \sigma}}_{++}$ of ${\mathcal{ A}}$ with ${\bar \lambda} = r^{({{\boldsymbol \sigma}}, {\bf p})}({\cal A})$, which satisfies
		\[
		 \min_{i\in[d], j_i\in [n_i]}
		\frac{{\mathscr A}_{s_i, j_i}\left({\bf {\bar x}}^{[{{\boldsymbol \sigma}}]} \right)}{{\bar x}_{i,j_i}^{p_i-1}}
		= r^{({{\boldsymbol \sigma}}, {\bf p})}({\cal A}) 
		= \max_{i\in[d], j_i\in [n_i]}
		\frac{{\mathscr A}_{s_i, j_i}\left({\bf {\bar x}}^{[{{\boldsymbol \sigma}}]} \right)}{{\bar x}_{i,j_i}^{p_i-1}}.
		\]
		This completes the proof.
	\end{proof}
	
	The next proposition characterizes the structure of positive solutions to the $\sigp$-eigenvalue equation \eqref{speig} for nonnegative tensors.
	In particular, it shows that the 
	$d$ block normalization constraints $\|{\bf x}_i\|_{p_i} = 1$ for all $i\in [d]$ in \eqref{speig} is equivalent to a single constraint
	$\|{\bf x}_{1}\|_{p_1} \cdots \| {\bf x}_{d}\|_{p_d} = 1$, as inspired by Lim \cite{Lim2005Singular}.
		\begin{proposition}\label{prop-eig}
		Let $(\lambda, {\bf x}) \in \mathbb{R}_{++}
		\times {\mathbb R}_{++}^{{\boldsymbol \sigma}}$.
		Then
		\begin{enumerate}[{\rm (i)}]
			\item $(\lambda, {\bf x})$ is a solution of the system
			\begin{equation}\label{eq-eig}
				{\mathscr A}_{s_i}({\bf x}^{[{\boldsymbol \sigma}]}) = \lambda  {\bf x}_{i}^{[p_i - 1]}, \quad \forall i\in [d]
			\end{equation}
			if and only if there exists a positive $({\boldsymbol \sigma},{\bf  p})$-eigenpair $(\bar{\lambda}, {\bf {\bar x}})$ and a positive scalar $t>0$ such that 
			\begin{equation}
				\label{eq-x-xhat-t}
				{\bf x} = (t^{1/p_1} {\bf {\bar x}}_{1}, \ldots, t^{1/p_d}{\bf {\bar x}}_{d}) \quad \mbox{and} \quad \lambda = \bar {\lambda} t^{\sum_{i=1}^{d}\nu_i/p_i - 1}.
			\end{equation}	
			
			\item $({\lambda}, {\bf {x}})$ is a positive $({\boldsymbol \sigma},{\bf  p})$-eigenpair if and only if it satisfies (\ref{eq-eig}) and 
\begin{equation}\label{eq-sigle-constraint}
	\|{\bf x}_{1}\|_{p_1} \cdots \| {\bf x}_{d}\|_{p_d} = 1.
\end{equation}			
		\end{enumerate}
	\end{proposition}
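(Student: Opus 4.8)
The plan is to derive both parts from the multihomogeneity of the map ${\bf x}\mapsto{\mathscr A}({\bf x}^{[\bssig]})$. Since the $(i,j)$ entry of its homogeneity matrix ${\bf 1}{\boldsymbol\nu}^{\top}-I$ (Remark~\ref{rm-multihom}) equals $\nu_j-\delta_{ij}$, the definition of a multihomogeneous mapping yields the scaling identity ${\mathscr A}_{s_i}(({\boldsymbol\theta}\otimes{\bf x})^{[\bssig]})=\theta_i^{-1}\bigl(\prod_{j=1}^{d}\theta_j^{\nu_j}\bigr){\mathscr A}_{s_i}({\bf x}^{[\bssig]})$ for all $i\in[d]$, ${\boldsymbol\theta}\in\Rpp^{d}$ and ${\bf x}\in\Rsigpp$; this is essentially the only tool needed, together with the elementary fact that $\psi_{p_i}({\bf x}_i)={\bf x}_i^{[p_i-1]}$ whenever ${\bf x}_i>0$.

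For the ``if'' part of (i), I would substitute ${\bf x}={\boldsymbol\theta}\otimes\bar{\bf x}$ with $\theta_i=t^{1/p_i}$ into the left-hand side of \eqref{eq-eig}: the scaling identity turns ${\mathscr A}_{s_i}({\bf x}^{[\bssig]})$ into $\theta_i^{-1}\bigl(\prod_j\theta_j^{\nu_j}\bigr){\mathscr A}_{s_i}(\bar{\bf x}^{[\bssig]})$, the eigenpair relation for $(\bar\lambda,\bar{\bf x})$ replaces ${\mathscr A}_{s_i}(\bar{\bf x}^{[\bssig]})$ by $\bar\lambda\bar{\bf x}_i^{[p_i-1]}$, and substituting $\bar{\bf x}_i^{[p_i-1]}=t^{-(p_i-1)/p_i}{\bf x}_i^{[p_i-1]}$ and collecting the powers of $t$ leaves exactly ${\mathscr A}_{s_i}({\bf x}^{[\bssig]})=\bar\lambda\,t^{\sum_j\nu_j/p_j-1}{\bf x}_i^{[p_i-1]}$, i.e.\ \eqref{eq-eig} with $\lambda=\bar\lambda\,t^{\sum_j\nu_j/p_j-1}$; positivity of $\lambda$ and ${\bf x}$ is immediate. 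For the ``only if'' part the real point is to produce a \emph{single} scalar $t$; the only candidate compatible with ${\bf x}_i=t^{1/p_i}\bar{\bf x}_i$ and $\|\bar{\bf x}_i\|_{p_i}=1$ is $t=\|{\bf x}_i\|_{p_i}^{p_i}$, so I must show this number does not depend on $i$. I would obtain this by taking the inner product of the $i$-th equation of \eqref{eq-eig} with ${\bf x}_i$: the right-hand side is $\lambda\|{\bf x}_i\|_{p_i}^{p_i}$ (using ${\bf x}_i>0$), while the left-hand side is $\sum_{t_i}{\mathscr A}_{s_i,t_i}({\bf x}^{[\bssig]})x_{i,t_i}=f_{\mathcal A}({\bf x}^{[\bssig]})$ directly from \eqref{eq-f-A-def}--\eqref{eq-A-scr-def} (the $s_i$-th slot of ${\bf x}^{[\bssig]}$ is ${\bf x}_i$), a quantity independent of $i$; since $\lambda>0$ this gives a well-defined $t:=f_{\mathcal A}({\bf x}^{[\bssig]})/\lambda=\|{\bf x}_i\|_{p_i}^{p_i}>0$. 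Setting $\bar{\bf x}_i:=t^{-1/p_i}{\bf x}_i$ makes $\|\bar{\bf x}_i\|_{p_i}=1$, and one more application of the scaling identity (with $\theta_i=t^{-1/p_i}$) shows $\bar{\bf x}$ satisfies \eqref{eq-eig} with eigenvalue $\bar\lambda=\lambda\,t^{1-\sum_j\nu_j/p_j}>0$; since $\bar{\bf x}>0$, the pair $(\bar\lambda,\bar{\bf x})$ is a positive $(\bssig,{\bf p})$-eigenpair and \eqref{eq-x-xhat-t} holds by construction. I expect the ``$\|{\bf x}_i\|_{p_i}^{p_i}$ is independent of $i$'' observation to be the only nontrivial step; everything else is bookkeeping with exponents of $t$.

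For (ii), the forward implication is immediate from the definition of a $(\bssig,{\bf p})$-eigenpair: positivity turns \eqref{speig} into \eqref{eq-eig}, and the block normalizations $\|{\bf x}_i\|_{p_i}=1$ give \eqref{eq-sigle-constraint}. For the converse, I would invoke part (i) to write ${\bf x}_i=t^{1/p_i}\bar{\bf x}_i$ with $\|\bar{\bf x}_i\|_{p_i}=1$, so $\|{\bf x}_i\|_{p_i}=t^{1/p_i}$ and \eqref{eq-sigle-constraint} becomes $t^{\sum_{i=1}^{d}1/p_i}=1$; as $\sum_{i=1}^{d}1/p_i>0$, this forces $t=1$, whence $(\lambda,{\bf x})=(\bar\lambda,\bar{\bf x})$ is a positive $(\bssig,{\bf p})$-eigenpair.
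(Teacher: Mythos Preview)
Your proof is correct and follows essentially the same route as the paper: both hinge on the inner-product identity ${\bf x}_i^\top{\mathscr A}_{s_i}({\bf x}^{[\bssig]})=f_{\mathcal A}({\bf x}^{[\bssig]})$ to show that $\|{\bf x}_i\|_{p_i}^{p_i}$ is independent of $i$, and both use the multihomogeneity scaling to pass between ${\bf x}$ and $\bar{\bf x}$. The only noteworthy difference is in the ``only if'' direction of (i): the paper first normalizes each block and then invokes \cite[Lemma~5.1]{GTH19} (via the auxiliary map $F^{(\bssig,{\bf p})}$) to conclude that the normalized vector $\bar{\bf x}$ is a $(\bssig,{\bf p})$-eigenvector, whereas you verify this directly with the scaling identity---the same computation the paper carries out anyway in the ``if'' direction. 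Your version is therefore slightly more self-contained, at no extra cost.
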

	\begin{proof}
		(i) We first prove the ``only if'' part.
		Suppose that $(\lambda, {\bf x}) \in
		\mathbb{R}_{++} \times {\mathbb R}_{++}^{{\boldsymbol \sigma}}$ satisfies (\ref{eq-eig}).
		Let ${\barf x}\in \Rsigpp$ be given by
		\begin{equation}
			\label{eq-x-hat}
			{\bf {\bar x}} \coloneq (\theta_1^{-1} {\bf x}_{1}, \ldots, \theta_d^{-1} {\bf x}_{d}) \quad \mbox{with}\quad \theta_i \coloneq \|{\bf x}_{i}\|_{p_i},\quad \forall i \in [d].
		\end{equation}
		According to the definition of ${F}^{({\boldsymbol \sigma},{\bf  p})}$ in \eqref{eq-Fsp-def}, we have
		\begin{equation*}
			{F}_{i}^{({\boldsymbol \sigma},{\bf  p})}({\bf x}) 
			= ({\mathscr A}_{s_i}({\bf x}^{[{\boldsymbol \sigma}]}))^{[p_{i}^{\prime} -1]} 
			= \lambda^{p_{i}^{\prime} -1}  {\bf x}_{i}, \quad \forall i\in [d],
		\end{equation*}
		where $p_{i}^{\prime} -1 = \frac{1}{p_i - 1}$. From \cite[Lemma~5.1]{GTH19}, it follows that ${\bf {\bar x}}$ is a positive $\sigp$-eigenvector of $\mathcal A$. Let ${\bar \lambda}$ be the $\sigp$-eigenvalue associated with ${\barf x}$. Then, from (\ref{eq-f-A-def}) and (\ref{eq-A-scr-def}), it holds that 
		\begin{equation*}\label{eq-fA-lambda-bar}
			f_{{\cal A}}( {{\barf x}}^{[{{\boldsymbol \sigma}}]} ) = {{\barf x}}_{i}^\top {{\mathscr A}_{s_i}( {{\barf x}}^{[{{\boldsymbol \sigma}}]} )} = \bar \lambda \|{\barf x}_{i}\|_{p_i}^{p_i} = \bar \lambda,\quad \forall i\in [d].
		\end{equation*}
		By (\ref{eq-eig}) and the definition of $\theta_i$'s in (\ref{eq-x-hat}), we have
		\begin{equation*}
			f_{{\cal A}}( {{\bf x}}^{[{{\boldsymbol \sigma}}]} ) = {{\bf x}}_{i}^\top {{\mathscr A}_{s_i}( {{\bf x}}^{[{{\boldsymbol \sigma}}]} )} = \lambda \|{\bf x}_{i}\|_{p_i}^{p_i} = \lambda \theta_{i}^{p_i}, \quad \forall i\in [d],
		\end{equation*}	
		which shows that
		\begin{equation}\label{eq-theta-t}
			\theta_{i} = t^{1/p_i} \quad \mbox{with}\quad t \coloneq f_{{\cal A}}( {{\bf x}}^{[{{\boldsymbol \sigma}}]})/\lambda>0, \quad \forall i\in [d].
		\end{equation}	
		Furthermore, from (\ref{eq-x-hat}) and Remark~\ref{rm-multihom}, 
		\begin{align*}\textstyle
			f_{{\cal A}}( { {\bf {\bar x}}}^{[{{\boldsymbol \sigma}}]} ) 
			= f_{{\cal A}}( {{\bf x}}^{[{{\boldsymbol \sigma}}]} ) \prod_{i=1}^{d}  \theta_{i}^{-\nu_i} 
			= f_{{\cal A}}( {{\bf x}}^{[{{\boldsymbol \sigma}}]} ) \prod_{i=1}^{d} t^{-\nu_i/p_i}
			 = f_{{\cal A}}( {{\bf x}}^{[{{\boldsymbol \sigma}}]} ) t^{-\sum_{i=1}^{d}\nu_i/p_i}, \label{eq-fA-t}
		\end{align*}
		where the second equality holds due to (\ref{eq-theta-t}). By the definition of $t$ in (\ref{eq-theta-t}) and the above analysis, we have
		\begin{equation}\label{eq-lambda}
			\textstyle \lambda = f_{{\cal A}}( {{\bf x}}^{[{{\boldsymbol \sigma}}]})/t = f_{{\cal A}}( { {\bf {\bar x}}}^{[{{\boldsymbol \sigma}}]} ) t^{\sum_{i=1}^{d}\nu_i/p_i - 1}.
		\end{equation}
		Combining \eqref{eq-x-hat}, \eqref{eq-theta-t} and \eqref{eq-lambda} together, we conclude that (\ref{eq-x-xhat-t}) holds.

		To establish the ``if'' part, suppose that (\ref{eq-x-xhat-t}) holds. 
		Then, we claim that the pair $(\lambda, {\bf x})$ satisfies (\ref{eq-eig}). Indeed, from (\ref{eq-x-xhat-t}) and Remark~\ref{rm-multihom}, it follows that, for all $i\in [d]$, 
		\begin{equation*}
			\begin{split}
				{{\mathscr A}_{s_i}( {{\bf x}}^{[{{\boldsymbol \sigma}}]} )} 
				& \textstyle = t^{-1/p_i} \prod_{j=1}^{d}  t^{\nu_j/p_j}   {{\mathscr A}_{s_i}( {{\bf {\bar x}}}^{[{{\boldsymbol \sigma}}]} )} 
				\overset{\mathrm{(a)}}{=} {\bar \lambda} t^{-1/p_i}   t^{\sum_{j=1}^{d}\nu_j/p_j}  {\bf {\bar x}}_{i}^{[p_i-1]} 
				\\ & \textstyle \overset{\mathrm{(b)}}{=} {\bar \lambda} t^{\sum_{j=1}^{d}\nu_j/p_j-1}  {\bf x}_{i}^{[p_i-1]} 
				\overset{\mathrm{(c)}}{=} \lambda   {\bf x}_{i}^{[p_i-1]},
			\end{split}
		\end{equation*}
		where (a) follows from the definition of the $\sigp$-eigenvalue, while (b) and (c) follow from (\ref{eq-x-xhat-t}).
		This completes item (i).
		
		(ii) The ``only if'' part is clear. We turn to show the ``if'' part.
		Suppose that $(\lambda, {\bf x}) \in \mathbb{R}_{++} \times {\mathbb R}_{++}^{{\boldsymbol \sigma}}$ satisfies (\ref{eq-eig}) and \eqref{eq-sigle-constraint}.
		By item (i), there exists a positive $\sigp$-eigenpair (${\bf {\bar x}}, \bar{\lambda}$) and a scalar $t>0$ such that (\ref{eq-x-xhat-t}) holds.
		This implies that
		\begin{equation*}
			\textstyle 1 = \|{\bf x}_{p_1}\|\cdots \|{\bf x}_d\|_{p_d} = \prod_{i=1}^{d}t^{1/p_i} \|{\bf {\bar x}}_1\|_{p_1}\cdots \|{\bf {\bar x}}_d\|_{p_d} = t^{\sum_{i=1}^{d}1/p_i}.
		\end{equation*}
		Thus $t=1$ and ${\bf x} = {\bf \bar x}$. 
		This completes the proof.
	\end{proof}
	
	To investigate the min-max Collatz–Wielandt formula in \eqref{maximality-3} and \eqref{maximality}, here we define a mapping $\Phi=\left(\Phi_1, \ldots, \Phi_d\right): {\mathbb R}_{++}^{{{\boldsymbol \sigma}}} \rightarrow {\mathbb R}_{+}^{{{\boldsymbol \sigma}}}$ and a function \( c: \mathbb{R}^{\boldsymbol{\sigma}} \to \mathbb{R}_{+} \) as
	\begin{equation}\label{eq-Phi-def}
		\textstyle \Phi_{i}({\bf x}) \coloneq  \frac{{{\mathscr A}_{s_i}({\bf x}^{[{{\boldsymbol \sigma}}]} )}}{{\bf x}_{i}^{[p_i-1]}}, \quad \forall i \in [d]\quad \text{and}\quad c({\bf x}) \coloneq 
		\|{\bf x}_1\|_{p_1} \cdots \|{\bf x}_d\|_{p_d},
	\end{equation}
	where ${\mathscr A}$, $s_i$'s and ${\bf x}^{[\bssig]}$ are defined by (\ref{eq-A-scr-def}), (\ref{eq-nu-s-n-def}) and (\ref{eq-x-sigma-def}), respectively.
	
	\begin{remark}\label{rm-system-Phis-c}
		Proposition~\ref{prop-eig}(ii) implies that 
		$(\lambda, {\bf x}) \in \mathbb{R}_{++} 
		\times {\mathbb R}_{++}^{{\boldsymbol \sigma}}$ is a positive $(\bssig, {\bf p})$-eigenpair of $\mathcal{A}$ if and only if it satisfies the system:
		\begin{equation*}
			\textstyle\Phi({\bf x}) - \lambda {\bf 1} = 0 
			\quad \mbox{and} \quad
			c({\bf x}) = 1.
		\end{equation*}
	\end{remark}
	
	Now, we summarize some fundamental properties of the mapping $\Phi$ in \eqref{eq-Phi-def}.
	\begin{lemma}\label{lm-Phi}
		Let $\Phi$ be given in (\ref{eq-Phi-def}) and 
		${\bf x} \in \mathbb{R}^{{\boldsymbol \sigma}}_{++}$.
		$-D \Phi({\bf x})$ is a $Z$-matrix and
		\begin{equation}\label{eq-DPhi-px}
			\textstyle -D \Phi({\bf x}) ({\bf  p}^{[-1]} \otimes {\bf x}) 
			=  (1 - \sum_{i=1}^{d}|\sigma_i|/p_i)\Phi({\bf x}).
		\end{equation}
		Moreover, it also holds that:
		\begin{enumerate}[{\rm (i)}]
			\item If ${\cal A}$ is ${\boldsymbol \sigma}$-strictly nonnegative, then $\Phi({\bf x})>0$.
			\item If Assumption~\ref{ass-str-nonneg} holds,
			then $-D \Phi({\bf x})$ is a nonsingular $M$-matrix.
			\item If Assumption~\ref{ass-weak-irr} holds, then $-D \Phi({\bf x})$ is an irreducible singular $M$-matrix.
		\end{enumerate}
	\end{lemma}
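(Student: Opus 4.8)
The plan is to dispatch the two opening assertions and then items (i)--(iii) in turn, everything resting on the polynomial structure of the numerator of $\Phi$ together with one Euler-type identity. Write $\Phi_{i,t_i}({\bf x}) = {\mathscr A}_{s_i,t_i}({\bf x}^{[{\boldsymbol \sigma}]})\,x_{i,t_i}^{-(p_i-1)}$, and note that on $\mathbb{R}^{{\boldsymbol \sigma}}_{++}$ the function $\Phi_{i,t_i}$ is smooth, while ${\mathscr A}_{s_i,t_i}({\bf x}^{[{\boldsymbol \sigma}]})$ is a polynomial in the entries of ${\bf x}$ with nonnegative coefficients (because ${\mathcal A}\ge 0$) and with every monomial of positive total degree. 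For the $Z$-matrix claim I would differentiate $\Phi_{i,t_i}$ with respect to $x_{j,l_j}$ for an off-diagonal pair $(i,t_i)\ne(j,l_j)$: the prefactor $x_{i,t_i}^{-(p_i-1)}$ does not involve $x_{j,l_j}$, so the corresponding entry of $D\Phi({\bf x})$ equals $x_{i,t_i}^{-(p_i-1)}\,\partial_{x_{j,l_j}}{\mathscr A}_{s_i,t_i}({\bf x}^{[{\boldsymbol \sigma}]})\ge 0$; hence all off-diagonal entries of $D\Phi({\bf x})$ are nonnegative and $-D\Phi({\bf x})$ is a $Z$-matrix (only the diagonal entries additionally pick up the harmless negative term $-(p_i-1)x_{i,t_i}^{-p_i}{\mathscr A}_{s_i,t_i}({\bf x}^{[{\boldsymbol \sigma}]})$).

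For \eqref{eq-DPhi-px} I would observe that $\Phi_{i,t_i}$ is positively homogeneous of degree $|\sigma_i|-p_i$ in the block ${\bf x}_i$ and of degree $|\sigma_j|$ in each block ${\bf x}_j$ with $j\ne i$; equivalently, $\Phi$ is multihomogeneous with homogeneity matrix ${\bf 1}{\boldsymbol \nu}^{\top}-I-\diag({\bf p}-{\bf 1})$. Applying Euler's identity blockwise (the same computation as in Lemma~\ref{lm-multihomo}) and combining the block identities with the weights $1/p_j$, the $(i,t_i)$-entry of $D\Phi({\bf x})({\bf p}^{[-1]}\otimes{\bf x})$ becomes $\bigl(\sum_{j\ne i}|\sigma_j|/p_j+(|\sigma_i|-p_i)/p_i\bigr)\Phi_{i,t_i}({\bf x})=\bigl(\sum_{j=1}^{d}|\sigma_j|/p_j-1\bigr)\Phi_{i,t_i}({\bf x})$, which is exactly \eqref{eq-DPhi-px}.

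The three items then follow quickly. For (i) I would identify $\boldsymbol\sigma$-strict nonnegativity with the condition that, for every $(i,t_i)$, the polynomial ${\mathscr A}_{s_i,t_i}({\bf x}^{[{\boldsymbol \sigma}]})$ is not identically zero: row $(i,t_i)$ of the matrix $M$ of Definition~\ref{struc-t} has a nonzero entry iff some $\partial_{x_{j,l_j}}{\mathscr A}_{s_i,t_i}({\bf x}^{[{\boldsymbol \sigma}]})$ is positive at ${\bf x}={\bf 1}$, which — given nonnegative coefficients and no constant term — is equivalent to the polynomial being nonzero, which forces ${\mathscr A}_{s_i,t_i}({\bf x}^{[{\boldsymbol \sigma}]})>0$ for all ${\bf x}>0$; hence $\Phi({\bf x})>0$. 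For (ii), $-D\Phi({\bf x})$ is a $Z$-matrix, and taking ${\bf v}={\bf p}^{[-1]}\otimes{\bf x}\in\mathbb{R}^{{\boldsymbol \sigma}}_{++}$ in \eqref{eq-DPhi-px} together with (i) and $\sum_i|\sigma_i|/p_i<1$ yields $-D\Phi({\bf x}){\bf v}=(1-\sum_i|\sigma_i|/p_i)\Phi({\bf x})>0$, so Lemma~\ref{lm-ZM}(ii) applies. For (iii), $\boldsymbol\sigma$-weak irreducibility implies $\boldsymbol\sigma$-strict nonnegativity, so (i) holds and $\Phi({\bf x})>0$; moreover the same monomial bookkeeping as in (i) shows that for ${\bf x}>0$ the off-diagonal entry $[D\Phi({\bf x})]_{(i,t_i),(j,l_j)}$ is positive exactly when $M_{(i,t_i),(j,l_j)}$ is, so $-D\Phi({\bf x})$ is irreducible because $M$ is; finally, since now $\sum_i|\sigma_i|/p_i=1$, \eqref{eq-DPhi-px} gives $-D\Phi({\bf x})({\bf p}^{[-1]}\otimes{\bf x})=0$ with ${\bf p}^{[-1]}\otimes{\bf x}\in\mathbb{R}^{{\boldsymbol \sigma}}_{++}$, whence Lemma~\ref{lm-irr-singular-M} identifies $-D\Phi({\bf x})$ as an irreducible singular $M$-matrix. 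I expect the main obstacle to be the polynomial/sparsity translations in (i) and (iii): $\boldsymbol\sigma$-strict nonnegativity and $\boldsymbol\sigma$-weak irreducibility are conditions on $M$, a matrix frozen at ${\bf x}={\bf 1}$, and the work is to match them with pointwise positivity, respectively the off-diagonal zero pattern, of $D\Phi({\bf x})$ at an arbitrary positive ${\bf x}$; the homogeneity identity and the $M$-matrix conclusions are then routine.
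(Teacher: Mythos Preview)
Your proof is correct and structurally identical to the paper's for the $Z$-matrix claim, the Euler identity \eqref{eq-DPhi-px}, and item~(ii). The one genuine difference is in items~(i) and~(iii): the paper appeals to \cite[Lemma~6.5(ii)]{GTH19} to obtain ${\mathscr A}_{s_i,t_i}({\bf x}^{[{\boldsymbol\sigma}]})>0$ and to \cite[Lemma~6.4]{GTH19} (via $DF^{({\boldsymbol\sigma},{\bf p})}({\bf x})$) to obtain irreducibility of $-D\Phi({\bf x})$, whereas you argue both points directly from the polynomial structure. Your observation --- that for a polynomial with nonnegative coefficients and no constant term, positivity of its gradient at ${\bf 1}$, positivity at any ${\bf x}>0$, and not being identically zero are all equivalent --- handles~(i) cleanly; and your claim that the off-diagonal nonzero pattern of $D\Phi({\bf x})$ coincides with that of $M$ (because $\partial_{x_{j,l_j}}{\mathscr A}_{s_i,t_i}({\bf x}^{[{\boldsymbol\sigma}]})$ is again such a polynomial) gives irreducibility in~(iii) without detouring through $F^{({\boldsymbol\sigma},{\bf p})}$. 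Your route is more self-contained and avoids the external citations; the paper's route is shorter on the page but pushes the polynomial bookkeeping you identify as ``the main obstacle'' into the cited lemmas. Both arrive at the same $M$-matrix invocations (Lemmas~\ref{lm-ZM} and~\ref{lm-irr-singular-M}) to finish.
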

	\begin{proof}
		For each $i\in [d]$ and $j_i \in [n_i]$, let	
$		G_{i,j_i}({\bf x}) \coloneq {{\mathscr A}_{s_i, j_i}\left({\bf x}^{[{{\boldsymbol \sigma}}]} \right)}$ for all ${\bf x} \in \mathbb{R}^{{\boldsymbol \sigma}}_{++}$.
From the definition of $\Phi$ in \eqref{eq-Phi-def},
a straightforward calculation yields
		\begin{equation}\label{eq-DPhi}
			D \Phi({\bf x}) = \diag({\bf x}^{\otimes [1-{\bf  p}]}) D G({\bf x}) - \diag( ({\bf  p}-1) \otimes G({\bf x}) \circ {\bf x}^{\otimes [-{\bf  p}]}  ),
		\end{equation}
		where we denote ${\bf z}^{\otimes [{\boldsymbol \theta}]} \coloneq ({\bf z}_{1}^{[\theta_1]},\ldots, {\bf z}_d^{[\theta_d]})$ for all ${\bf z} \in \mathbb{R}^{\boldsymbol \sigma}_{++}$ and  ${\boldsymbol \theta} \in \mathbb{R}^{d}$.
		Since ${\bf p}>1$, ${\bf x}>0$ and $D G({\bf x})\ge 0$, $-D \Phi({\bf x})$ is a $Z$-matrix.
		
		Furthermore, from the definition of $\Phi$ in (\ref{eq-Phi-def}), one can see that the homogeneous matrix of $\Phi$ is ${\bf 1} {\boldsymbol \nu}^\top - \diag({\bf  p})$, where ${\boldsymbol \nu} = (|\sigma_1|, \ldots, |\sigma_d|)^{\top}$.
		By this fact and Lemma~\ref{lm-multihomo}, we can obtain that
		\begin{equation*}
	\textstyle		-D \Phi({\bf x}) ({\bf  p}^{[-1]} \otimes {\bf x}) 
			=  -(({\bf 1} {\boldsymbol \nu}^\top - \diag({\bf  p})) {\bf  p}^{[-1]}) \otimes \Phi({\bf x})=
			(1 - \sum_{i=1}^{d}|\sigma_i|/p_i)\Phi({\bf x}).
		\end{equation*}
		
		Now, we turn to prove items (i)-(iii).
		\begin{enumerate}[(i)]
			\item Since ${\mathcal A}$ is ${\boldsymbol \sigma}$-strictly nonnegative, it follows from \cite[Lemma~6.5(ii)]{GTH19} and (\ref{eq-Fsp-def}) that $G({\bf x})>0$. Hence, $\Phi({\bf x})>0$ by definition.
			
			\item By \eqref{eq-DPhi-px} and the fact $\Phi({\bf x})>0$, we have $-D \Phi({\bf x}) ({\bf  p}^{[-1]} \otimes {\bf x})>0$. Considering that $-D \Phi({\bf x})$ is a $Z$-matrix, Lemma \ref{lm-ZM} implies that $-D \Phi({\bf x})$ is a nonsingular $M$-matrix.
			
			\item According to \cite[Lemma~6.4]{GTH19}, $DF^{({\boldsymbol \sigma}, {\bf p})}({\bf x})$ is irreducible, where $F^{({\boldsymbol \sigma}, {\bf p})}$ is defined in (\ref{eq-Fsp-def}), and
			$$
			DF^{({\boldsymbol \sigma}, {\bf p})}({\bf x}) = \diag(({\bf p^{\prime}} - {\bf 1}) \otimes (G({\bf x})^{\otimes [{\bf p^{\prime}} - 2 {\bf 1}]})) DG({\bf x}).
			$$
			Together with ${\bf p^{\prime}} - {\bf 1} = ({\bf p} - {\bf 1})^{[-1]}>0$ and $G({\bf x})>0$,
			this implies that $D G({\bf x})$ is irreducible. 
			Combining this with \eqref{eq-DPhi}, 
			we deduce that $-D \Phi({\bf x})$ is irreducible. 
			Moreover, from \eqref{eq-DPhi-px} and Assumption~\ref{ass-weak-irr}, we can obtain $-D\Phi({\bf x})\left({\bf p}^{[-1]} \otimes {\bf x}\right) = 0$.
			Since $-D\Phi({\bf x})$ is a $Z$-matrix, it follows from Lemma~\ref{lm-irr-singular-M} that $-D\Phi({\bf x})$ is an irreducible singular $M$-matrix.
		\end{enumerate}
	\end{proof}

	Next, by applying a log-exp transformation 
	as in \cite{YY11}, we convert the functions $\Phi$ and $c$ in \eqref{eq-Phi-def} into convex functions. Specifically, we define a mapping ${F} : {\mathbb R}^{{{\boldsymbol \sigma}}} \rightarrow {\mathbb R}^{{{\boldsymbol \sigma}}}$ and a function $g:\mathbb{R}^{\sigma} \to \mathbb{R}$ as follows:
	for all ${\bf y}\in {\mathbb R}^{{{\boldsymbol \sigma}}}$
	\begin{equation}\label{eq-F-def}
		\textstyle
		{F}({\bf y})  \coloneq  \log \Phi(e^{{\bf y}})\quad\text{and}\quad g({\bf y}) \coloneq \log c(e^{\bf y}) = \sum_{i=1}^{d} p_i^{-1} \log\left(\sum_{j_{i}=1}^{n_i}  e^{p_i y_{i,j_{i}}}\right),
	\end{equation}
	where, for every ${\bf z}\in {\mathbb R}_{++}^{{\boldsymbol \sigma}}$ and ${\bf w} \in {\mathbb R}^{{\boldsymbol \sigma}}$, 
	$$
	\log {\bf z} \coloneq  (\log z_{1,1},\ldots, \log z_{d,n_d})^\top\in {\mathbb R}^{{\boldsymbol \sigma}}
	\quad \mbox{and} \quad
	e^{{\bf w}} \coloneq  (e^{w_{1,1}}, \ldots, e^{w_{d,n_d}})^\top \in {\mathbb R}_{++}^{{\boldsymbol \sigma}}.
	$$
	
	\begin{remark}\label{rm-Fij-conv}
		Observing that for each $i\in [d]$ and $j_i\in [n_i]$, the function $\Phi_{i,j_i}$ is a posynomial (see \cite[Definition~4.2]{YY11}). This implies that ${F}_{i, j_i}$ is convex. Undoubtedly, log-sum-exp functions are convex. 
		Since $g$ is a positive linear combination of such functions, it is also convex.
	\end{remark}
	
	In the following, we present some key properties of $F$ and $g$ as in \eqref{eq-F-def}. By direct computation, we can obtain the gradient of $g$:
	\begin{equation}\label{nabla g}
		\nabla g({\bf y}) 
		= \begin{pmatrix}
			\|e^{{\bf y}_1}\|_{p_1}^{-p_1} e^{p_1 {\bf y}_1},\, \cdots,\, \|e^{{\bf y}_d}\|_{p_d}^{-p_d} e^{p_d {\bf y}_d}
		\end{pmatrix}^\top,
		\quad \forall y \in \Rsig,
	\end{equation}
	and the identity:
	\begin{equation}\label{eq-p-g}
		\textstyle ({\bf p}^{[-1]} \otimes {\bf 1})^{\top} \nabla g({\bf y}) \equiv \sum_{i=1}^{d} p_i^{-1}, \quad \forall {\bf y}\in \Rsig.
	\end{equation}
	
	\begin{lemma}\label{lm-F}
		Suppose that ${\cal A}$ is ${\boldsymbol \sigma}$-strictly nonnegative.
		Let $F$ and $g$ be defined by (\ref{eq-F-def}).
		Then, for any ${\bf y}\in \Rsig$, the following statements hold:
		\begin{enumerate}[{\rm (i)}]
			\item If Assumption~\ref{ass-str-nonneg} holds, then $-D F({\bf y})$ is a nonsingular $M$-matrix.
			\item If Assumption~\ref{ass-weak-irr} holds, then $-D F({\bf y})$ is an irreducible singular $M$-matrix.
			\item If Assumption~\ref{ass-str-nonneg} or \ref{ass-weak-irr} holds, then the following matrix is nonsingular:
			\begin{equation*}
				\begin{pmatrix}
					D {F}({\bf y})^\top & \nabla  g({\bf y}) \\
					{\bf 1}^\top  & 0
				\end{pmatrix}.
			\end{equation*}
			\item If Assumption~\ref{ass-str-nonneg} holds, then the system
			\begin{equation*}
				D {F}({\bf { y}})^\top {\bf  z} + \delta \nabla g({\bf y}) = 0,
				\  \langle {\bf 1}, {\bf  z}\rangle = 1
			\end{equation*}
			has a unique solution $({\bf z}, \delta) \in \Rsig \times \R$, which in fact is positive.
		\end{enumerate}
	\end{lemma}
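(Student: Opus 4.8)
The plan is to transfer to $-D{F}({\bf y})$ all the structure of $-D\Phi$ obtained in Lemma~\ref{lm-Phi}, through the substitution ${\bf x}=e^{\bf y}\in\Rsigpp$. Since ${\cal A}$ is ${\boldsymbol \sigma}$-strictly nonnegative we have $\Phi({\bf x})>0$ by Lemma~\ref{lm-Phi}(i), so ${F}=\log\Phi(e^{\cdot})$ is well defined and differentiable, and the chain rule gives
\[
D {F}({\bf y})=\diag\!\big(\Phi({\bf x})^{[-1]}\big)\,D\Phi({\bf x})\,\diag({\bf x}),\qquad {\bf x}=e^{\bf y}.
\]
Because $\diag(\Phi({\bf x})^{[-1]})$ and $\diag({\bf x})$ are positive diagonal matrices, $-D{F}({\bf y})$ is a $Z$-matrix (using that $-D\Phi({\bf x})$ is, by Lemma~\ref{lm-Phi}), it shares the off-diagonal zero pattern of $-D\Phi({\bf x})$ and so inherits irreducibility whenever that is available, and, since $\diag({\bf x})({\bf p}^{[-1]}\otimes{\bf 1})={\bf p}^{[-1]}\otimes{\bf x}$, combining with \eqref{eq-DPhi-px} yields the scaling identity
\[
-D{F}({\bf y})\,({\bf p}^{[-1]}\otimes{\bf 1})=\left(1-\sum_{i=1}^d|\sigma_i|/p_i\right){\bf 1}.
\]

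Parts (i) and (ii) then follow directly. Under Assumption~\ref{ass-str-nonneg} the right-hand side above is strictly positive, so $-D{F}({\bf y})$ is a $Z$-matrix with $-D{F}({\bf y}){\bf v}>0$ at ${\bf v}={\bf p}^{[-1]}\otimes{\bf 1}>0$; by Lemma~\ref{lm-ZM}(ii) it is a nonsingular $M$-matrix. Under Assumption~\ref{ass-weak-irr} the right-hand side vanishes, so ${\bf p}^{[-1]}\otimes{\bf 1}>0$ lies in $\ker(-D{F}({\bf y}))$; since $-D\Phi({\bf x})$ is irreducible by Lemma~\ref{lm-Phi}(iii), so is $-D{F}({\bf y})$, and Lemma~\ref{lm-irr-singular-M} then identifies $-D{F}({\bf y})$ as an irreducible singular $M$-matrix.

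For (iii) I would take any $({\bf z},\delta)$ with $D{F}({\bf y})^\top{\bf z}+\delta\nabla g({\bf y})=0$ and ${\bf 1}^\top{\bf z}=0$ and show ${\bf z}=0$, $\delta=0$. Under Assumption~\ref{ass-str-nonneg}, $M:=-D{F}({\bf y})$ is a nonsingular $M$-matrix, hence ${\bf z}=-\delta\,(D{F}({\bf y})^\top)^{-1}\nabla g({\bf y})$; substituting into ${\bf 1}^\top{\bf z}=0$ gives $\delta\,(M^{-1}{\bf 1})^\top\nabla g({\bf y})=0$, and since $M^{-1}\ge 0$ has positive diagonal we have $M^{-1}{\bf 1}>0$, while $\nabla g({\bf y})>0$ by \eqref{nabla g}, so the scalar factor is positive and $\delta=0$, whence ${\bf z}=0$. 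Under Assumption~\ref{ass-weak-irr}, premultiplying the equation by the right null vector ${\bf v}={\bf p}^{[-1]}\otimes{\bf 1}$ of $D{F}({\bf y})$ and using \eqref{eq-p-g} gives $\delta\sum_{i=1}^d p_i^{-1}=0$, so $\delta=0$; then ${\bf z}\in\ker D{F}({\bf y})^\top$, which is one-dimensional and spanned by a strictly positive vector since $-D{F}({\bf y})^\top$ is, by (ii) and transposition, again an irreducible singular $M$-matrix, so ${\bf 1}^\top{\bf z}=0$ forces ${\bf z}=0$.

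Finally, for (iv), the nonsingularity of the bordered matrix coming from the Assumption~\ref{ass-str-nonneg} case of (iii) gives existence and uniqueness of $({\bf z},\delta)$; solving $M^\top{\bf z}=\delta\,\nabla g({\bf y})$ gives ${\bf z}=\delta\,(M^{-1})^\top\nabla g({\bf y})$, and $(M^{-1})^\top\nabla g({\bf y})>0$ by the same positive-diagonal argument, so $\langle{\bf 1},{\bf z}\rangle=1$ forces $\delta>0$ and therefore ${\bf z}>0$. The chain-rule identity and the $M$-matrix bookkeeping are routine; the delicate point is the singular case of (iii), where I rely on $\ker D{F}({\bf y})^\top$ being one-dimensional and sign-definite — this is Perron--Frobenius applied to the irreducible singular $M$-matrix $-D{F}({\bf y})^\top$, so I should first make sure transposition preserves both irreducibility and the singular-$M$-matrix structure before invoking Lemma~\ref{lm-irr-singular-M}.
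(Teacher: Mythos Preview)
Your proof is correct and follows essentially the same route as the paper: the chain-rule identity $DF({\bf y})=\diag(\Phi({\bf x}))^{-1}D\Phi({\bf x})\diag({\bf x})$, the scaling relation $-DF({\bf y})({\bf p}^{[-1]}\otimes{\bf 1})=(1-\sum_i|\sigma_i|/p_i){\bf 1}$, and the $M$-matrix bookkeeping via Lemmas~\ref{lm-ZM}--\ref{lm-irr-singular-M}. The only minor tactical difference is that in parts (iii)--(iv) under Assumption~\ref{ass-str-nonneg} the paper left-multiplies by $({\bf p}^{[-1]}\otimes{\bf 1})^\top$ uniformly (which in (iv) immediately gives the explicit value $\delta=(1-\sum_i|\sigma_i|/p_i)\big/\sum_i p_i^{-1}>0$), whereas you invert $-DF({\bf y})$ and argue positivity through the positive diagonal of $M^{-1}$; both arguments are valid.
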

	\begin{proof}
		Let ${\bf y} \in \mathbb{R}^{\boldsymbol \sigma}$ and ${\bf x} = e^{{\bf y}}$. By simple calculation, we have
		\begin{equation}\label{eq-grad-F}
			D {F}({\bf y}) 
			= \diag\left(\Phi(e^{{\bf y}})\right)^{-1} D \Phi(e^{{\bf y}})\diag(e^{{\bf y}})
			= \diag\left(\Phi({\bf x})\right)^{-1} D \Phi({\bf x})\diag({\bf x}).
		\end{equation}
		From Lemma~\ref{lm-Phi}(i), we know that $\Phi({\bf x})>0$. According to \eqref{eq-grad-F} and \eqref{eq-DPhi-px}, 
		\begin{equation}\label{eq-DF-p}
			\textstyle D {F}({\bf y}) ({\bf  p}^{[-1]} \otimes {\bf 1}) =  \diag\left(\Phi({\bf x})\right)^{-1} D \Phi({\bf x})
			({\bf  p}^{[-1]} \otimes {\bf x})
			= (\sum_{i=1}^{d}|\sigma_i| / p_i - 1){\bf 1}.
		\end{equation}
		
		We first address items (i) and (ii). 
		If Assumption~\ref{ass-str-nonneg} holds, Lemma~\ref{lm-Phi}(ii) implies that $-D\Phi({\bf x})$ is a nonsingular $M$-matrix. Combining this with equality (\ref{eq-grad-F}) yields item~(i). Similarly, under Assumption~\ref{ass-weak-irr}, item~(ii) follows directly from equality (\ref{eq-grad-F}) and Lemma~\ref{lm-Phi}(iii).
		
		Next, we turn to prove item (iii).
		Let $({\bf z}, \delta)\in \Rsig \times \mathbb{R}$ satisfying
		\begin{equation}
			\label{eq-multi-0}
			D {F}({\bf y})^\top {\bf z} + \delta \nabla  g({\bf y}) = 0
			\quad \text{and}\quad {\bf z}^\top \boldsymbol{1} = 0.
		\end{equation}
		It suffices to show that ${\bf z} = 0$ and $\delta = 0$.
		Indeed, left-multiplying both sides of the first equality in \eqref{eq-multi-0} by $({\bf  p}^{[-1]} \otimes {\bf 1})^{\top}$ and using (\ref{eq-p-g}) and (\ref{eq-DF-p}), we can obtain
		\begin{equation}\label{eq-delta}
		\textstyle	(\sum_{i=1}^{d}|\sigma_i| / p_i - 1){\bf 1}^{\top} {\bf z} + \delta \sum_{i=1}^{d}p_{i}^{-1} = 0.
		\end{equation}
		Then, $\delta = 0$, which in turn implies $D {F}({\bf y})^\top {\bf z} = 0$.
		
		To show ${\bf z} = 0$, we proceed in the following two steps:
		\begin{itemize}
			\item Under Assumption~\ref{ass-str-nonneg}: $-D {F}({\bf y})$ is a nonsingular $M$-matrix from item (i). This, together with $D {F}({\bf y})^\top {\bf z} = 0$, implies ${\bf z}=0$.
			\item Under Assumption~\ref{ass-weak-irr}: $-D {F}({\bf y})$ is an irreducible singular $M$-matrix from item (ii). Combining with $ (-D {F}({\bf y}))^\top {\bf z} = 0$, and invoking Lemma~\ref{lm-irr-nonsing-M}, we derive that ${\bf z} = \alpha {\bf {\bar z}}$ for some ${\bf {\bar z}} \in \Rsigpp$ and $\alpha \in \mathbb{R}$. From the second equality in \eqref{eq-multi-0}, it follows that $0 = {\bf z}^{\top} {\bf 1} = \alpha {\bf {\bar z}}^{\top}{\bf 1}$. Thus, $\alpha = 0$ and ${\bf z}=0$.
		\end{itemize}
		
		Last, we consider item (iv). According to item (iii), there exists a unique pair $({\bf z}, \delta) \in \Rsig \times \mathbb{R}$ such that
		\begin{equation}\label{eq-nl-1}
			D {F}({\bf { y}})^\top {\bf  z} + \delta \nabla g({\bf y}) = 0\quad\text{and}\quad
			\  {\bf 1}^{\top} {\bf  z} = 1.
		\end{equation}
		Observe that (\ref{eq-delta}) follows from the first equality in \eqref{eq-multi-0}, which remains valid here.
		Combining this with ${\bf 1}^\top {\bf z} =1$, we then obtain that 
		\begin{equation*}
		\textstyle	\delta = (1 - \sum_{i=1}^{d}|\sigma_i| / p_i)/\sum_{i=1}^{d}p_{i}^{-1}>0.
		\end{equation*}
		From item (i), it follows that $- D {F}({\bf { y}})$ is a nonsingular $M$-matrix. According to this fact and the first equality in \eqref{eq-nl-1}, we have that
		$
\textstyle		{\bf  z} = \delta \left(- D {F}({\bf { y}})^\top\right)^{-1} \nabla g({\bf y})>0,
		$
		where the positivity holds because of $\delta>0$, $\nabla g({\bf y})>0$ (see (\ref{nabla g})), and the fact that $[- DF({\bf  y}) ]^{-1} \ge 0$ (see Lemma~\ref{lm-ZM}).
		This completes the proof.
	\end{proof}

	\subsection{Convex optimization reformulation under Assumption~\ref{ass-str-nonneg}}
	\label{subsec-minmax-str-nonneg}
	In this subsection, under Assumption~\ref{ass-str-nonneg},
	we consider the constrained optimization problem:
	\begin{equation}\label{opt-x-str-nonneg}
		\begin{split}
			&\textstyle \min\limits_{{\bf x} \in {\mathbb R}_{++}^{{\boldsymbol \sigma}}} \left\{ 
			\phi({\bf x}) \coloneq \max_{i\in [d],j_i\in [n_i]} \Phi_{i, j_i}({\bf x}) 
			\right\}
			\\ &\mbox{s.t.} \quad c({\bf x})  \coloneq  \|{\bf x}_1\|_{p_1}\cdots \|{\bf x}_d\|_{p_d}\le 1,
		\end{split}
	\end{equation}
	and its reformulation:
	\begin{equation}\label{opt-conv-str-nonneg}
		\begin{split}
			& \min_{({\bf y},\, \mu) \in {\mathbb R}^{{\boldsymbol \sigma}} \times \mathbb{R}} \mu  \quad \mbox{s.t.} \quad   {F}({\bf y}) -  \mu{\bf 1} \le 0 \quad \mbox{and} \quad g({\bf y}) \le 0,
		\end{split}
	\end{equation}
	where $\Phi$, $F$ and $g$ are defined by \eqref{eq-Phi-def} and (\ref{eq-F-def}), respectively. From Remark~\ref{rm-Fij-conv}, the functions $F_{i,j_i}$'s and $g$ are convex. Therefore, problem (\ref{opt-conv-str-nonneg}) is a convex program.
	
	\begin{remark}\label{rm-equi-str-nonneg}
		If $({\bf { y}}, \mu) \in {\mathbb R}^{{\boldsymbol \sigma}} \times \mathbb{R}$ is a solution to (\ref{opt-conv-str-nonneg}), then ${\bf x} = e^{\bf y}$ is a solution to (\ref{opt-x-str-nonneg}) and $\phi({\bf x}) = e^{\mu}$.
		Conversely, if ${\bf x}$ is a solution to (\ref{opt-x-str-nonneg}), then $({\bf { y}}, \mu) = (\log {\bf x}, \log {\phi({\bf x})})$ is a solution to (\ref{opt-conv-str-nonneg}).
	\end{remark}
	
	Now, we establish the connection between the $(\bssig, {\bf p})$-spectral problem and the convex optimization problem introduced above.
	
	\begin{theorem}\label{th-str-nonneg}
        Suppose that Assumption~\ref{ass-str-nonneg} holds. Let ${\barf x} \in \Rsigpp$ be the unique positive $\sigp$-eigenvector of ${\mathcal A}$, and ${\barf y} = \log {\barf x}$. Then it holds that:
		\begin{enumerate}[{\rm (i)}]
			\item Problem (\ref{opt-x-str-nonneg}) has a unique solution ${\barf x}$ with optimal value $\phi({
				\bf \bar x}) = r^{\sigp}({\mathcal A})$;
			
			\item Problem (\ref{opt-conv-str-nonneg}) has a unique solution $({\barf y}, \bar \mu) = \left(\log \barf{x},\, \log r^{\sigp}({\mathcal A}) \right)$.
		\end{enumerate}
	\end{theorem}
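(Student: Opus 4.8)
The plan is to establish item~(ii) directly for the convex program~\eqref{opt-conv-str-nonneg} and then read off item~(i) via the bijection of Remark~\ref{rm-equi-str-nonneg}. Write $r$ for $r^{\sigp}(\mathcal{A})$. Under Assumption~\ref{ass-str-nonneg} we have $\rho(A)<1$ by Lemma~\ref{lm-eqi-rA1}, so Lemma~\ref{pf-sp}(i),(iii) guarantees that ${\barf x}$ is the unique positive $\sigp$-eigenvector of $\mathcal{A}$ and that its eigenvalue equals $r$; hence $\Phi({\barf x}) = r{\bf 1}$ and $c({\barf x}) = 1$. Consequently $F({\barf y}) = \log\Phi({\barf x}) = {\bar\mu}{\bf 1}$ and $g({\barf y}) = \log c({\barf x}) = 0$, so $({\barf y},{\bar\mu})$ is feasible for~\eqref{opt-conv-str-nonneg}, and moreover every $F$-constraint as well as the $g$-constraint is active there.

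Next I would verify the KKT conditions at $({\barf y},{\bar\mu})$. By Remark~\ref{rm-Fij-conv} each $F_{i,j_i}$ and $g$ is convex, and by Lemma~\ref{lm-Phi}(i) these functions are smooth on $\mathbb{R}^{\boldsymbol\sigma}$, so~\eqref{opt-conv-str-nonneg} is a smooth convex program for which KKT is sufficient for global optimality. With multipliers ${\bf z}\ge 0$ for $F({\bf y})-\mu{\bf 1}\le 0$ and $\delta\ge 0$ for $g({\bf y})\le 0$, stationarity in $\mu$ reads ${\bf 1}^\top{\bf z}=1$ and stationarity in ${\bf y}$ reads $DF({\barf y})^\top{\bf z} + \delta\nabla g({\barf y})=0$, while complementary slackness is automatic since both constraints are active. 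Lemma~\ref{lm-F}(iv) supplies exactly such a pair $({\bf z},\delta)$ — unique, with ${\bf z}>0$ and $\delta>0$. Hence $({\barf y},{\bar\mu})$ is a global minimizer of~\eqref{opt-conv-str-nonneg} and its optimal value is ${\bar\mu}=\log r$.

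For uniqueness, let $({\bf y}^*,\mu^*)$ be any optimal solution, so $\mu^*={\bar\mu}=\log r$, and put ${\bf d}={\bf y}^*-{\barf y}$. Convexity of the feasible set makes the whole segment $\bigl({\bf y}(t),\log r\bigr)$ with ${\bf y}(t)={\barf y}+t{\bf d}$, $t\in[0,1]$, feasible; since $F_{i,j_i}({\barf y})=\log r$ and $g({\barf y})=0$ are then the maxima over $[0,1]$ of $t\mapsto F_{i,j_i}({\bf y}(t))$ and $t\mapsto g({\bf y}(t))$, the right derivatives at $t=0$ are nonpositive, i.e.\ $\nabla F_{i,j_i}({\barf y})^\top{\bf d}\le 0$ for all $i,j_i$ and $\nabla g({\barf y})^\top{\bf d}\le 0$. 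Pairing the identity $DF({\barf y})^\top{\bf z}+\delta\nabla g({\barf y})=0$ with ${\bf d}$, expanding $DF({\barf y})^\top{\bf z}=\sum_{i,j_i}z_{i,j_i}\nabla F_{i,j_i}({\barf y})$, and using ${\bf z}>0$, $\delta>0$, we conclude that every summand vanishes, so $DF({\barf y}){\bf d}=0$; since $-DF({\barf y})$ is a nonsingular $M$-matrix by Lemma~\ref{lm-F}(i), it is invertible and ${\bf d}=0$, i.e.\ ${\bf y}^*={\barf y}$. This proves~(ii), and item~(i) follows from Remark~\ref{rm-equi-str-nonneg}: ${\barf x}=e^{\barf y}$ solves~\eqref{opt-x-str-nonneg} with value $\phi({\barf x})=e^{\bar\mu}=r$, and any other solution ${\bf x}'$ of~\eqref{opt-x-str-nonneg} would give an optimal pair $(\log{\bf x}',\log\phi({\bf x}'))$ of~\eqref{opt-conv-str-nonneg}, hence $({\barf y},{\bar\mu})$, so ${\bf x}'={\barf x}$.

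The routine parts are feasibility and recognizing Lemma~\ref{lm-F}(iv) as the KKT system; the delicate part is uniqueness, where the linearity of the objective $\mu$ rules out any strict-convexity shortcut, and one must combine the strict positivity of $({\bf z},\delta)$ with the nonsingularity of $-DF({\barf y})$ through the directional-derivative inequalities above. An equivalent route for this step is to invoke strong duality (Slater holds for~\eqref{opt-conv-str-nonneg}, since $g({\bf y})<0$ is attainable and $\mu$ is unbounded above): the pair from Lemma~\ref{lm-F}(iv) is then the unique optimal dual solution, every optimal primal point satisfies complementary slackness with it, forcing $F({\bf y}^*)={\bar\mu}{\bf 1}$ and $g({\bf y}^*)=0$, whence $e^{{\bf y}^*}$ satisfies~\eqref{eq-eig} with $\lambda=r$ and $c(e^{{\bf y}^*})=1$, so it is a positive $\sigp$-eigenvector by Proposition~\ref{prop-eig}(ii) and thus equals ${\barf x}$ by the uniqueness in Lemma~\ref{pf-sp}(iii).
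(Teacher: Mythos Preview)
Your proof is correct. The paper takes a route closer to your ``alternative'' at the end: it shows that \emph{every} KKT point of~\eqref{opt-conv-str-nonneg} must have all constraints active, because by Lemma~\ref{lm-F}(iv) the stationarity system~\eqref{eq-DF-z-conv} has, at any ${\bf y}$, a unique solution $({\bf z}_{\bf y},\delta_{\bf y})$ and it is strictly positive; complementary slackness then forces $F({\bf y})=\mu{\bf 1}$ and $g({\bf y})=0$, which via Remark~\ref{rm-system-Phis-c} is exactly the positive eigenpair condition, so uniqueness of the KKT point follows from uniqueness of the eigenpair in Lemma~\ref{pf-sp}(iii). Your primary argument instead separates existence (verify KKT at the known eigenvector $({\barf y},\bar\mu)$) from uniqueness (the directional-derivative trick along the segment, combined with ${\bf z}>0$, $\delta>0$ and the nonsingularity of $DF({\barf y})$ from Lemma~\ref{lm-F}(i)). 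The paper's approach is slightly slicker because it handles existence and uniqueness simultaneously and never needs to analyze directional derivatives; on the other hand, your directional-derivative argument has the virtue of being entirely local at ${\barf y}$ and not re-invoking the Perron--Frobenius uniqueness a second time---it extracts uniqueness purely from the invertibility of $DF({\barf y})$.
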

	\begin{proof}
		Let the Lagrangian function of problem (\ref{opt-conv-str-nonneg}) be 
		$$
		{\tilde{L}}({\bf y}, \mu, {\bf z}, \delta) = \mu + \langle {\bf z}, {F}({\bf y}) -  \mu{\bf 1}\rangle + \delta g({\bf y}), \quad \forall ({\bf y}, \mu, {\bf z}, \delta) \in \Rsig \times \mathbb{R} \times \Rsig \times \R.
		$$
		Since Slater's condition holds for problem (\ref{opt-conv-str-nonneg}), the pair $({\bf { y}}, \mu) \in {\mathbb R}^{{\boldsymbol \sigma}} \times \mathbb{R}$ is a solution to the problem (\ref{opt-conv-str-nonneg}) if and only if there exists a Lagrange multiplier $({\bf  z}, \delta) \in {\mathbb R}_{+}^{{\boldsymbol \sigma}}\times \R_{+}$ satisfying KKT conditions:
		\begin{align}
			& D {F}({\bf { y}})^\top {\bf  z} + \delta \nabla g({\bf y}) = 0,
			\  {\bf 1}^\top {\bf  z} = 1, \label{eq-DF-z-conv}
			\\ & {F}({\bf y}) -  \mu{\bf 1}\le 0, 
			\quad 
			{\bf z}^{\top}({F}({\bf y}) -  \mu{\bf 1}) = 0, \label{eq-z-F-conv}
			\\ & g({\bf y}) \le 0 \quad \mbox{and}\quad \delta g({\bf y}) = 0. \label{eq-slack-g}
		\end{align}
		
		From Lemma~\ref{lm-F}(iv), for every ${\bf { y}} \in {\mathbb R}^{{\boldsymbol \sigma}}$, the linear system (\ref{eq-DF-z-conv}) has a unique solution $({\bf  z}_{\bf  y}, \delta_{\bf y}) \in \Rsig \times \R$, which is positive. Let $({\bf z}, \delta) = ({\bf z}_{\bf y}, \delta_{\bf y})$, the KKT conditions (\ref{eq-DF-z-conv})-(\ref{eq-slack-g}) reduce to
		\begin{equation}\label{eq-F-f-conv}
			{F}({\bf { y}}) - {\mu} {\bf 1} = 0 \quad \mbox{and} \quad g({\bf y}) = 0.
		\end{equation}
		According to the transformation ${\bf  x} = e^{\bf  y}$ and $\lambda = e^{\mu}$, and the definition of $F$ and $g$ in (\ref{eq-F-def}), the system (\ref{eq-F-f-conv}) is equivalent to 
		\begin{equation*}\label{eq-Phi-lambda-conv}
			\Phi({\bf  x}) - \lambda {\bf 1} = 0 \quad \mbox{and} \quad c(\bf x) = 1.
		\end{equation*}	
		From Remark~\ref{rm-system-Phis-c}, this is further equivalent to the fact that $(\lambda, {\bf x})$ is a positive $\sigp$-eigenpair of ${\mathcal A}$. By Lemmas~\ref{pf-sp} and \ref{lm-eqi-rA1}, ${\mathcal A}$ has a unique positive $\sigp$-eigenpair $(\bar \lambda, {\bf \bar x})\in \Rpp \times \Rsigpp$ and $\bar \lambda = r^{\sigp}({\mathcal A})$. Combining this with the equivalences above, we conclude item (ii).
		Item (i) then follows from item (ii) and Remark~\ref{rm-equi-str-nonneg}.
	\end{proof}

	\subsection{Convex optimization reformulation under Assumption~\ref{ass-weak-irr}}
	\label{subsec-minmax-weak-irr}
	In this section, under Assumption~\ref{ass-weak-irr}, we consider the optimization problem:
	\begin{equation}\label{opt-x-weak-irr}
		\begin{split}
			&\textstyle \min\limits_{{\bf x} \in {\mathbb R}_{++}^{{\boldsymbol \sigma}}} \left\{ 
			\phi({\bf x}) \coloneq \max_{i\in [d],j_i\in [n_i]} \Phi_{i, j_i}({\bf x}) 
			\right\},
		\end{split}
	\end{equation}
	and its reformulation:
	\begin{equation}\label{opt-conv-weak-irr}
		\begin{split}
			& \min_{({\bf y},\, \mu) \in {\mathbb R}^{{\boldsymbol \sigma}} \times \mathbb{R}} \mu  \quad \mbox{s.t.} \quad   {F}({\bf y}) -  \mu{\bf 1} \le 0,
		\end{split}
	\end{equation}
	where $\Phi$ and $F$ are defined by \eqref{eq-Phi-def} and (\ref{eq-F-def}), respectively.
	
	\begin{remark}\label{rm-equi-weak-irr}
		$({\bf { y}}, \mu) \in {\mathbb R}^{{\boldsymbol \sigma}} \times \mathbb{R}$ is a solution to (\ref{opt-conv-weak-irr}) if and only if ${\bf x} = e^{\bf y}$ is a solution to (\ref{opt-x-weak-irr}) and $\phi({\bf x}) = e^{\mu}$.
	\end{remark}
	
	We now establish the relationship between the \( ({\boldsymbol \sigma}, {\bf p}) \)-spectral problem and the optimization problems \eqref{opt-x-weak-irr} and \eqref{opt-conv-weak-irr}. 
	
	\begin{theorem}\label{thr--weak-irr}
        Suppose that Assumption~\ref{ass-weak-irr} holds. Let ${\barf x} \in \Rsigpp$ be the unique positive $\sigp$-eigenvector of ${\mathcal A}$, and ${\barf y} = \log {\barf x}$. Then it holds that:
		\begin{enumerate}[{\rm (i)}]
			\item The solution set ${S}^*_{1}$ of problem (\ref{opt-x-weak-irr}) is nonempty, which is given by
			\begin{equation*}\label{sol-x}
				S^*_{1} = \{(t^{1/p_1} {\barf x}_{1}, \ldots, t^{1/p_d}{\barf x}_{d})\in \Rsigpp : 
				t\in \mathbb{R}_{++}
				\},
			\end{equation*}
			and the optimal value of problem (\ref{opt-x-weak-irr}) is $\phi({
				\bf \bar x}) = r^{\sigp}({\mathcal A})$.
			
			\item The solution set $S^*_{2}$ of problem (\ref{opt-conv-weak-irr}) is nonempty, which is given by
			\begin{equation}\label{sol-y}
				S^*_{2} = \{ ({\bf {\bar y}} - \beta ({\bf  p}^{[-1]} \otimes {\bf 1}), {\bar \mu} ) \in \Rsig \times \mathbb{R}: \beta \in \mathbb{R}\},
			\end{equation}
			where ${\bar \mu} = \log r^{\sigp}({\mathcal A})$.
		\end{enumerate}
	\end{theorem}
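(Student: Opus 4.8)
The plan is to run the same KKT-based argument as in the proof of Theorem~\ref{th-str-nonneg}, now adapted to the weakly irreducible setting where $-D{F}$ is \emph{singular} rather than invertible. By Remark~\ref{rm-Fij-conv}, problem \eqref{opt-conv-weak-irr} is convex, and it satisfies Slater's condition (for any fixed ${\bf y}$, any $\mu>\max_{i\in[d],j_i\in[n_i]}{F}_{i,j_i}({\bf y})$ gives a strictly feasible point). Hence $({\bf y},\mu)\in\Rsig\times\R$ solves \eqref{opt-conv-weak-irr} if and only if there is a multiplier ${\bf z}\in\Rsigp$ with
\[
D{F}({\bf y})^\top{\bf z}=0,\quad {\bf 1}^\top{\bf z}=1,\quad {F}({\bf y})-\mu{\bf 1}\le 0,\quad {\bf z}^\top({F}({\bf y})-\mu{\bf 1})=0.
\]
The next step is to show that the first two conditions force ${\bf z}>0$. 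By Lemma~\ref{lm-F}(ii), $-D{F}({\bf y})$ is an irreducible singular $M$-matrix; transposition preserves being a $Z$-matrix, irreducibility, and the singular $M$-matrix property, so $-D{F}({\bf y})^\top$ is one as well, and Lemma~\ref{lm-irr-singular-M} applied to $-D{F}({\bf y})^\top$ yields a unique solution ${\bf z}$ of $D{F}({\bf y})^\top{\bf z}=0$, ${\bf 1}^\top{\bf z}=1$, which is moreover positive. With ${\bf z}>0$ and ${F}({\bf y})-\mu{\bf 1}\le 0$, complementary slackness then collapses the system to ${F}({\bf y})-\mu{\bf 1}=0$. Through the substitution ${\bf x}=e^{\bf y}$, $\lambda=e^\mu$ and the definition \eqref{eq-F-def}, this is precisely $\Phi({\bf x})-\lambda{\bf 1}=0$, i.e.\ the eigen-system \eqref{eq-eig}.

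Next I would invoke Proposition~\ref{prop-eig}(i): the positive solutions of \eqref{eq-eig} are exactly ${\bf x}=(t^{1/p_1}{\barf x}_1,\dots,t^{1/p_d}{\barf x}_d)$ with $\lambda=\bar\lambda\, t^{\sum_{i=1}^d\nu_i/p_i-1}$, $t>0$, where $(\bar\lambda,{\barf x})$ is the positive $\sigp$-eigenpair, unique by Lemmas~\ref{pf-sp} and~\ref{lm-eqi-rA1}, which also give $\bar\lambda=r^{\sigp}({\mathcal A})$. The decisive point, and the only quantitative use of Assumption~\ref{ass-weak-irr}, is that $\sum_{i=1}^d\nu_i/p_i=\sum_{i=1}^d|\sigma_i|/p_i=1$, so the exponent of $t$ vanishes and $\lambda\equiv\bar\lambda=r^{\sigp}({\mathcal A})$ along the whole ray. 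Therefore $({\bf y},\mu)$ solves \eqref{opt-conv-weak-irr} iff $\mu=\bar\mu:=\log r^{\sigp}({\mathcal A})$ and $e^{\bf y}$ lies on this ray; writing $t=e^{-\beta}$ and taking logarithms blockwise turns ${\bf x}_i=t^{1/p_i}{\barf x}_i$ into ${\bf y}_i={\barf y}_i-\beta p_i^{-1}{\bf 1}$, that is ${\bf y}={\barf y}-\beta({\bf p}^{[-1]}\otimes{\bf 1})$, which is exactly \eqref{sol-y}; this proves (ii). Part (i) then follows from (ii) and the correspondence in Remark~\ref{rm-equi-weak-irr}: projecting $S^*_2$ onto the ${\bf x}$-coordinate gives precisely $S^*_1=\{(t^{1/p_1}{\barf x}_1,\dots,t^{1/p_d}{\barf x}_d):t>0\}$, nonempty, with common objective value $\phi({\barf x})=e^{\bar\mu}=r^{\sigp}({\mathcal A})$.

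The main obstacle is conceptual rather than computational. In Theorem~\ref{th-str-nonneg} the multiplier ${\bf z}$ was pinned down by invertibility of $-D{F}({\bf y})$ and the eigenvalue by the normalization $g({\bf y})\le 0$, producing a single minimizer; here $-D{F}({\bf y})$ is only a singular $M$-matrix, so the multiplier must instead be extracted from the one-dimensional null space of $-D{F}({\bf y})^\top$, and, the normalization constraint being absent, the eigenvalue is forced to be constant along the scaling ray. Making this precise requires (a) checking that the transpose of an irreducible singular $M$-matrix is again one, so that Lemma~\ref{lm-irr-singular-M} is applicable, and (b) carefully reconciling the three parametrizations — the scalar $t$ of Proposition~\ref{prop-eig}, the shift $\beta$ in \eqref{sol-y}, and the rescaling of ${\barf x}$ — so that the stated solution sets come out exactly.
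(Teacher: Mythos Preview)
Your proposal is correct and follows essentially the same route as the paper: convexity plus Slater give the KKT characterization, Lemma~\ref{lm-F}(ii) makes $-D{F}({\bf y})$ an irreducible singular $M$-matrix so the stationarity conditions force ${\bf z}>0$, complementary slackness then collapses to ${F}({\bf y})=\mu{\bf 1}$, and Proposition~\ref{prop-eig}(i) together with $\sum_i|\sigma_i|/p_i=1$ parametrizes the solution ray, with item~(i) read off via Remark~\ref{rm-equi-weak-irr}. If anything you are slightly more explicit than the paper in justifying why the null-space argument applies to $-D{F}({\bf y})^\top$ rather than $-D{F}({\bf y})$ itself, and in correctly invoking Lemma~\ref{lm-irr-singular-M} for that step.
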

	\begin{proof}
		Denote the Lagrangian function of problem (\ref{opt-conv-weak-irr}) as 
		$$
		L({\bf y}, \mu, {\bf z}) = \mu + \langle {\bf z}, {F}({\bf y}) -  \mu{\bf 1}\rangle, \quad \forall ({\bf y}, \mu, {\bf z}) \in \Rsig \times \mathbb{R} \times \Rsig.
		$$
		Since Slater's condition holds for problem (\ref{opt-conv-weak-irr}), the KKT conditions are necessary and sufficient for optimality. Thus, a pair $({\bf { y}}, \mu) \in {\mathbb R}^{{\boldsymbol \sigma}} \times \mathbb{R}$ solves (\ref{opt-conv-weak-irr}) if and only if there exists a Lagrange multiplier ${\bf  z} \in {\mathbb R}_{+}^{{\boldsymbol \sigma}}$ such that
		\begin{align}
			& D {F}({\bf { y}})^\top {\bf  z}  = 0,
			\quad {\bf 1}^{\top} {\bf  z} = 1, \label{eq-DF-z}
			\\ & {\bf z}^{\top}({F}({\bf y}) -  \mu{\bf 1}) = 0, \label{eq-zF}
			\\ & {\bf z}\ge 0 \quad \mbox{and} \quad {F}({\bf y}) -  \mu{\bf 1}\le 0. \label{eq-z-F}
		\end{align}
		
		By Lemma~\ref{lm-F}(ii), for any ${\bf { y}} \in {\mathbb R}^{{\boldsymbol \sigma}}$, the matrix $-D F({\bf  y})$ is an irreducible singular $M$-matrix. Applying Lemma~\ref{lm-irr-nonsing-M}, there exists a unique ${\bf z}_{\bf y}\in \Rsigpp$ satisfying \eqref{eq-DF-z}. Substituting ${\bf z} = {\bf z}_{\bf y}$ into the KKT conditions \eqref{eq-DF-z}-\eqref{eq-z-F}, we can obtain
		\begin{equation}\label{eq-F-f}
			{F}({\bf { y}}) - {\mu} {\bf 1} = 0.
		\end{equation}
		Applying the transformation ${\bf  x} = e^{\bf  y}$ and $\lambda = e^{\mu}$, and the definition of $F$ in (\ref{eq-F-def}), \eqref{eq-F-f} is equivalent to 
		\begin{equation}\label{eq-Phi-lambda}
			\Phi({\bf  x}) - \lambda {\bf 1} = 0.
		\end{equation}	
		
		From Assumption~\ref{ass-weak-irr} and Lemmas~\ref{pf-sp} and \ref{lm-eqi-rA1}, tensor ${\mathcal A}$ has a unique positive $\sigp$-eigenpair $(\bar \lambda, {\bf \bar x})$ with $\bar \lambda = r^{\sigp}({\mathcal A})$. Furthermore, Proposition~\ref{prop-eig}(i) implies that $(\lambda, {\bf x}) \in \mathbb{R}_{++} \times {\mathbb R}_{++}^{{\boldsymbol \sigma}}$ satisfies (\ref{eq-Phi-lambda}) if and only if there exists a scalar $t>0$ such that 
		\begin{equation*}
			{\bf x} = (t^{1/p_1} {\bf {\bar x}}_{1}, \ldots, t^{1/p_d}{\bf {\bar x}}_{d}) \quad \mbox{and} \quad \lambda = \bar {\lambda} t^{\sum_{i=1}^{d}|\sigma_i|/p_i - 1} = r^{\sigp}({\mathcal A}).
		\end{equation*}	
		Consequently, $({\bf { y}}, \mu)$ solves problem (\ref{opt-conv-weak-irr}) if and only if there exists a scalar $t>0$ such that
		$$
		{\bf y} = \log (t^{1/p_1} {\barf x}_{1}, \ldots, t^{1/p_d}{\barf x}_{d}) = \log {\bf {\bar x}} + (\log t) ({\bf  p}^{[-1]} \otimes {\bf 1}) 
		\quad \mbox{and} \quad \mu = \log  r^{\sigp}({\mathcal A}).
		$$
		This completes the proof of item (ii), which, combined with Remark~\ref{rm-equi-weak-irr}, also establishes item (i).
	\end{proof}

	We conclude this subsection with the following proposition, 
	which plays a pivotal role in the development of a positivity-preserving algorithm for computing the $(\bssig, {\bf p})$-spectral radius, to be considered in the next section. 
	We note that the special case $d=1$ of this result appeared earlier in \cite[Lemma~4]{LGL17} in the context of computing Perron pairs of weakly irreducible nonnegative tensors.
	
	\begin{proposition}\label{pro-level-bdd}
		Let $\phi$ and $c$ be defined in \eqref{opt-x-str-nonneg}.
		Let ${\bf x}^0 \in {\mathbb R}_{++}^{{\boldsymbol \sigma}}$ be such that $c({\bf x}^0) = 1$.
		Suppose that either Assumption~\ref{ass-str-nonneg} or~\ref{ass-weak-irr} holds.
		Then the set 
			\begin{equation*}\label{eq-level}
			W \coloneqq \{{\bf x} \in {\mathbb R}_{++}^{\boldsymbol \sigma} : \phi({\bf x}) \le \phi({\bf x}^0),\, c({\bf x}) = 1\}
		\end{equation*}
		is compact, and it is bounded away from zero in the sense that
		$$\textstyle
		\inf\limits_{{\bf x}\in W}\min\{ x_{i,j_i} : i\in [d], j_i\in [n_i]\} > 0.
		$$ 
	\end{proposition}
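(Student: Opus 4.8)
The plan is to reduce everything to the single inequality
\(\inf_{{\bf x}\in W}\min\{x_{i,j_i}:i\in[d],\,j_i\in[n_i]\}>0\).
Granting it, if \(\min_{i,j_i}x_{i,j_i}\ge\varepsilon>0\) on \(W\), then \(\|{\bf x}_i\|_{p_i}\ge\varepsilon\) for each \(i\), and since \(\prod_i\|{\bf x}_i\|_{p_i}=c({\bf x})=1\) we get \(\|{\bf x}_j\|_{p_j}\le\varepsilon^{-(d-1)}\) for all \(j\); so \(W\) is bounded. Any limit point of \(W\) has all entries \(\ge\varepsilon\), hence lies in \({\mathbb R}^{{\boldsymbol\sigma}}_{++}\), where \(\phi\) and \(c\) are continuous, so \(W\) is closed; thus \(W\) is compact, and the displayed inequality is exactly ``bounded away from zero''. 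So assume for contradiction there is \({\bf x}^k\in W\) with \(\mu_k:=\min_{i,j_i}x^k_{i,j_i}\to0\).

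\textbf{Comparison with the positive eigenvector.} By Lemmas~\ref{pf-sp} and~\ref{lm-eqi-rA1}, under either assumption \({\mathcal A}\) has a unique positive \(\sigp\)-eigenpair \((\bar\lambda,\bar{\bf u})\) with \(\bar{\bf u}\in{\mathbb R}^{{\boldsymbol\sigma}}_{++}\), \(\|\bar{\bf u}_i\|_{p_i}=1\), and \(\bar\lambda=r:=r^{\sigp}({\mathcal A})>0\) (positivity from \({\mathscr A}_{s_i}(\bar{\bf u}^{[{\boldsymbol\sigma}]})=r\,\bar{\bf u}_i^{[p_i-1]}>0\)); also \(M:=\phi({\bf x}^0)>0\) by Lemma~\ref{lm-Phi}(i). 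Put \(\tau^k_i:=\max_{j_i}\bar u_{i,j_i}/x^k_{i,j_i}\), so \(\bar{\bf u}_i\le\tau^k_i{\bf x}^k_i\) componentwise and hence \(\bar{\bf u}^{[{\boldsymbol\sigma}]}\le({\boldsymbol\tau}^k\otimes{\bf x}^k)^{[{\boldsymbol\sigma}]}\). Since \({\mathcal A}\ge0\), each \({\bf x}\mapsto{\mathscr A}_{s_i,j_i}({\bf x}^{[{\boldsymbol\sigma}]})\) is monotone and multihomogeneous with homogeneity matrix \({\bf 1}{\boldsymbol\nu}^\top-I\) (Remark~\ref{rm-multihom}); combining this with \({\mathscr A}_{s_i,j_i}(\bar{\bf u}^{[{\boldsymbol\sigma}]})=r\,\bar u_{i,j_i}^{p_i-1}\) and \({\mathscr A}_{s_i,j_i}(({\bf x}^k)^{[{\boldsymbol\sigma}]})=\Phi_{i,j_i}({\bf x}^k)(x^k_{i,j_i})^{p_i-1}\le M(x^k_{i,j_i})^{p_i-1}\), and then evaluating at an index \(j_i\) attaining \(\tau^k_i\) (where \(x^k_{i,j_i}=\bar u_{i,j_i}/\tau^k_i\), so the factor \(\bar u_{i,j_i}^{p_i-1}\) cancels), we obtain, for every \(i\in[d]\),
\(r\le M\big(\prod_\ell(\tau^k_\ell)^{|\sigma_\ell|}\big)(\tau^k_i)^{-p_i}\).
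With \({\bf a}^k:=(\log\tau^k_1,\dots,\log\tau^k_d)^\top\) this is the linear system \(H{\bf a}^k\le\log(M/r)\,{\bf 1}\), where \(H:=\mathrm{diag}({\bf p})-{\bf 1}{\boldsymbol\nu}^\top=\mathrm{diag}({\bf p})(I-{\bf p}^{[-1]}{\boldsymbol\nu}^\top)\) is a \(Z\)-matrix and \(\rho({\bf p}^{[-1]}{\boldsymbol\nu}^\top)=\sum_i|\sigma_i|/p_i\).

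\textbf{Closing under Assumption~\ref{ass-str-nonneg}, and reducing Assumption~\ref{ass-weak-irr}.} If \(\sum_i|\sigma_i|/p_i<1\), then \(p_i>|\sigma_i|\), \(\rho({\bf p}^{[-1]}{\boldsymbol\nu}^\top)<1\), and \(H{\bf p}^{[-1]}=(1-\sum_i|\sigma_i|/p_i){\bf 1}>0\), so by Lemma~\ref{lm-ZM} \(H\) is a nonsingular \(M\)-matrix with \(H^{-1}\ge0\); multiplying the system by \(H^{-1}\) gives \({\bf a}^k\le\log(M/r)\,H^{-1}{\bf 1}\), a fixed vector, so \(({\bf a}^k)\) is bounded above. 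But if \(i_0\) realises \(\mu_k\), then \(\tau^k_{i_0}\ge(\min_{l,j_l}\bar u_{l,j_l})/\mu_k\to\infty\), so \(a^k_{i_0}\to\infty\) — contradiction; together with the Reduction this proves the proposition under Assumption~\ref{ass-str-nonneg}. Under Assumption~\ref{ass-weak-irr}, \(\sum_i|\sigma_i|/p_i=1\), so \(H\) is an irreducible singular \(M\)-matrix (for \(d=1\), \(H=0\)) with right null vector \({\bf p}^{[-1]}\) and, by Lemma~\ref{lm-irr-singular-M}, positive left null vector \({\bf w}\propto{\boldsymbol\nu}\circ{\bf p}^{[-1]}\); since \(\langle{\bf w},{\bf p}^{[-1]}\rangle>0\), writing \({\bf a}^k=\alpha_k{\bf p}^{[-1]}+{\bf b}^k\) with \({\bf w}^\top{\bf b}^k=0\) is a legitimate splitting, and then \(H{\bf b}^k=H{\bf a}^k\le\log(M/r){\bf 1}\) with \({\bf w}^\top(H{\bf b}^k)=0\). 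As \(\{{\bf d}\in\{{\bf w}\}^\perp:{\bf d}\le\log(M/r){\bf 1}\}\) is bounded and \(H\) restricts to a bijection of \(\{{\bf w}\}^\perp\) (it maps \(\{{\bf w}\}^\perp\) into itself and \(\ker H=\mathbb R{\bf p}^{[-1]}\not\subseteq\{{\bf w}\}^\perp\)), the sequence \(({\bf b}^k)\) is bounded. Hence \(\mu_k\to0\) forces \(\alpha_k\to\infty\), so \(\tau^k_i\to\infty\) and thus \(\min_{j_i}x^k_{i,j_i}\to0\) with \(\log\min_{j_i}x^k_{i,j_i}=-\alpha_k/p_i+O(1)\) for \emph{every} block \(i\).

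\textbf{The remaining, critical step (Assumption~\ref{ass-weak-irr}).} It remains to contradict the last conclusion using \(\sigma\)-weak irreducibility. Writing \({\bf y}^k:=\log{\bf x}^k\), the bound \(\phi({\bf x}^k)\le M\) gives, for every \((i,j_i)\) and every monomial \(\tau\) of \({\mathscr A}_{s_i,j_i}\) (nonnegative coefficient, exponent vector \({\bf c}_\tau\) whose block-\(\ell\) coordinate sum is \(|\sigma_\ell|\) for \(\ell\neq i\) and \(|\sigma_i|-1\) for \(\ell=i\)), the inequality \(\langle{\bf c}_\tau,{\bf y}^k\rangle\le(p_i-1)y^k_{i,j_i}+O(1)\); under the balance condition \(\sum_i|\sigma_i|/p_i=1\) these are tight in the direction \({\bf p}^{[-1]}\). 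Following the propagation-along-edges argument for the digraph of the matrix \(M\) in Definition~\ref{struc-t} (strongly connected by weak irreducibility), as in the proof of \cite[Lemma~4]{LGL17} for \(d=1\), one shows that the set of indices whose \({\bf x}^k\)-components decay at the maximal rate would be a nonempty proper subset that is forward-invariant under that digraph — impossible. I expect this last step — separating the per-block decay rates in the critical regime and extracting the forbidden invariant index set — to be the main obstacle; everything before it is the eigenvector-comparison bookkeeping of the previous paragraphs together with the standard \(M\)-matrix facts in Lemmas~\ref{lm-ZM}--\ref{lm-irr-singular-M}.
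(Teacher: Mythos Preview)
Your eigenvector-comparison route is genuinely different from the paper's and, under Assumption~\ref{ass-str-nonneg}, complete and arguably more self-contained: instead of invoking the uniqueness result Theorem~\ref{th-str-nonneg}(ii) for the convex program (as the paper does to conclude that the level set \(\Omega\) is bounded), you compare directly with the Perron vector via \(\bar{\bf u}\le\boldsymbol\tau^k\otimes{\bf x}^k\), derive the linear system \(H{\bf a}^k\le\log(M/r)\,{\bf 1}\) with \(H=\mathrm{diag}({\bf p})-{\bf 1}\boldsymbol\nu^\top\), and use that \(H\) is a nonsingular \(M\)-matrix to bound \({\bf a}^k=\log\boldsymbol\tau^k\) above. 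The reduction paragraph and this case are correct.

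Under Assumption~\ref{ass-weak-irr} there is a real gap, which you yourself flag. Your analysis yields \(a_i^k=\alpha_k/p_i+O(1)\) with \(\alpha_k\to\infty\), so in every block \emph{some} entry decays like \(e^{-\alpha_k/p_i}\); but you have no control on the \emph{large} entries of \({\bf x}^k\). Consequently the constraint \(c({\bf x}^k)=1\) (via \(\|{\bf x}_i^k\|_{p_i}\ge 1/\tau_i^k\)) gives only \(\alpha_k\sum_i 1/p_i\ge O(1)\), which is vacuous, and your proposed digraph-propagation argument cannot even be set up: the ``indices decaying at the maximal rate'' are not identified, since you have only located \emph{one} small coordinate per block, not resolved the individual decay rates. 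The paper bypasses all of this by working in log coordinates. By Theorem~\ref{thr--weak-irr}(ii) and \cite[Theorem~8.7]{R97convex}, the level set \(\widehat\Omega\) of the convex program \eqref{opt-conv-weak-irr} has recession cone equal to the line \(\mathbb R\,({\bf p}^{[-1]}\otimes{\bf 1},0)\), so any unbounded sequence in \(\widehat W\subset\widehat\Omega\) escapes along \(\beta_0({\bf p}^{[-1]}\otimes{\bf 1},0)\) with \(\beta_0\ne 0\). Then \emph{convexity of} \(g({\bf y})=\log c(e^{\bf y})\), together with \(g({\bf y}^k)=g({\bf y}^0)=0\), gives the two gradient inequalities \(\nabla g({\bf y}^k)^\top({\bf y}^k-{\bf y}^0)\ge 0\) and \(\nabla g({\bf y}^0)^\top({\bf y}^k-{\bf y}^0)\le 0\); dividing by the norm, passing to the limit, and using the identity \(({\bf p}^{[-1]}\otimes{\bf 1})^\top\nabla g\equiv\sum_i 1/p_i\) from \eqref{eq-p-g} forces \(\beta_0\ge 0\) and \(\beta_0\le 0\) simultaneously. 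The key idea you are missing is that the single convex constraint \(g=0\), evaluated at both \({\bf y}^0\) and \({\bf y}^k\), supplies exactly the two-sided control on the escape direction that your one-sided comparison \(\bar{\bf u}\le\boldsymbol\tau^k\otimes{\bf x}^k\) cannot.
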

	\begin{proof}
		Let ${\bf y}^0 = \log {\bf x}^0$ and $\mu_0 = \log \phi({\bf x}^{0})$. Define 
		$$
		\widehat{W} \coloneqq \{({\bf y}, \mu) \in {\mathbb R}^{\boldsymbol \sigma} \times \mathbb{R}: \mu \le \mu_0, F({\bf y}) - \mu {\bf 1} \le 0,\, g({\bf y}) = 0\},
		$$
		where $F$ and $g$ are defined in \eqref{eq-F-def}. Obviously, $({\bf y}^{0}, \mu_0) \in \widehat{W}$, then $\widehat{W} \neq \emptyset$. Moreover, according to the definition of $W$ and $\widehat{W}$, ${\bf x} \in W$ if and only if there exists a scalar $\mu \in \mathbb{R}$ such that $({\bf y}, \mu) \in \widehat{W}$ with ${\bf x} = e^{\bf y}$. If $\widehat{W}$ is bounded, then $W$ is closed, bounded, and bounded away from zero. Hence, in the following, we are committed to proving that $\widehat{W}$ is bounded.

		Indeed, if Assumption~\ref{ass-str-nonneg} holds, then according to Theorem~\ref{th-str-nonneg}(ii), the convex programming (\ref{opt-conv-str-nonneg}) has a unique solution. It follows that the level set 
		$$
		\Omega \coloneqq \{({\bf y}, \mu) \in {\mathbb R}^{\boldsymbol \sigma} \times \mathbb{R}: \mu \le \mu_0, F({\bf y}) - \mu {\bf 1} \le 0,\, g({\bf y}) \le 0\}
		$$
		of \eqref{opt-conv-str-nonneg} is bounded. This implies that ${\widehat W} \subseteq \Omega$ is bounded, too.
		
		Suppose that Assumption~\ref{ass-weak-irr} holds. Consider the level set $\widehat{\Omega}$ of \eqref{opt-conv-weak-irr}:
		$$
		\widehat{\Omega} \coloneqq \{({\bf y}, \mu) \in {\mathbb R}^{\boldsymbol \sigma} \times \mathbb{R}: \mu \le \mu_0, F({\bf y}) - \mu {\bf 1} \le 0\},
		$$
		which is nonempty (since $({\bf y}^{0}, \mu_0)\in \widehat{\Omega}$), closed and convex. Let $S^{*}_{2}$ be defined in (\ref{sol-y}).
		By \cite[Theorem~8.7]{R97convex} and Theorem~\ref{thr--weak-irr}(ii), we obtain that
		\[
		O^{+} {\widehat \Omega} = O^{+}S^{*}_{2}=\{ \beta ({\bf  p}^{[-1]} \otimes {\bf 1}, 0) : \beta \in \mathbb{R}\},
		\]
		where $O^{+} {C}$ denotes the recession cone of a convex set $C\subseteq \Rsig$.
		
		Suppose on the contrary that there exists an unbounded sequence 
		$\{({\bf y}^{k}, \mu_k)\}_{k\ge 1} \subseteq {\widehat{W}} \subseteq \widehat{\Omega}$ such that 
		$({\bf y}^{k}, \mu_{k}) \neq ({\bf y}^{0}, \mu_0)$ for all  $k\ge 1$. 
		Let ${\bf u}^{k}  \coloneq  
		\frac{({\bf y}^{k} - {\bf y}^{0}, \mu_{k} - \mu_0)}
		{\|({\bf y}^{k} - {\bf y}^{0}, \mu_{k} - \mu_0)\|}$ for all  $k\ge 1$.
		Since every accumulated point of $\{{\bf u}^{k}\}$ belongs to $O^{+} {\widehat{\Omega}}$ and $\|{\bf u}^{k}\| = 1$ for all $k \ge 1$, by passing to a subsequence if necessary, we may assume that 
		\begin{equation}\label{eq-beta-def}
			{\bf u}^{k} \rightarrow \beta_0 ({\bf  p}^{[-1]} \otimes {\bf 1}, 0) \quad \mbox{as} \quad k \rightarrow \infty,
		\end{equation}	
		where $\beta_0 \in \mathbb{R}$ with $|\beta_0 |= \|{\bf  p}^{[-1]} \otimes {\bf 1}\|^{-1} \neq 0$.
		
		From (\ref{nabla g}) and \eqref{eq-p-g}, it follows that 
		\begin{equation}
			\label{eq-gk-def}
		\textstyle	{\bf g}^{k}  \coloneq  \nabla g({\bf y}^{k}) > 0
			\quad \mbox{and} \quad
			({\bf  p}^{[-1]} \otimes {\bf 1})^\top {\bf g}^{k} = \sum_{i=1}^{d}p_i^{-1}, \quad \forall k \ge 0.
		\end{equation}
		Since $\{{\bf g}^{k}\}$ is bounded, by passing to a subsequence if necessary, we may assume without loss of generality that ${\bf g}^{k} \rightarrow {\bf {\bar g}}$ as $k \rightarrow \infty$. It then follows that
		\begin{equation}\label{eq-beta-1}
			\begin{split}
				\textstyle\beta_0 \sum_{i=1}^{d}p_i^{-1} & \textstyle = \beta_0 {\bf {\bar g}}^\top ({\bf  p}^{[-1]} \otimes {\bf 1}) 
				= \lim\limits_{k \rightarrow \infty} \langle ({\bf g}^{k}, 0), 
				{\bf u}^k\rangle 
				\\ & \textstyle = \lim\limits_{k \rightarrow \infty} \frac{({\bf g}^k)^\top ({\bf y}^{k} - {\bf y}^0)}{\|({\bf y}^{k} - {\bf y}^{0}, \mu_{k} - \mu_0)\|} \ge 0,
			\end{split}
		\end{equation}
		where the last inequality holds because of $({\bf g}^k)^\top ({\bf y}^{k} - {\bf y}^0) \ge g({\bf y}^0) - g({\bf y}^k) = 0$
		($g$ is convex and $g({\bf y}^{k}) = 0$ for all $k\ge 0$). On the other hand, by \eqref{eq-gk-def}, we obtain
		\begin{equation}\label{eq-beta-2}
			\begin{split}
				\textstyle\beta_0 \sum_{i=1}^{d}p_i^{-1} &\textstyle = \beta_0 ({\bf g}^{0})^\top ({\bf  p}^{[-1]} \otimes {\bf 1}) 
				= \lim\limits_{k \rightarrow \infty} \langle ({\bf g}^{0}, 0), 
				{\bf u}^k\rangle 
				\\ & \textstyle = \lim\limits_{k \rightarrow \infty} \frac{({\bf g}^0)^\top ({\bf y}^{k} - {\bf y}^0)}{\|({\bf y}^{k} - {\bf y}^{0}, \mu_{k} - \mu_0)\|} \le 0,
			\end{split}
		\end{equation}
		where the last inequality holds because of $({\bf g}^0)^\top ({\bf y}^{k} - {\bf y}^0) \le g({\bf y}^k) - g({\bf y}^0) = 0$.
		
		Then, \eqref{eq-beta-1} and \eqref{eq-beta-2} together claim that $\beta_0 = 0$, which contradicts to the definition of $\beta_0$ in \eqref{eq-beta-def}. Consequently, ${\widehat{W}}$ is bounded.
	\end{proof}
	
	\section{A line search Newton-Noda method}\label{sec-LSNNM}
We develop a line search Newton–Noda method (LS-NNM)
for computing the positive $(\boldsymbol{\sigma},{\bf p})$-eigenpair of a nonnegative tensor.
It constructs the Newton equation following the idea of the Newton-Noda iteration \cite{LGL16,LGL17}.
Specificly, in contrast to the halving process used in \cite{LGL17},
we introduce a positivity-preserving descent line search
from optimization perspective to ensure globalization of the method.

	Throughout this section, we assume that ${\bf p}=(p_1,\ldots ,p_d)\in(1,\infty)^d$, and ${\boldsymbol \sigma}=\{\sigma_i\}^d_{i=1}$ is a shape partition of a tensor ${\mathcal A}\in\mathbb{R}^{N_1\times\cdots \times N_m}_{+}$. 
	Observe from Remark~\ref{rm-system-Phis-c} that a pair $({\bf x}, \lambda) \in {\mathbb R}_{++}^{{\boldsymbol \sigma}}\times \mathbb{R}_{++}$ is a positive $(\bssig, {\bf p})$-eigenpair of $\mathcal{A}$ 
	if and only if it satisfies the nonlinear system:
	\begin{equation}\label{eq-def-H}
		H({\bf x}, \lambda)  \coloneq  \begin{pmatrix}
			r({\bf x}, \lambda)\\
			c({\bf x})-1
		\end{pmatrix} = 0,
	\end{equation}
	where
	\begin{equation}\label{eq-r-def}
		r({\bf x}, \lambda) \coloneqq -\Phi({\bf x})\circ {\bf x} + \lambda {\bf x},
		\quad \forall {\bf x} \in {\mathbb R}_{++}^{{\boldsymbol \sigma}},
	\end{equation}
	and here $\Phi$ and $c$ are defined by (\ref{eq-Phi-def}). 
	
	Besides, by Theorem~\ref{thr-minmax}
	and Proposition~\ref{prop-eig}(ii), the positive $(\bssig, {\bf p})$-eigenvector of $\mathcal{ A}$ also solves the constrained optimization problem:
	\begin{equation}
		\begin{split}
			\label{opt-phi-c=1}
			& \min_{{\bf x}\in \Rsig}  
			\left\{ \phi({\bf x})  \coloneq \max_{i\in [d],j_i\in [n_i]} \Phi_{i, j_i}({\bf x})
			\right\}
			\\ & \mbox{s.t.} \quad 
			{\bf x} \in C\coloneqq \{ {\bf x}\in \Rsigpp :  c({\bf x}) = 1\}.
		\end{split}
	\end{equation}
	To enforce feasibility, we introduce a retraction mapping 
	${R}:\{{\bf x}\in {\mathbb R}^{{\boldsymbol \sigma}}: 
	c({\bf x}) \neq 0\} \rightarrow \Rsig$
	defined by
	\begin{equation}
		\label{eq-R-def}
		{R}({\bf x}) = \frac{{\bf x}}{\sqrt[d]{c({\bf x})}},
		\quad \forall {\bf x} \in \{{\bf x}\in {\mathbb R}^{{\boldsymbol \sigma}}: 
		c({\bf x}) \neq 0\}.
	\end{equation}
	It is straightforward to see that $R({\bf x}) \in C$ for all ${\bf x} \in \Rsigpp$, and that
	\begin{equation}\label{eq-DR-def}
		DR({\bf x}) = I - \tfrac{1}{d} {\bf x} \nabla c({\bf x})^{\top}, \quad \forall {\bf x} \in C.
	\end{equation}	
	
	Now, we present our algorithm, referred to as LS-NNM, for computing the positive $(\boldsymbol{\sigma},{\bf p})$-eigenpair of $\mathcal{A}$
	(or equivalently, solving \eqref{eq-def-H} and \eqref{opt-phi-c=1}) in Algorithm~\ref{alg-H-y} below.	
	
			\begin{algorithm}[H]
		\caption{A line search Newton-Noda method (LS-NNM) for \eqref{eq-def-H} and \eqref{opt-phi-c=1}.}
		\label{alg-H-y}
		\begin{algorithmic}[1]
			\setlength{\columnwidth}{\linewidth}
			\STATE Given $\sigma\in (0,1)$, $ \rho \in (0, 1)$ and ${\bf u}^0\in C$.
			Let ${\bf x}^{0} = R({\bf u}^{0})$, where $R$ is defined in \eqref{eq-R-def}.
			Let $\lambda_0= \phi({\bf x}^0)$ and $k:=0$.
			\WHILE{$H({\bf x}^k, \lambda_k)\neq 0$}
			\STATE Compute $({\bf d}^k, \delta_k)$ by solving the Newton equation associated with \eqref{eq-def-H}:
			\begin{equation}\label{eq-nt}
				DH({\bf x}^{k}, \lambda_{k})
				\begin{pmatrix}
					{\bf d}^{k}\\
					\delta_{k}
				\end{pmatrix} + 
				H({\bf x}^{k}, \lambda_{k}) = 0.
			\end{equation}
			\STATE 	Find the largest $\alpha_k\in \{\rho^{i}: i=0,1,\ldots\}$ 
			satisfying:
			\begin{equation}\label{ls}
				{\bf x}^k + \alpha_k {\bf d}^k > 0
				\quad \mbox{and} \quad
				\phi( R({\bf x}^k + \alpha_k {\bf d}^k) ) 
				\le \phi({\bf x}^{k}) + \sigma \alpha_k \delta_{k}.
			\end{equation}
			\vspace{-.45cm}
			\STATE Let ${\bf x}^{k+1}=R({\bf x}^k + \alpha_k {\bf d}^k)$ 
			and $\lambda_{k+1}= \phi({\bf x}^{k+1})$. 
			Let $k\leftarrow k+1$.
			\ENDWHILE
		\end{algorithmic}
	\end{algorithm}
	
	In essence, Algorithm~\ref{alg-H-y} is guided by two key principles. 
	First, the update of the $(\bssig, {\bf p})$-eigenvalue
	sequence $\{\lambda_k\}$ follows the idea of NNI method \cite{Noda71,LGL16,LGL17}.
	Specifically, we set $\lambda_{k+1} = \phi({\bf x}^{k+1})$ at each iteration, 
	in contrast to classical Newton method, which generally updates the eigenvalue by $\tilde{\lambda}_{k+1} = \lambda_k + \alpha_k \delta_k$
	with a suitable stepsize $\alpha_k$.
	This update rule ensures that, 
	at each iteration,
	the Jacobian $DH({\bf x}^{k}, \lambda_{k})$ is nonsingular 
	(see Lemma~\ref{nonsingular} below), and hence the Newton direction in \eqref{eq-nt} is always well-defined.
	
	Second, if ${\bf x}^{k}\in \Rsigpp$ and $\lambda_{k} = \phi({\bf x}^{k})$ is not a $(\bssig, {\bf p})$-eigenpair of $\mathcal{A}$,
	then ${\bf d}^{k}$ given in \eqref{eq-nt}
	indeed is a descent direction
	of the composite function $\phi\circ R$ (see Lemma~\ref{glo} below). 
	This allows us to employ a
	positive-preserving Armijo line search procedure (i.e. \eqref{ls}),	
	ensuring that the iterates
	remain feasible and that the objective function in \eqref{opt-phi-c=1} is nonincreasing. 
	In this sense, Algorithm~\ref{alg-H-y} can also be
	viewed as a feasible descent method 
	for \eqref{opt-phi-c=1}.
	This new insight makes LS-NNM more implementable and transparent compared with the halving process used in \cite{LGL17}.

We are going to present the well-definedness and the global
convergence of LS-NNM in section \ref{sec-gc},
and the quadratic convergence in section~\ref{sec-qc}.

	\subsection{Well-definedness and global convergence}\label{sec-gc}
	A straightforward calculation yields the Jacobian of $H$ in \eqref{eq-def-H} as follows: 
\begin{equation}\label{eq-nabla-H}
	D H({\bf x}, \lambda)  
	=  		
	\begin{pmatrix}
		J({\bf x}, \lambda) & {\bf x}\\
		\nabla c({\bf x})^\top & 0
	\end{pmatrix}, \quad 
	\forall ({\bf x}, \lambda) \in {\mathbb R}_{++}^{{\boldsymbol \sigma}}\times \mathbb{R}_{++},
\end{equation}
where 
\begin{align}
	& J({\bf x}, \lambda) \coloneqq D_{\bf x}r({\bf x}, \lambda) 
	= - \diag({\bf x})D \Phi({\bf x}) - \diag(\Phi({\bf x})) + \lambda I,
	\label{eq-J-def} 
	\\ & \nabla c({\bf x}) 
	= c({\bf x})(
	\|{\bf x}_{1}\|_{p_{1}}^{-p_1} {\bf x}_{1}^{[p_1-1]},\cdots, \|{\bf x}_{d}\|_{p_{d}}^{-p_d} {\bf x}_{d}^{[p_d-1]}
	);\label{eq-nabla-c}
\end{align}
here $\Phi$, $c$ and $r$ are defined in (\ref{eq-Phi-def}) and \eqref{eq-r-def}, respectively.

	The following lemma ensures that the linear system \eqref{eq-nt} is globally well-defined whenever ${\bf x}^{k}>0$ and $\lambda_{k} = \phi({\bf x}^{k})$.
	This result generalizes \cite[Lemma~3]{LGL17}.
	\begin{lemma}\label{nonsingular}
		Consider \eqref{eq-def-H} and \eqref{opt-phi-c=1}.
		Suppose that either Assumption~\ref{ass-str-nonneg} or Assumption~\ref{ass-weak-irr} holds. Let $J$ be defined by \eqref{eq-J-def}, ${\bf x} \in {\mathbb R}_{++}^{{\boldsymbol \sigma}}$ and $\lambda = \phi({\bf x})$. Then, $r({\bf x}, \lambda) \ge 0$ and 
		$DH({\bf x}, \lambda)$ in \eqref{eq-nabla-H} is nonsingular.
		Moreover, 
		\begin{enumerate}[{\rm (i)}]
			\item If either Assumption~\ref{ass-str-nonneg} holds 
			or $r({\bf x}, \lambda)\neq 0$, 
			then $J({\bf x}, \lambda)$ is a nonsingular M-matrix.
			
			\item If Assumption~\ref{ass-weak-irr} holds 
			and $r({\bf x}, \lambda) = 0$, 
			then $J({\bf x}, \lambda)$ is an irreducible singular M-matrix.
		\end{enumerate}
	\end{lemma}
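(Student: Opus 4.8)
The plan is to prove this in three stages, leaning on the algebraic form of $J({\bf x},\lambda)$ and the facts about $D\Phi$ already in Lemma~\ref{lm-Phi}. Throughout, note that each of Assumption~\ref{ass-str-nonneg} and Assumption~\ref{ass-weak-irr} forces $\mathcal A$ to be ${\boldsymbol\sigma}$-strictly nonnegative (for Assumption~\ref{ass-weak-irr}, via the remark after Definition~\ref{struc-t}), so Lemma~\ref{lm-Phi}(i) gives $\Phi({\bf x})>0$.

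First I would dispose of the two elementary claims. Since $\lambda=\phi({\bf x})=\max_{l,k_l}\Phi_{l,k_l}({\bf x})\ge\Phi_{i,j_i}({\bf x})$ and $x_{i,j_i}>0$, each entry $r_{i,j_i}({\bf x},\lambda)=x_{i,j_i}\bigl(\lambda-\Phi_{i,j_i}({\bf x})\bigr)\ge 0$, hence $r({\bf x},\lambda)\ge 0$. For the $Z$-matrix property, recall $J({\bf x},\lambda)=-\diag({\bf x})D\Phi({\bf x})-\diag(\Phi({\bf x}))+\lambda I$: by Lemma~\ref{lm-Phi} the off-diagonal entries of $D\Phi({\bf x})$ are nonnegative, left-multiplication by $\diag({\bf x})$ with ${\bf x}>0$ preserves that sign pattern, and the remaining term only changes the diagonal, so $J({\bf x},\lambda)$ has nonpositive off-diagonal entries. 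Under Assumption~\ref{ass-weak-irr}, Lemma~\ref{lm-Phi}(iii) gives that $-D\Phi({\bf x})$ is irreducible, and the same diagonal-scaling argument shows $J({\bf x},\lambda)$ is irreducible.

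The core step is the identity obtained by applying $J({\bf x},\lambda)$ to the positive vector ${\bf p}^{[-1]}\otimes{\bf x}$: using \eqref{eq-DPhi-px} for the $D\Phi$ term and the componentwise computation $(-\diag(\Phi({\bf x}))+\lambda I)({\bf p}^{[-1]}\otimes{\bf x})={\bf p}^{[-1]}\otimes r({\bf x},\lambda)$ for the rest, one gets
\[
J({\bf x},\lambda)\,({\bf p}^{[-1]}\otimes{\bf x})=\Bigl(1-\sum\nolimits_{i=1}^{d}|\sigma_i|/p_i\Bigr)\,\Phi({\bf x})\circ{\bf x}\;+\;{\bf p}^{[-1]}\otimes r({\bf x},\lambda).
\]
Under Assumption~\ref{ass-str-nonneg} the scalar $1-\sum|\sigma_i|/p_i>0$, so the right-hand side is $>0$ (using $\Phi({\bf x})\circ{\bf x}>0$ and $r({\bf x},\lambda)\ge0$), and Lemma~\ref{lm-ZM}(ii) makes the $Z$-matrix $J({\bf x},\lambda)$ a nonsingular $M$-matrix. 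Under Assumption~\ref{ass-weak-irr} the scalar is $0$, so the right-hand side equals ${\bf p}^{[-1]}\otimes r({\bf x},\lambda)\ge0$: if $r({\bf x},\lambda)\neq 0$ this is a nonzero nonnegative vector and Lemma~\ref{lm-irr-nonsing-M}(ii) makes $J({\bf x},\lambda)$ a nonsingular $M$-matrix (which, with the previous case, proves (i)); if $r({\bf x},\lambda)=0$ then ${\bf p}^{[-1]}\otimes{\bf x}$ lies in the kernel and Lemma~\ref{lm-irr-singular-M} makes $J({\bf x},\lambda)$ an irreducible singular $M$-matrix, which is (ii). Finally, for nonsingularity of $DH({\bf x},\lambda)=\bigl(\begin{smallmatrix}J({\bf x},\lambda)&{\bf x}\\\nabla c({\bf x})^{\top}&0\end{smallmatrix}\bigr)$, suppose $J({\bf x},\lambda){\bf v}+t{\bf x}=0$ and $\nabla c({\bf x})^{\top}{\bf v}=0$; I would use that $\nabla c({\bf x})>0$ (from \eqref{eq-nabla-c}) and $\nabla c({\bf x})^{\top}({\bf p}^{[-1]}\otimes{\bf x})=c({\bf x})\sum_i p_i^{-1}>0$ (a one-line computation via $({\bf x}_i^{[p_i-1]})^{\top}{\bf x}_i=\|{\bf x}_i\|_{p_i}^{p_i}$). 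When $J({\bf x},\lambda)$ is a nonsingular $M$-matrix, ${\bf v}=-tJ({\bf x},\lambda)^{-1}{\bf x}$ with $J({\bf x},\lambda)^{-1}\ge0$ and ${\bf x}>0$, so $J({\bf x},\lambda)^{-1}{\bf x}$ is nonnegative and nonzero and $\nabla c({\bf x})^{\top}{\bf v}$ has the strict sign of $-t$, forcing $t=0$ and then ${\bf v}=0$. When $J({\bf x},\lambda)$ is an irreducible singular $M$-matrix, its zero eigenvalue is simple with positive left null vector ${\bf w}$ and $\mathrm{range}\,J({\bf x},\lambda)=\{{\bf u}:{\bf w}^{\top}{\bf u}=0\}$; applying ${\bf w}^{\top}$ to $J({\bf x},\lambda){\bf v}=-t{\bf x}$ gives $t\,{\bf w}^{\top}{\bf x}=0$, so $t=0$, then ${\bf v}\in\ker J({\bf x},\lambda)=\mathrm{span}\{{\bf p}^{[-1]}\otimes{\bf x}\}$, and $\nabla c({\bf x})^{\top}{\bf v}=0$ together with $\nabla c({\bf x})^{\top}({\bf p}^{[-1]}\otimes{\bf x})>0$ forces ${\bf v}=0$.

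I expect the main obstacle to be the bookkeeping in the displayed identity — correctly matching the Hadamard product, the $\otimes$-action of ${\bf p}^{[-1]}$, and the block structure inherited from \eqref{eq-DPhi-px} — together with the singular-$J$ subcase of the last stage, which genuinely requires the left Perron vector and range description of an irreducible singular $M$-matrix rather than a naive Schur-complement argument.
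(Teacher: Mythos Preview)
Your proof is correct and follows essentially the same route as the paper: the key identity $J({\bf x},\lambda)({\bf p}^{[-1]}\otimes{\bf x})=(1-\sum_i|\sigma_i|/p_i)\,\Phi({\bf x})\circ{\bf x}+{\bf p}^{[-1]}\otimes r({\bf x},\lambda)$, the case split for (i)/(ii) via Lemmas~\ref{lm-ZM}, \ref{lm-irr-nonsing-M}, \ref{lm-irr-singular-M}, and the nonsingularity of $DH$ are all handled the same way. The only cosmetic difference is that the paper proves nonsingularity of $DH$ by looking at the \emph{transposed} homogeneous system $J^{\top}{\bf z}+\delta\nabla c({\bf x})=0$, ${\bf x}^{\top}{\bf z}=0$ (so in the singular case it multiplies by the \emph{right} null vector ${\bf p}^{[-1]}\otimes{\bf x}$ of $J$), whereas you work with the direct system and use the \emph{left} null vector; these are dual versions of the same argument.
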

	\begin{proof}
		Let ${\bf x} \in {\mathbb R}_{++}^{{\boldsymbol \sigma}}$
		and $\lambda = \phi({\bf x})$. 
		Since $-D\phi({\bf x})$ is a Z-matrix (see Lemma~\ref{lm-Phi}),
		it follows from \eqref{eq-J-def} that $J({\bf x}, \lambda)$ is also a Z-matrix. From (\ref{eq-DPhi-px}) and \eqref{eq-r-def},
		\begin{equation}\label{eq-J-p-x}
			\begin{split}
				J({\bf x}, \lambda) ({\bf  p}^{[-1]}\otimes {\bf x}) 
				& = - \diag({\bf x})D \Phi({\bf x}) ({\bf  p}^{[-1]}\otimes {\bf x}) + {\bf  p}^{[-1]}\otimes (-\Phi({\bf x})\circ {\bf x} + \lambda {\bf x})
				\\& \textstyle = (1 - \sum_{i=1}^{d}|\sigma_i|/p_i) {\bf  p}^{[-1]}\otimes \Phi({\bf x}) \circ {\bf x} + {\bf  p}^{[-1]}\otimes r({\bf x}, \lambda).
			\end{split}
		\end{equation}
		Furthermore, the definitions of $\phi$ in \eqref{opt-phi-c=1} and $r$ in \eqref{eq-r-def} with $\lambda = \phi(\bf x)$ directly follow that $r({\bf x}, \lambda) \ge 0$.
		
		Now, we consider items (i) and (ii).
		Suppose that Assumption~\ref{ass-str-nonneg} holds.
		Then, we have $\sum_{i=1}^{d}|\sigma_i|/p_i<1$, 
		which, together with $\Phi({\bf x})>0$ (see Lemma~\ref{lm-Phi}(i)), $r({\bf x}, \lambda) \ge 0$ and (\ref{eq-J-p-x}),
		implies that $J({\bf x}, \lambda) ({\bf  p}^{[-1]}\otimes {\bf x})>0$.
		By Lemma~\ref{lm-ZM}, we conclude that $J({\bf x}, \lambda)$ is a nonsingular $M$-matrix.
		
		Suppose that Assumption~\ref{ass-weak-irr} holds.
		Then $-D\phi({\bf x})$ is an irreducible singular $M$-matrix (see Lemma~\ref{lm-Phi}(iii)).
		If $r({\bf x}, \lambda) \neq 0$, by \eqref{eq-J-p-x} we can obtain 
		$$
		J({\bf x}, \lambda) ({\bf p}^{[-1]}\otimes {\bf x}) 
		= {\bf  p}^{[-1]}\otimes r({\bf x}, \lambda) \in \Rsigp\setminus \{0\},
		$$
		which, together with Lemma~\ref{lm-irr-nonsing-M}, implies that
		$J({\bf x}, \lambda)$ is a nonsingular $M$-matrix.
		On the other hand, 	if $r({\bf x}, \lambda) = 0$, then \eqref{eq-J-def} reduces to $J({\bf x}, \lambda)=- \diag({\bf x})D \Phi({\bf x})$, which is an irreducible singular $M$-matrix.
		
		To show the nonsingularity of $DH({\bf x}, \lambda)$ in \eqref{eq-nabla-H}, consider the linear system:
		\begin{equation}
			\label{eq-J-x-nt}
			\begin{split}
				J({\bf x}, \lambda)^{\top} {\bf z} 
				+ \delta \nabla c({\bf x}) \overset{\mathrm{(a)}}{=} 0 
				\quad \mbox{and} \quad 
				{\bf x}^{\top} {\bf z} \overset{\mathrm{(b)}}{=} 0.
			\end{split}
		\end{equation}
		We claim that ${\bf z} =0$ and $\delta = 0$, which completes the proof.
		
		{\bf Case 1}. Either Assumption~\ref{ass-str-nonneg} holds or $r({\bf x}, \lambda)\neq 0$.
		In this case, by item (i) we have that $J({\bf x}, \lambda)$ is a nonsingular M-matrix.
		Left-multiplying both sides of (a) in \eqref{eq-J-x-nt} by ${\bf x}^{\top} (J({\bf x}, \lambda)^{\top})^{-1}$,
		and using (b) in \eqref{eq-J-x-nt},
		we obtain 
		$$
		\delta {\bf x}^{\top} (J({\bf x}, \lambda)^{\top})^{-1} \nabla c({\bf x})
		= 0.
		$$
		From ${\bf x}>0$, $\nabla c({\bf x})>0$ (see \eqref{eq-nabla-c})
		and $(J({\bf x}, \lambda)^{\top})^{-1}\ge 0$ (see Lemma~\ref{lm-ZM}),
		it follows that ${\bf x}^{\top} (J({\bf x}, \lambda)^{\top})^{-1} \nabla c({\bf x}) \neq 0$,
		which implies that $\delta =0$.
		Combining this with (a) in \eqref{eq-J-x-nt} and the nonsingularity of $J({\bf x}, \lambda)$, we conclude that ${\bf z} = 0$.
		
		{\bf Case 2}. Assumption~\ref{ass-weak-irr} holds 
		and $r({\bf x}, \lambda) = 0$.
		In this case, \eqref{eq-J-p-x} reduces to $J({\bf x}, \lambda) ({\bf  p}^{[-1]}\otimes {\bf x})=0$.
		Left-multiplying both sides of (a) in \eqref{eq-J-x-nt} by $({\bf  p}^{[-1]}\otimes {\bf x})^{\top}$, then $\delta ({\bf  p}^{[-1]}\otimes {\bf x})^{\top} \nabla c({\bf x}) = 0$.
		From ${\bf x}>0$ and $\nabla c({\bf x})>0$, it follows that $\delta = 0$.
		Moreover, item (ii) shows that 
		$J({\bf x}, \lambda)$ is an irreducible singular M-matrix.
		Using this to (a) in \eqref{eq-J-x-nt} with $\delta = 0$
		and invoking Lemma~\ref{lm-irr-singular-M},
		we have that ${\bf z} \ge 0$ or ${\bf z} \le 0$.
		Together with (b) in \eqref{eq-J-x-nt} and ${\bf x}>0$,
		this further proves that ${\bf z} = 0$.
	\end{proof}

	Now, we are going to claim that the line search step in \eqref{ls} is also well-defined.
	\begin{lemma}\label{glo}
		Consider \eqref{eq-def-H} and \eqref{opt-phi-c=1}.
		Suppose that either Assumption~\ref{ass-str-nonneg} or Assumption~\ref{ass-weak-irr} holds. 
		Let $R$ be defined by \eqref{eq-R-def}.
		Let ${\bf x}^k\in C$, and let $({\bf d}^k, \delta_k)$ be the solution of (\ref{eq-nt}) with $\lambda_k = \phi({\bf x}^k)$. 
		Then, it holds that
		\begin{equation}\label{eq-mv-phi}
			\phi(R({\bf x}^k + \alpha {\bf d}^k)) = \phi({\bf x}^{k}) + \alpha\delta_{k} + O(\|\alpha{\bf d}^k\|^2) \quad \mbox{as}\ \alpha \to 0^{+}.
		\end{equation}
		Moreover, if $r({\bf x}_k, \lambda_{k}) = 0$, then ${\bf d}^{k}=0$ and $\delta_k = 0$;
		if $r({\bf x}_k, \lambda_{k}) \neq 0$, then
		\begin{equation}\label{eq-delta-r}
			\delta_{k} = -\frac{\nabla c({\bf x}^{k})^{\top}J({\bf x}^{k}, \lambda_{k})^{-1}
			r({\bf x}_k, \lambda_{k})}
			{\nabla c({\bf x}^{k})^{\top}J({\bf x}^{k}, \lambda_{k})^{-1}{\bf x}^{k}}<0,
		\end{equation}
		where $J$ and $\nabla c$ are given in \eqref{eq-J-def} and \eqref{eq-nabla-c}, respectively.
	\end{lemma}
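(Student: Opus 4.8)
The plan is to work directly from the block form of the Newton equation \eqref{eq-nt}. Using \eqref{eq-nabla-H} together with the feasibility $c({\bf x}^k)=1$ (which holds since ${\bf x}^k\in C$), equation \eqref{eq-nt} splits into
\begin{equation*}
	J({\bf x}^k,\lambda_k){\bf d}^k + \delta_k{\bf x}^k + r({\bf x}^k,\lambda_k)=0
	\quad\text{and}\quad
	\nabla c({\bf x}^k)^\top {\bf d}^k = 0 .
\end{equation*}
When $r({\bf x}^k,\lambda_k)=0$ we have $H({\bf x}^k,\lambda_k)=0$, so by the nonsingularity of $DH({\bf x}^k,\lambda_k)$ (Lemma~\ref{nonsingular}) the unique solution of \eqref{eq-nt} is $({\bf d}^k,\delta_k)=(0,0)$, and \eqref{eq-mv-phi} is immediate because $R({\bf x}^k)={\bf x}^k$. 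Thus the content lies in the case $r({\bf x}^k,\lambda_k)\neq 0$.

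In that case Lemma~\ref{nonsingular}(i) applies (its hypothesis $r({\bf x},\lambda)\neq0$ is met), so $J({\bf x}^k,\lambda_k)$ is a nonsingular $M$-matrix. I would then solve ${\bf d}^k = -J({\bf x}^k,\lambda_k)^{-1}(\delta_k{\bf x}^k + r({\bf x}^k,\lambda_k))$, substitute into $\nabla c({\bf x}^k)^\top{\bf d}^k=0$, and isolate $\delta_k$ to obtain precisely \eqref{eq-delta-r}. For the sign: $\nabla c({\bf x}^k)>0$ by \eqref{eq-nabla-c} (as ${\bf x}^k>0$ and $c({\bf x}^k)=1$); $J({\bf x}^k,\lambda_k)^{-1}\ge 0$ by Lemma~\ref{lm-ZM}; and ${\bf x}^k>0$, $r({\bf x}^k,\lambda_k)\ge 0$ with $r({\bf x}^k,\lambda_k)\neq 0$ (from Lemma~\ref{nonsingular}). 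Since $J^{-1}$ is nonsingular and nonnegative, both $J^{-1}{\bf x}^k$ and $J^{-1}r({\bf x}^k,\lambda_k)$ are nonzero nonnegative vectors, and pairing each with the strictly positive $\nabla c({\bf x}^k)$ gives $\nabla c({\bf x}^k)^\top J^{-1}{\bf x}^k>0$ and $\nabla c({\bf x}^k)^\top J^{-1}r({\bf x}^k,\lambda_k)>0$; hence $\delta_k<0$.

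For the expansion \eqref{eq-mv-phi}, I would first observe that $c$ is smooth on $\Rsigpp$ and $\nabla c({\bf x}^k)^\top{\bf d}^k=0$, so $\sqrt[d]{c({\bf x}^k+\alpha{\bf d}^k)}=1+O(\alpha^2)$ and therefore $R({\bf x}^k+\alpha{\bf d}^k)={\bf x}^k+\alpha{\bf d}^k+O(\alpha^2)$, with ${\bf x}^k+\alpha{\bf d}^k>0$ for $\alpha$ small. Next, since $r$ is smooth and, by the first Newton relation, $D_{({\bf x},\lambda)}r({\bf x}^k,\lambda_k)({\bf d}^k,\delta_k)=J({\bf x}^k,\lambda_k){\bf d}^k+\delta_k{\bf x}^k=-r({\bf x}^k,\lambda_k)$, a Taylor expansion gives $r({\bf x}^k+\alpha{\bf d}^k,\lambda_k+\alpha\delta_k)=(1-\alpha)r({\bf x}^k,\lambda_k)+O(\alpha^2)$. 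Writing $r({\bf x},\lambda)={\bf x}\circ(\lambda{\bf 1}-\Phi({\bf x}))$ and dividing componentwise by ${\bf x}^k+\alpha{\bf d}^k={\bf x}^k\circ({\bf 1}+O(\alpha))$, this becomes $(\lambda_k+\alpha\delta_k){\bf 1}-\Phi({\bf x}^k+\alpha{\bf d}^k)=({\bf 1}+O(\alpha))\circ(\lambda_k{\bf 1}-\Phi({\bf x}^k))+O(\alpha^2)$. For each index $(i,j_i)$ in the active set $\{(i,j_i):\Phi_{i,j_i}({\bf x}^k)=\lambda_k\}$ the corresponding right-hand component is exactly $O(\alpha^2)$, so $\Phi_{i,j_i}({\bf x}^k+\alpha{\bf d}^k)=\lambda_k+\alpha\delta_k+O(\alpha^2)$, and by smoothness of $\Phi_{i,j_i}$ the same holds after replacing ${\bf x}^k+\alpha{\bf d}^k$ by $R({\bf x}^k+\alpha{\bf d}^k)$; for each inactive index, $\Phi_{i,j_i}(R({\bf x}^k+\alpha{\bf d}^k))=\Phi_{i,j_i}({\bf x}^k)+O(\alpha)$ remains strictly below $\lambda_k$ for $\alpha$ small. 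Taking the maximum over the finitely many indices yields $\phi(R({\bf x}^k+\alpha{\bf d}^k))=\lambda_k+\alpha\delta_k+O(\alpha^2)=\phi({\bf x}^k)+\alpha\delta_k+O(\alpha^2)$, which is \eqref{eq-mv-phi}. (In the subcase $r({\bf x}^k,\lambda_k)=0$ the active set is everything and $({\bf d}^k,\delta_k)=(0,0)$, so the claim is trivial.)

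The main obstacle I anticipate is this last step: because $\phi$ is merely a pointwise maximum of smooth functions it is a priori only directionally differentiable, so obtaining a genuine $O(\alpha^2)$ (rather than $O(\alpha)$) remainder hinges on the structural fact that the Newton equation forces all active components $\Phi_{i,j_i}$ to share the common first-order slope $\delta_k$ with a second-order error, while the retraction $R$ perturbs the iterate only at second order. Care is required to make the cancellation "$O(\alpha)$ times a component of $\lambda_k{\bf 1}-\Phi({\bf x}^k)$ that vanishes on the active set" rigorous and uniform over the finite index set, and to check that the inactive components cannot overtake the maximum for small $\alpha>0$.
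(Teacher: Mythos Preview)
Your proposal is correct and follows essentially the same route as the paper: split \eqref{eq-nt} into $J_k{\bf d}^k+\delta_k{\bf x}^k+{\bf r}^k=0$ and $\nabla c({\bf x}^k)^\top{\bf d}^k=0$, dispose of the case ${\bf r}^k=0$ via nonsingularity of $DH$, derive \eqref{eq-delta-r} by eliminating ${\bf d}^k$ and read off the sign from $J_k^{-1}\ge 0$, and then Taylor-expand using that the retraction perturbs only at second order because $\nabla c({\bf x}^k)^\top{\bf d}^k=0$.

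The only organisational difference is in the expansion step. You expand $r$, obtain $r({\bf x}^k+\alpha{\bf d}^k,\lambda_k+\alpha\delta_k)=(1-\alpha){\bf r}^k+O(\alpha^2)$, divide componentwise, and then split into active versus inactive indices to take the maximum. The paper instead inverts $J_k=-\diag({\bf x}^k)D\Phi({\bf x}^k)-\diag(\Phi({\bf x}^k))+\lambda_k I$ against the Newton relation to compute $D\Phi({\bf x}^k){\bf d}^k=\delta_k{\bf 1}+({\bf x}^k+{\bf d}^k)\circ{\bf r}^k\circ({\bf x}^k)^{[-2]}$ directly, and arrives at the closed form
\[
\Phi\bigl(R({\bf x}^k+\alpha{\bf d}^k)\bigr)-(\lambda_k+\alpha\delta_k){\bf 1}
=-\bigl({\bf x}^k-\alpha({\bf x}^k+{\bf d}^k)\bigr)\circ{\bf r}^k\circ({\bf x}^k)^{[-2]}+O(\|\alpha{\bf d}^k\|^2).
\]
Since ${\bf r}^k\ge 0$ with at least one zero entry, the first term on the right is componentwise $\le 0$ with minimum exactly $0$ for small $\alpha$, so the maximum is $O(\|\alpha{\bf d}^k\|^2)$ without any case split. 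This buys a slightly cleaner argument and keeps the remainder in the form $O(\|\alpha{\bf d}^k\|^2)$ (rather than $O(\alpha^2)$), which is the form later invoked uniformly in $k$ in the global-convergence proof; your active/inactive argument is equally valid for fixed $k$ and recovers the same conclusion.
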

	\begin{remark}
		If $r({\bf x}_k, \lambda_{k}) \neq 0$, then one can 
		see from \eqref{eq-mv-phi} and \eqref{eq-delta-r} 
		that $\phi(R({\bf x}^k + \alpha {\bf d}^k)) \le \phi({\bf x}^{k}) + \alpha \sigma \delta_{k}$
		for all sufficiently small $\alpha>0$
		with constant $\sigma\in (0, 1)$.
	\end{remark}
	\begin{proof}
		For clarity, let
		\begin{equation}
			\label{eq-rk-Jk}
			{\bf r}^k := r({\bf x}^{k}, \lambda_{k})
			\quad \mbox{and} \quad
			J_k := J({\bf x}^{k}, \lambda_{k}),
		\end{equation}
		then ${\bf r}^k \ge 0$ by Lemma~\ref{nonsingular}.
		Thus, from (\ref{eq-nt}), it follows that
		\begin{equation}
			\label{eq-nt-J-nabla-c}
			J_k {\bf d}^{k} + \delta_{k}{\bf x}^{k} + {\bf r}^k = 0\quad\text{and}\quad \nabla c({\bf x}^{k})^{\top}{\bf d}^k = 0.
		\end{equation}
		
		If ${\bf r}^{k} =0$, then from the nonsingularity of $DH({\bf x}^{k}, \lambda_k)$ and ${\bf x}^{k} \in C$, we have ${\bf d}^{k} = 0$ and $\delta_k = 0$.
		
		Suppose that ${\bf r}^k \neq 0$.
		By Lemma~\ref{nonsingular}(i), $J_k$ is a nonsingular $M$-matrix.
		Since ${\bf x}^{k}>0$, $\nabla c({\bf x}^{k})>0$,
		${\bf r}^k \in \Rsigp\setminus\{0\}$
		and $J_{k}^{-1} \ge 0$,
		we can obtain that $\nabla c({\bf x}^{k})^{\top}J_k^{-1}{\bf x}^{k}>0$
		and $\nabla c({\bf x}^{k})^{\top}J_k^{-1}{\bf r}^{k}>0$.
		Left-multiplying both sides of the first equality in \eqref{eq-nt-J-nabla-c} by $\nabla c({\bf x}^{k})^{\top}J_{k}^{-1}$, then, we conclude \eqref{eq-delta-r}.
		
		Next, we turn to show \eqref{eq-mv-phi}.
		From \eqref{eq-J-def},
			\begin{align*}
				D\Phi({\bf x}^k){\bf d}^k  
				&=  \diag({\bf x}^k)^{-1} \left( -J({\bf x}^{k}, \lambda_k) - \diag(\Phi({\bf x}^{k})) + \lambda_{k} I \right){\bf d}^k
				\\& = \diag({\bf x}^k)^{-1}(\delta_{k}{\bf x}^{k} + {\bf r}^k) + ({\bf x}^k)^{[-2]}\circ {\bf r}^k \circ {\bf d}^k
				\\ & = \delta_{k} {\bf 1} + ({\bf x}^k + {\bf d}^k)\circ{\bf r}^k \circ ({\bf x}^k)^{[-2]},
			\end{align*}
		where the second equality holds because of \eqref{eq-nt-J-nabla-c}
		and the definition of $r$ in \eqref{eq-r-def}.
		$\Phi(R(\cdot))$ is twice continuously differentiable 
		in a neighborhood of ${\bf x}^{k}$, then
		\begin{equation*}
			\begin{split}
				& \Phi\left(R({\bf x}^k + \alpha {\bf d}^k)\right) 
				= \Phi({\bf x}^k) + \alpha D\Phi({\bf x}^k) D R({\bf x}_k){\bf d}^k + O(\|\alpha {\bf d}^k\|^2)\\
				& = \Phi({\bf x}^k) + \alpha D\Phi({\bf x}^k){\bf d}^k + O(\|\alpha {\bf d}^k\|^2)
				\\ & =  \Phi({\bf x}^k) + \alpha \delta_{k} {\bf 1}
				+ \alpha ({\bf x}^k +  {\bf d}^k)\circ {\bf r}^{k} \circ ({\bf x}^k)^{[-2]}
				+ O(\|\alpha {\bf d}^k\|^2)
				 \quad \mbox{as}\ \alpha \to 0^{+},
			\end{split}
		\end{equation*}
		where we use \eqref{eq-DR-def} and the fact $\nabla c({\bf x}^{k})^{\top}{\bf d}^{k} = 0$ to get the second equality. Thus,
		\begin{align}
			& \Phi\left(R({\bf x}^k + \alpha {\bf d}^k)\right)
			- (\lambda_{k} + \alpha \delta_{k}) {\bf 1}\notag
			\\& = \Phi({\bf x}^k) - \lambda_{k} {\bf 1}
			+ \alpha ({\bf x}^k + {\bf d}^k)
			\circ {\bf r}^{k} \circ ({\bf x}^k)^{[-2]}
			+ O(\|\alpha {\bf d}^k\|^2)\notag
			\\&= - 
			\left( {\bf x}^{k}  - \alpha ({\bf x}^k + {\bf d}^k)\right)
			\circ {\bf r}^{k}\circ ({\bf x}^{k})^{[-2]}
			+ O(\|\alpha {\bf d}^k\|^2)
			 \quad \mbox{as}\ \alpha \to 0^{+},\label{eq-Phi-delta}
		\end{align}
		where the last equality holds because of \eqref{eq-r-def}
		and \eqref{eq-rk-Jk}.
		
		From the definition of $r$ in \eqref{eq-r-def} and the fact $\lambda_{k} = \phi({\bf x}^{k})$, we have ${\bf r}^{k} \ge 0$
		and $({\bf r}^{k})_{\bar i,\bar{j_i}} = 0$ for some 
		$\bar i\in [d],\bar{j_i}\in [n_{\bar i}]$.
		Since ${\bf x}^{k}>0$, for all sufficiently small $\alpha>0$, we have ${\bf x}^{k}  - \alpha ({\bf x}^k + {\bf d}^k)> 0$,
		which implies
		\begin{equation}\label{eq-miin}
			\textstyle
			\min\limits_{i\in [d], j_i\in [n_i]} \left(\left( {\bf x}^{k}  - \alpha ({\bf x}^k + {\bf d}^k)\right)
			\circ {\bf r}^{k}\circ ({\bf x}^{k})^{[-2]}\right)_{i,j_i} = 0.
		\end{equation}
		By \eqref{eq-Phi-delta} and (\ref{eq-miin}), we conclude that
		\begin{align*}
			& \textstyle \phi(R({\bf x}^k + \alpha {\bf d}^k)) 
			= \max\limits_{i\in [d], j_i\in [n_i]} 
			\Phi_{i,j_i} \left(R({\bf x}^k + \alpha {\bf d}^k)\right) 
			= \lambda_k + \alpha \delta_k + O(\|\alpha {\bf d}^k\|^2)
		\end{align*}
		when $\alpha \to 0^{+}$. This completes the proof.
	\end{proof}
	
	Next, we are going to establish the global convergence of LS-NNM.
	\begin{lemma}\label{lm-bdd}
		Consider \eqref{eq-def-H} and \eqref{opt-phi-c=1}.
		Suppose that either Assumption~\ref{ass-str-nonneg} or Assumption~\ref{ass-weak-irr} holds.
		Let $\{{\bf x}^{k}\}$ and $\{\lambda_{k}\}$ be generated by Algorithm \ref{alg-H-y}. 
		Then, the following statements hold:
		\begin{enumerate}[{\rm (i)}]
			\item 
			There exist constants $\eta>0$ and $M>0$ such that
			\begin{equation}\label{eq-xk-pos-lbd}
				\{{\bf x}^{k}\} \subset C \cap X
				\quad \mbox{with} \quad
				X \coloneqq \{{\bf x} \in \Rsig :  \eta {\bf 1} \le {\bf x} \le M {\bf 1} \}.
			\end{equation}
			\item $\{\lambda_{k}\}$ is nonincreasing and bounded below 
			by $r^{(\bssig, {\bf p})}(\mathcal{A})$.
			\item $\lim_{k \rightarrow \infty}\alpha_k \delta_{k} = 0$.
			\item $\{{\bf d}^{k}\}$ is bounded.
			
		\end{enumerate}
	\end{lemma}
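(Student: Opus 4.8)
The plan is to prove the four items in the order (ii), (i), (iii), (iv), since each one feeds into the next; throughout we may assume Algorithm~\ref{alg-H-y} generates an infinite sequence (otherwise all assertions are trivial for the finite output), so at every iteration $H({\bf x}^k,\lambda_k)\neq 0$, and since ${\bf x}^k\in C$ forces $c({\bf x}^k)=1$, this means ${\bf r}^k := r({\bf x}^k,\lambda_k)\neq 0$ for all $k$. For item (ii): as $\lambda_k=\phi({\bf x}^k)$ and ${\bf r}^k\neq0$, Lemma~\ref{glo} gives $\delta_k<0$, so combining the line search condition in \eqref{ls} with ${\bf x}^{k+1}=R({\bf x}^k+\alpha_k{\bf d}^k)$ and $\lambda_{k+1}=\phi({\bf x}^{k+1})$ yields
\begin{equation}\label{eq-lam-dec}
	\lambda_{k+1} \;=\; \phi({\bf x}^{k+1}) \;\le\; \phi({\bf x}^k)+\sigma\alpha_k\delta_k \;=\; \lambda_k+\sigma\alpha_k\delta_k \;<\; \lambda_k ,
\end{equation}
hence $\{\lambda_k\}$ is nonincreasing. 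For the lower bound, note ${\bf x}^k\in C\subseteq\Rsigpp$ with $c({\bf x}^k)=1$, so ${\bf x}^k$ is feasible for the minimization in \eqref{maximality-3} under Assumption~\ref{ass-str-nonneg} (whose constraint is $c({\bf x})\le1$) and for the unconstrained minimization in \eqref{maximality} under Assumption~\ref{ass-weak-irr}; Theorem~\ref{thr-minmax} then gives $\lambda_k=\phi({\bf x}^k)=\max_{i\in[d],\,j_i\in[n_i]}\Phi_{i,j_i}({\bf x}^k)\ge r^{(\bssig,{\bf p})}(\mathcal{A})$.

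For items (i) and (iii): from \eqref{eq-lam-dec} we have $\phi({\bf x}^k)=\lambda_k\le\lambda_0=\phi({\bf x}^0)$, and $c({\bf x}^k)=1$, so $\{{\bf x}^k\}\subseteq W:=\{{\bf x}\in\Rsigpp:\phi({\bf x})\le\phi({\bf x}^0),\ c({\bf x})=1\}$. Since ${\bf x}^0=R({\bf u}^0)$ satisfies $c({\bf x}^0)=1$, Proposition~\ref{pro-level-bdd} applies and shows $W$ is compact with $\eta:=\inf_{{\bf x}\in W}\min\{x_{i,j_i}:i\in[d],j_i\in[n_i]\}>0$; taking $M>0$ to be an upper bound of $\max_{i,j_i}x_{i,j_i}$ over the bounded set $W$ gives $\{{\bf x}^k\}\subseteq C\cap X$ with $X$ as in \eqref{eq-xk-pos-lbd}, which is item (i). For item (iii), rewrite \eqref{eq-lam-dec} as $\lambda_k-\lambda_{k+1}\ge\sigma|\alpha_k\delta_k|$ (using $\alpha_k>0$, $\delta_k<0$) and sum over $k=0,\dots,N$; telescoping together with item (ii) yields $\sigma\sum_{k=0}^{N}|\alpha_k\delta_k|\le\lambda_0-\lambda_{N+1}\le\lambda_0-r^{(\bssig,{\bf p})}(\mathcal{A})$, so the series $\sum_k|\alpha_k\delta_k|$ converges and hence $\alpha_k\delta_k\to0$.

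For item (iv): since ${\bf x}^k\in C$ we have $H({\bf x}^k,\lambda_k)=({\bf r}^k,0)^\top$, so \eqref{eq-nt} gives $({\bf d}^k,\delta_k)^\top=-\,DH({\bf x}^k,\lambda_k)^{-1}({\bf r}^k,0)^\top$ and therefore $\|{\bf d}^k\|\le\|DH({\bf x}^k,\lambda_k)^{-1}\|\,\|{\bf r}^k\|$. By item (i) the iterates lie in the compact set $W\subseteq\Rsigpp$, on which $\Phi$ is continuous and $\lambda_k=\phi({\bf x}^k)\in[r^{(\bssig,{\bf p})}(\mathcal{A}),\lambda_0]$, so ${\bf r}^k=-\Phi({\bf x}^k)\circ{\bf x}^k+\lambda_k{\bf x}^k$ is bounded. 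For the Jacobian, set $K:=\{({\bf x},\phi({\bf x})):{\bf x}\in W\}$, a compact subset of $\Rsigpp\times\mathbb{R}$; Lemma~\ref{nonsingular} (applied at each ${\bf x}\in W$ with $\lambda=\phi({\bf x})$) guarantees $DH({\bf x},\phi({\bf x}))$ is nonsingular on $K$, and since $DH$ and matrix inversion are continuous where defined, $({\bf x},\lambda)\mapsto\|DH({\bf x},\lambda)^{-1}\|$ is continuous, hence bounded, on the compact set $K$. As $({\bf x}^k,\lambda_k)\in K$ for all $k$, $\sup_k\|DH({\bf x}^k,\lambda_k)^{-1}\|<\infty$, and item (iv) follows.

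The step I expect to be the main obstacle is item (iv): Lemma~\ref{nonsingular} gives nonsingularity of $DH$ only pointwise, and to upgrade it to a \emph{uniform} bound $\sup_k\|DH({\bf x}^k,\lambda_k)^{-1}\|<\infty$ one must first confine the iterates to a compact subset of $\Rsigpp$ (item (i), via the level-set compactness of Proposition~\ref{pro-level-bdd}) and then invoke continuity of matrix inversion on the compact set $K$. The remaining items reduce to the descent inequality \eqref{eq-lam-dec} combined with a telescoping estimate.
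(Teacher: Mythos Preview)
Your proposal is correct and follows essentially the same route as the paper's proof: establish the descent inequality $\lambda_{k+1}\le\lambda_k+\sigma\alpha_k\delta_k$ and the lower bound from Theorem~\ref{thr-minmax} to get (ii), feed this into Proposition~\ref{pro-level-bdd} for the compact level set in (i), telescope for (iii), and combine Lemma~\ref{nonsingular} with compactness/continuity for (iv). If anything, your treatment of (iii) and (iv) is cleaner than the paper's (which has a minor sign slip in the summation for (iii) and is terser about the uniform Jacobian bound), but the underlying ideas are identical.
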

	
	\begin{proof}
		It is straightforward to see from Algorithm~\ref{alg-H-y} that
		${\bf x}^{k} \in C$ and $\lambda_{k} = \phi({\bf x}^{k})$
		for all $k\ge 0.$
		Then, from Theorem~\ref{thr-minmax} and \eqref{ls}, 
		\begin{equation}
			\label{eq-r-lambda}
			r^{(\bssig, {\bf p})}(\mathcal{A})
			\le \lambda_{k+1} \le \lambda_k + \sigma \alpha_k \delta_k 
			\le \lambda_k,
			\quad \forall k=0,1,\ldots,
		\end{equation}
		where the last inequality holds since $\delta_k \le 0$
		(see Lemma~\ref{glo}).
		This shows item (ii).
		
		Moreover, from the above facts, it follows that
		$
		\{{\bf x}^{k}\} \subset \{{\bf x} \in C : 
		\phi({\bf x}) \le \phi({\bf x}^0)\},
		$
		which, together with Proposition~\ref{pro-level-bdd},
		implies item (i).
		
		By \eqref{eq-r-lambda}, $\sigma \delta_k \le \lambda_{k} - \lambda_{k+1}$ holds for every $k \in \mathbb{N}_{0}$.
		Summing these inequalities from $k=0$ to $\infty$,
		and using item (ii), we have $\sum_{k=0}^{\infty} \sigma \delta_k < \lambda_0 - r^{(\bssig, {\bf p})}(\mathcal{A})<\infty$.
		This implies item (iii).
		
		Finally, from \eqref{eq-nt} and $c({\bf x}^{k}) = 1$, 
		it follows that for all $k = 0,1,\ldots$,
		\begin{equation*}
			\begin{split}
				\|{\bf d}^k\|  & \le \left\|
				\begin{pmatrix}
					{\bf d}^k\\
					\delta_k
				\end{pmatrix}
				\right\| 
				= \left\|
				D H({\bf x}^k, \lambda_k)^{-1}
				\begin{pmatrix}
					r({\bf x}^k, \lambda_k)\\
					0
				\end{pmatrix}
				\right\| 
				\le \|
				D H({\bf x}^k, \lambda_k)^{-1}\| \|r({\bf x}^k, \lambda_k)\|.
			\end{split}
		\end{equation*}	
		Note that, on the compact set
		$X \subset \Rsigpp$,
		$D H(\cdot, \phi(\cdot))$ is nonsingular and continuous (see Lemma~\ref{nonsingular}),
		and $r(\cdot, \phi(\cdot))$ is continuous.
		Since $\{{\bf x}^{k}\} \subset X$ (see item(i)),
		$\{\|D H({\bf x}^k, \lambda_k)^{-1}\|\}$ and $\{ \|r({\bf x}^k, \lambda_k)\|\}$ are both bounded. Then, item (iv) holds.
	\end{proof}	
	\begin{theorem}[Global convergence]\label{th-global-converge}
		Suppose that either Assumption~\ref{ass-str-nonneg} or Assumption~\ref{ass-weak-irr} holds.
		Then, the sequence $\{(\lambda_{k}, {\bf x}^{k})\}$, 
		generated by Algorithm \ref{alg-H-y},
		converges to the unique positive $(\bssig, {\bf p})$-eigenpair of $\mathcal{A}$.
	\end{theorem}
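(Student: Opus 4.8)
The plan is to combine the compactness and monotonicity established in Lemma~\ref{lm-bdd} with a routine line-search argument showing that every accumulation point of $\{{\bf x}^k\}$ is a zero of $H$, and then to invoke the uniqueness part of the Perron--Frobenius theorem to upgrade this to convergence of the whole sequence. If the while-loop of Algorithm~\ref{alg-H-y} stops after finitely many steps, then $H({\bf x}^k,\lambda_k)=0$ for that $k$ and Remark~\ref{rm-system-Phis-c} identifies $(\lambda_k,{\bf x}^k)$ with the positive $(\bssig,{\bf p})$-eigenpair, so I would assume the generated sequence $\{({\bf x}^k,\lambda_k)\}$ is infinite. Note that Assumption~\ref{ass-str-nonneg} or~\ref{ass-weak-irr}, via Lemma~\ref{lm-eqi-rA1}, places us in the setting of Lemma~\ref{pf-sp}(iii), so $\mathcal{A}$ has a unique positive $(\bssig,{\bf p})$-eigenvector $\barf x$, with eigenvalue $\bar\lambda=r^{(\bssig,{\bf p})}(\mathcal{A})$. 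First I would record the consequences of Lemma~\ref{lm-bdd}: $\{{\bf x}^k\}$ lies in the compact set $C\cap X\subset\Rsigpp$ that is bounded away from zero, $\{\lambda_k\}=\{\phi({\bf x}^k)\}$ is nonincreasing, $\alpha_k\delta_k\to 0$, and $\{{\bf d}^k\}$ is bounded. Next, fixing an arbitrary accumulation point ${\bf x}_*$ and a subsequence ${\bf x}^{k_j}\to{\bf x}_*\in C\cap X$, I would set $\lambda_*:=\phi({\bf x}_*)=\lim_j\lambda_{k_j}$ (continuity of $\phi$) and use that $DH(\cdot,\phi(\cdot))$ is continuous and nonsingular on the compact set $X$ (Lemma~\ref{nonsingular}) to pass to the limit in the Newton equation \eqref{eq-nt}, obtaining $({\bf d}^{k_j},\delta_{k_j})\to({\bf d}_*,\delta_*)$, the solution of \eqref{eq-nt} at $({\bf x}_*,\lambda_*)$.

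The crucial step is to show $r({\bf x}_*,\lambda_*)=0$, which I would argue by contradiction. If $r({\bf x}_*,\lambda_*)\neq 0$, then Lemma~\ref{glo} gives $\delta_*<0$, and together with $\alpha_{k_j}\delta_{k_j}\to 0$ this forces $\alpha_{k_j}\to 0$; hence for large $j$ one has $\alpha_{k_j}<1$, so the larger trial step $\beta_j:=\alpha_{k_j}/\rho$ lies in $\{\rho^i:i\ge 0\}$ and $\beta_j\to 0$. Since ${\bf x}^{k_j}\to{\bf x}_*>0$ and $\{{\bf d}^{k_j}\}$ is bounded, ${\bf x}^{k_j}+\beta_j{\bf d}^{k_j}>0$ for large $j$, and because $\alpha_{k_j}$ is the largest step accepted in \eqref{ls}, $\beta_j$ must violate the Armijo inequality there, i.e.\ $\phi(R({\bf x}^{k_j}+\beta_j{\bf d}^{k_j}))>\phi({\bf x}^{k_j})+\sigma\beta_j\delta_{k_j}$. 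I would then apply the expansion \eqref{eq-mv-phi} of Lemma~\ref{glo} along $\beta_j$, but with the remainder taken uniform in $j$: its proof derives the remainder from the second-order Taylor expansion of the $C^2$ map $\Phi\circ R$ combined with the elementary identity \eqref{eq-miin}, and the quantities ${\bf x}^{k_j},{\bf d}^{k_j}$ range over fixed compact sets, so $\phi(R({\bf x}^{k_j}+\beta_j{\bf d}^{k_j}))=\phi({\bf x}^{k_j})+\beta_j\delta_{k_j}+O(\beta_j^2)$ with an absolute constant. Subtracting the two relations, dividing by $\beta_j>0$, and letting $j\to\infty$ gives $(1-\sigma)\delta_*\ge 0$, contradicting $\delta_*<0$ since $\sigma\in(0,1)$. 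Hence $r({\bf x}_*,\lambda_*)=0$, and since $c({\bf x}_*)=1$, Remark~\ref{rm-system-Phis-c} shows $(\lambda_*,{\bf x}_*)$ is a positive $(\bssig,{\bf p})$-eigenpair.

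To conclude, uniqueness (Lemma~\ref{pf-sp}(iii)) forces ${\bf x}_*=\barf x$ and $\lambda_*=\bar\lambda$ for every accumulation point; a bounded sequence with a single accumulation point converges to it, so ${\bf x}^k\to\barf x$, and by continuity $\lambda_k=\phi({\bf x}^k)\to\phi(\barf x)=\bar\lambda=r^{(\bssig,{\bf p})}(\mathcal{A})$, which is the assertion. I expect the main obstacle to be justifying that the $O(\beta_j^2)$ remainder in the expansion of $\phi\circ R$ along the rejected trial steps is uniform in $j$; this is precisely where compactness of $X$, boundedness of $\{{\bf d}^k\}$, and the smoothness of $\Phi\circ R$ — as opposed to the nonsmooth $\phi$, whose $\max$-structure is handled exactly as in \eqref{eq-miin} — all come into play.
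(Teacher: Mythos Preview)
Your proposal is correct and follows essentially the same line of argument as the paper's proof: both argue by contradiction that any accumulation point ${\bf x}_*$ with $r({\bf x}_*,\lambda_*)\neq 0$ would force $\delta_*<0$, hence $\alpha_{k_j}\to 0$, so the rejected trial step $\alpha_{k_j}/\rho$ eventually satisfies positivity but violates the Armijo inequality, and then the uniform first-order expansion \eqref{eq-mv-phi} yields $(1-\sigma)\delta_*\ge 0$, a contradiction. The only cosmetic difference is that the paper obtains $\delta_{k_t}\to\delta^*<0$ by passing to the limit in the explicit formula \eqref{eq-delta-r}, whereas you pass to the limit in the full Newton system via continuity of $DH(\cdot,\phi(\cdot))^{-1}$ on $X$; both are valid and your discussion of the uniformity of the $O(\beta_j^2)$ remainder is exactly the point the paper invokes from Lemma~\ref{lm-bdd}(i) and (iv).
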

	\begin{proof}
		If there exists a nonnegative integer $k$ such that $r({\bf x}^{k}, \lambda_k) = 0$, then Algorithm~\ref{alg-H-y} terminates in finite steps, and $(\lambda_k, {\bf x}^{k})$ is the unique positive $(\bssig, {\bf p})$-eigenpair.
		
		Suppose that $r({\bf x}^{k}, \lambda_k) \neq 0$ for all $k\in \mathbb{N}_{0}$.
		From Lemma~\ref{lm-bdd}(ii), we have $\lim_{k \rightarrow \infty} \lambda_{k} = \lambda^* 
		\ge r^{\bssig, {\bf p}}(\mathcal{A})$.
		By Lemma~\ref{lm-bdd}(i), $\{{\bf x}^{k}\}$ is bounded.
		We claim that any accumulation point of 
		$\{{\bf x}^{k}\}$ is a positive $(\bssig, {\bf p})$-eigenvector of $\mathcal{A}$.
		Since the positive eigenpair of $\mathcal{A}$ is unique, 
		we then conclude the theorem.
		
		Now, suppose on the contrary that there exists an accumulation point ${\bf x}^*$ of 
		$\{{\bf x}^{k}\}$ such that $r({\bf x}^*, \lambda^*) \neq 0$, where ${\bf x}^*>0$ (see \eqref{eq-xk-pos-lbd}).
		Since $\lambda^* = \phi({\bf x}^*)$, by Lemma~\ref{nonsingular}(i), $J^* = J({\bf x}^*, \lambda^*)$ is a nonsingular $M$-matrix.
		This, together with $\nabla c({\bf x}^*)>0$ (see \eqref{eq-nabla-c}), 
		implies that $\nabla c({\bf x}^*)^{\top}(J^*)^{-1}>0$.
		Taking $t \to \infty$ in \eqref{eq-delta-r} with $k=k_t$ yields
		\begin{equation}\label{eq-delta*}
			\lim_{t\rightarrow \infty}\delta_{k_t} 
			= - \frac{\nabla c({\bf x}^*)^{\top}(J^*)^{-1} r({\bf x}^*, \lambda^*)}
			{\nabla c({\bf x}^*)^{\top}(J^*)^{-1}{\bf x}^*} \eqqcolon \delta^* <0,
		\end{equation}
		where the inequality holds because $r({\bf x}^*, \lambda^*)\in \Rsigp \setminus\{0\}$. 
		This and Lemma~\ref{lm-bdd}(iii) yield
		$\lim_{t\rightarrow \infty}\alpha_{k_t} = 0$. 
		
		Thus, there exists an integer $N_1>0$ such that
		condition (\ref{ls}), with $k$ and $\alpha_k$ substituted respectively
		by $k_t$ and $\rho^{-1}\alpha_{k_t}$,
		fails for all integers $t\ge N_1$.
		Furthermore, since $\{{\bf d}^{k}\}$ is bounded,
		it follows from \eqref{eq-xk-pos-lbd} that 
		${\bf x}^{k_t} + \rho^{-1}\alpha_{k_t} {\bf d}^{k_t} > 0
		$ holds for all $t\ge N_2$ with some integer $N_2>0$.
		Thus, 
		$
		\phi( R({\bf x}^{k_t} + \rho^{-1}\alpha_{k_t} {\bf d}^{k_t}) ) - \phi({\bf x}^{k_t}) >  \sigma \rho^{-1}\alpha_{k_t} \delta_{{k_t}}
		$
		for all $t\ge \max\{N_1, N_2\}$.
		
		One can see from Lemma~\ref{lm-bdd}(i) and (iv) that,
		\eqref{eq-mv-phi} holds uniformly for $\{{\bf x}^{k}\}$.
		Dividing both sides of the last display by $\rho^{-1}\alpha_{k_t}$, 
		and taking $t\rightarrow \infty$,
		we obtain from \eqref{eq-mv-phi} that $\delta^* \ge \sigma \delta^*$,
		which contradicts \eqref{eq-delta*}. 
		This completes the proof.
	\end{proof}
	
	\subsection{Quadratic convergence}\label{sec-qc}
	In this subsection, we establish the quadratic convergence of LS-NNM. Our analysis begins with two key lemmas.
		\begin{lemma}\label{lm-phi}
		Consider \eqref{eq-def-H} and \eqref{opt-phi-c=1}.
		Suppose that either Assumption~\ref{ass-str-nonneg} 
		or Assumption~\ref{ass-weak-irr} holds.
		Let ${\bf {\bar x}}$ be a positive 
		$({\boldsymbol \sigma},{\bf  p})$-eigenvector of ${\cal A}$. 
		Then, there exist constants ${\gamma_1}>0$, $c_1>0$ and $c_2>0$ such that 
		\begin{equation}\label{x-lam-equal}
			c_1 \|{\bf x} - {\bf {\bar x}}\| \le \phi({\bf x}) - \phi({\bf {\bar x}}) \le c_2 \|{\bf x} - {\bf {\bar x}}\|,\quad \forall{\bf x} \in \mathbb{B}({\bf {\bar x}}; {\gamma_1}) \cap C.
		\end{equation}
	\end{lemma}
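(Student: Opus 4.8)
The plan is to prove the two inequalities separately. The upper estimate $\phi({\bf x})-\phi(\barf x)\le c_2\|{\bf x}-\barf x\|$ is immediate and does not use the constraint set $C$: $\phi=\max_{i\in[d],\,j_i\in[n_i]}\Phi_{i,j_i}$ is a pointwise maximum of finitely many functions that are $C^\infty$ on $\Rsigpp$ (a polynomial numerator over a positive smooth denominator), hence locally Lipschitz; one fixes $\gamma_1$ small enough that $\overline{\mathbb B}(\barf x;\gamma_1)\subset\Rsigpp$ and sets $c_2:=\max_{i,j_i}\sup_{\mathbb B(\barf x;\gamma_1)}\|\nabla\Phi_{i,j_i}\|$.

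For the lower estimate, write $\bar\lambda:=\phi(\barf x)=r^{\sigp}({\cal A})$ and $J_*:=J(\barf x,\bar\lambda)$. Since $\barf x>0$ is a $\sigp$-eigenvector, $\psi_{p_i}(\barf x_i)=\barf x_i^{[p_i-1]}$, so $\Phi_{i,j_i}(\barf x)=\bar\lambda$ for \emph{every} index $(i,j_i)$ (all constraints of the max are active at $\barf x$) and $r(\barf x,\bar\lambda)=0$; then \eqref{eq-J-def} gives $J_*=-\diag(\barf x)D\Phi(\barf x)$ and \eqref{eq-nabla-c} gives $\nabla c(\barf x)>0$. The key step is a \textbf{sharp first-order growth estimate on the tangent hyperplane}: there is $c_0>0$ with $\max_{i,j_i}\bigl(D\Phi(\barf x){\bf d}\bigr)_{i,j_i}\ge c_0\|{\bf d}\|$ for all ${\bf d}\in\Rsig$ with $\nabla c(\barf x)^\top{\bf d}=0$. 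By positive homogeneity and compactness of $\{{\bf d}:\nabla c(\barf x)^\top{\bf d}=0,\ \|{\bf d}\|=1\}$ it suffices to exclude a nonzero ${\bf d}$ with $\nabla c(\barf x)^\top{\bf d}=0$ and $D\Phi(\barf x){\bf d}\le0$, i.e.\ $J_*{\bf d}\ge0$. Under Assumption~\ref{ass-str-nonneg}, Lemma~\ref{nonsingular}(i) makes $J_*$ a nonsingular $M$-matrix, so $J_*^{-1}\ge0$ (Lemma~\ref{lm-ZM}) and ${\bf d}=J_*^{-1}(J_*{\bf d})\ge0$; with $\nabla c(\barf x)>0$ and $\nabla c(\barf x)^\top{\bf d}=0$ this forces ${\bf d}=0$. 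Under Assumption~\ref{ass-weak-irr}, Lemma~\ref{nonsingular}(ii) makes $J_*$ an irreducible singular $M$-matrix; pairing $J_*{\bf d}\ge0$ with a positive left null vector of $J_*$ (obtained by applying Lemma~\ref{lm-irr-singular-M} to $J_*^\top$) gives $J_*{\bf d}=0$, and since $\ker J_*$ is one-dimensional and contains the positive vector ${\bf p}^{[-1]}\otimes\barf x$ (by \eqref{eq-J-p-x} with $\sum_i|\sigma_i|/p_i=1$ and $r(\barf x,\bar\lambda)=0$) we get ${\bf d}=t({\bf p}^{[-1]}\otimes\barf x)$, whence $\nabla c(\barf x)^\top{\bf d}=0$ forces $t=0$. (Alternatively, one avoids invoking one-dimensionality of $\ker J_*$ by subtracting a suitable multiple of ${\bf p}^{[-1]}\otimes\barf x$ from ${\bf d}$ to produce a nonnegative null vector with a zero entry and applying the second assertion of Lemma~\ref{lm-irr-singular-M}.)

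It remains to transfer this to the curved set $C$. For ${\bf x}\in C$ close to $\barf x$ put ${\bf d}:={\bf x}-\barf x$; since $c$ is $C^2$ on $\Rsigpp$ and $c({\bf x})=c(\barf x)=1$, we have $\nabla c(\barf x)^\top{\bf d}=O(\|{\bf d}\|^2)$, so the orthogonal projection ${\bf d}_T$ of ${\bf d}$ onto $\{\nabla c(\barf x)^\top\cdot=0\}$ satisfies $\|{\bf d}-{\bf d}_T\|=O(\|{\bf d}\|^2)$ and $\|{\bf d}_T\|\ge\tfrac12\|{\bf d}\|$ for $\|{\bf d}\|$ small. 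Using $\phi({\bf x})\ge\Phi_{i,j_i}({\bf x})$, $\Phi_{i,j_i}(\barf x)=\bar\lambda=\phi(\barf x)$, a (uniform, finitely many indices) second-order Taylor expansion of the $\Phi_{i,j_i}$'s about $\barf x$, the growth estimate applied to ${\bf d}_T$, and $\|D\Phi(\barf x)({\bf d}-{\bf d}_T)\|=O(\|{\bf d}\|^2)$, one gets
\[
\phi({\bf x})-\phi(\barf x)\ \ge\ \max_{i,j_i}\bigl(D\Phi(\barf x){\bf d}\bigr)_{i,j_i}-O(\|{\bf d}\|^2)\ \ge\ c_0\|{\bf d}_T\|-O(\|{\bf d}\|^2)\ \ge\ \tfrac{c_0}{2}\|{\bf d}\|-O(\|{\bf d}\|^2),
\]
and shrinking $\gamma_1$ so that the $O(\|{\bf d}\|^2)$ part is at most $\tfrac{c_0}{4}\|{\bf d}\|$ yields the lower bound with $c_1=c_0/4$.

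The main obstacle is the sharp first-order growth estimate: one must show that $\barf x$ is not merely a minimizer of $\phi$ on $C$ (which Theorems~\ref{th-str-nonneg} and~\ref{thr--weak-irr} already provide) but a minimizer with \emph{linear} growth, and this is precisely where the $M$-matrix structure of $J_*=-\diag(\barf x)D\Phi(\barf x)$ from Lemma~\ref{nonsingular} is indispensable; in the weakly irreducible case the additional delicate point is to kill the neutral direction ${\bf p}^{[-1]}\otimes\barf x$, which the normalization $c({\bf x})=1$ removes via $\nabla c(\barf x)>0$. Reconciling the exact tangent hyperplane with the quadratically curved feasible set is routine and handled by the projection-plus-Taylor argument above.
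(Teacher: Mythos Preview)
Your proposal is correct and follows essentially the same route as the paper. Both proofs reduce the lower bound to showing that no nonzero tangent direction ${\bf d}$ (i.e., with $\nabla c(\barf x)^\top{\bf d}=0$) can satisfy $D\Phi(\barf x){\bf d}\le 0$, and both dispatch this via the $M$-matrix structure of $-D\Phi(\barf x)$ (equivalently $J_*=-\diag(\barf x)D\Phi(\barf x)$) together with $\nabla c(\barf x)>0$. The only organizational difference is that the paper runs the contradiction directly with a sequence ${\bf x}^k\in C$ (so the limit direction $\bar{\bf u}$ automatically satisfies $\nabla c(\barf x)^\top\bar{\bf u}=0$, absorbing your projection step), and it invokes Lemma~\ref{lm-Phi} for $-D\Phi(\barf x)$ rather than Lemma~\ref{nonsingular} for $J_*$; your two-stage ``tangent estimate $+$ projection-and-Taylor transfer'' is a clean but slightly longer packaging of the same argument.
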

	
	\begin{proof}
		Since $\Phi$ in \eqref{eq-Phi-def} is continuously differentiable 
		at ${\bf \bar{x}}>0$,
		there exist constants $c_2>0$ and ${\rho}_1>0$ such that
		$
		\|\Phi({\bf x}) - \Phi({\bf {\bar x}})\| \le c_2 \|{\bf x} - {\bf {\bar x}}\|
		$
		for all ${\bf x} \in \mathbb{B}({\bf {\bar x}}; {\rho}_1) \cap C$.
		Since ${\bf \bar x}$ is a $(\bssig, {\bf p})$-eigenvector, we have
		$\Phi_{i,j_i}({\bf \bar x}) \equiv \phi({\bf \bar x})$ for all $i\in[d], j_{i}\in [n_i]$. Then, for all ${\bf x} \in \mathbb{B}({\bf {\bar x}}; {\rho}_1) \cap C$,
		\begin{equation*}
			\begin{split}
				\phi({\bf x}) - \phi({\bf \bar x}) 
				& = \max_{i\in [d],j_i\in [n_i]}
				\left(\Phi_{i,j_i}({\bf x}) - \Phi_{i,j_i}({\bf \bar x})\right) 
				\le \|\Phi({\bf x}) - \Phi({\bf {\bar x}})\| \le c_2 \|{\bf x} - {\bf {\bar x}}\|.
			\end{split}
		\end{equation*}
		
		Next, we claim that there exist constants $a>0$ and ${\rho}_2>0$ such that
		\begin{equation}\label{eq-c3}
			\max_{i\in [d],j_i\in [n_i]}
			\left(\nabla\Phi_{i,j_i}({\bf {\bar x}})^{\top}
			\frac{{\bf x} - {\bf {\bar x}}}{ \|{\bf x} - {\bf {\bar x}}\|}\right) \ge a, 
			\quad \forall {\bf x} \in \mathbb{B}({\bf {\bar x}}; {\rho}_2) 
			\cap C \setminus \{ {\bf \bar{x}}\}.
		\end{equation}
		Suppose on the contrary that there exists a sequence $\{{\bf x}^k\}\subset C\setminus\{{\bf {\bar x}}\}$  such that $\lim_{k\to\infty}{\bf x}^k = {\bf {\bar x}}$,
		$
		\max_{i\in [d],j_i\in [n_i]}
		\nabla\Phi_{i,j_i}({\bf {\bar x}})^{\top} {\bf u}^{k} \le \frac{1}{1+k}
		$
		with
		${\bf u}^k \coloneqq ({\bf x}^k - {\bf {\bar x}})/ \|{\bf x}^k - {\bf {\bar x}}\|$ 
		for all $k\ge 0.$
		Since $\{{\bf u}^k\}$ is bounded, without loss of generality, 
		by passing a subsequence if necessary, 
		we assume that $\lim_{k\rightarrow\infty} {\bf u}^k = {\bf \bar u}$.
		Then,
		\begin{equation}\label{eq-ubar}
			\|{\bf {\bar u}}\| \overset{\mathrm{(a)}}{=} 1 
			\quad \mbox{and} \quad
			-D\Phi({\bf {\bar x}}) {\bf \bar u} \overset{\mathrm{(b)}}{\ge} 0.
		\end{equation}
		
		Since $c$ given in (\ref{eq-Phi-def}) is continuously differentiable 
		in a neighborhood of ${\bf \bar x}$, then
		\begin{equation*}
			0 = c({\bf x}^{k}) - c({\bf \bar x}) 
			= \nabla c({\bf \bar{x}})^{\top}({\bf x}^{k} - {\bf \bar x}) 
			+ o(\|{\bf x}^{k} - {\bf \bar x}\|) 
			\quad \mbox{as} \quad k\to \infty.
		\end{equation*}
		Dividing both sides of the above equality 
		by $\|{\bf x}^{k} - {\bf \bar x}\|$, 
		and taking $k\to \infty$, 
		we obtain that $\nabla c({\bf \bar{x}})^{\top}{\bf \bar u} = 0$.
		
		If Assumption~\ref{ass-str-nonneg} holds, by Lemma~\ref{lm-Phi}(ii), $-D\Phi({\bf {\bar x}})$ is a nonsingular M-matrix.
		Combining this with (b) in \eqref{eq-ubar}, we have ${\bf {\bar u}}\ge 0.$
		If Assumption~\ref{ass-weak-irr} holds, by Lemma~\ref{lm-Phi}(iii), $-D\Phi({\bf {\bar x}})$ is a singular irreducible M-matrix.
		Invoking Lemma \ref{lm-irr-nonsing-M}, we can obtain that ${\bf {\bar u}}\ge 0$ or ${\bf {\bar u}}\le 0$.
		In both cases, the fact $\nabla c({\bf \bar{x}})^{\top}{\bf \bar u} = 0$ and $\nabla c({\bf \bar{x}})>0$ (see \eqref{eq-nabla-c}) leads to ${\bf {\bar u}} = 0$, which contradicts (a) in \eqref{eq-ubar}.
		
		Finally, since $\Phi$ is continuously differentiable at ${\bf \bar{x}}$,
		there exists a constant ${\rho}_3>0$ such that
		for all $i\in [d],j_i\in [n_i]$
		\begin{equation*}
			\Phi_{i,j_i}({\bf x}) - \Phi_{i,j_i}({\bf \bar x})
			\ge \nabla \Phi_{i, j_i}({\bf \bar x})^{\top} 
			({\bf x} - {\bf \bar x}) - \tfrac{a}{2} \| {\bf x} - {\bf \bar x}\|,
			\quad \forall {\bf x} \in \mathbb{B}({\bf {\bar x}}; {\rho}_3) 
			\cap C.
		\end{equation*}
		This, together with \eqref{eq-c3}, implies that for all
		${\bf x} \in \mathbb{B}({\bf {\bar x}}; \min\{{\rho}_2, {\rho}_3\}) 
		\cap C$,
		\begin{equation*}
			\begin{split}
				\phi({\bf x}) - \phi({\bf {\bar x}})
				&\textstyle = \max\limits_{i\in [d],j_i\in [n_i]}
				\left(\Phi_{i,j_i}({\bf x}) - \Phi_{i,j_i}({\bf \bar x})\right)
				\ge \tfrac{a}{2} \|{\bf x} - {\bf {\bar x}}\|.
			\end{split}
		\end{equation*}
		Set $\gamma_1 = \min\{{\rho}_1, {\rho}_2, {\rho}_3\}$ 
		and $c_1 = a/2$, then we conclude the proof.
	\end{proof}
	
		\begin{lemma}\label{lm-quad}
		Consider \eqref{eq-def-H} and \eqref{opt-phi-c=1}.
		Suppose that either Assumption \ref{ass-str-nonneg} 
		or Assumption~\ref{ass-weak-irr} holds.
		Let ${\bf \bar x}$ be the unique positive 
		$({\boldsymbol \sigma},{\bf  p})$-eigenvector of ${\cal A}$
		and let $\bar \lambda = \phi({\bf \bar x})$. 
		Let ${\bf x}^{k} \in C$ and $\lambda_{k} = \phi({\bf x}^{k})$.
		Let $({\bf d}^{k}, \delta_{k})$ be the solution of \eqref{eq-nt}.
		Let $\gamma_1$ be given in Lemma~\ref{lm-phi}.
		Then, there exist constants $\gamma_2 \in (0, \gamma_1]$ and $c_3>0$
		such that $\mathbb{B}({\bf \bar x}; \gamma_2) \subset \Rsigpp$, 
		and if ${\bf x}^{k} \in \mathbb{B}({\bf \bar x}; \gamma_2)\cap C$, then
		${\bf x}^{k} + {\bf d}^{k}, R({\bf x}^{k} + {\bf d}^{k}) \in \mathbb{B}({\bf \bar x}; \gamma_2)$, and
		\begin{alignat*}{2}
			\mathrm{(i)} \  & |\lambda_{k} + \delta_{k} - \bar \lambda|
			\le c_3 \| {\bf x}^{k} - {\bf \bar x}\|^2;
			\quad  
			&\mathrm{(ii)}\  \|R({\bf x}^{k} + {\bf d}^{k}) - {\bf \bar x}\|
			\le c_3 \| {\bf x}^{k} - {\bf \bar x}\|^2; \ \ \
			\\ \mathrm{(iii)}\  & \|R({\bf x}^{k} + {\bf d}^{k}) - {\bf \bar x}\|
			\le \tfrac{1}{2} \| {\bf x}^{k} - {\bf \bar x}\|;
			\qquad 
			&\mathrm{(iv)}\   |\phi(R({\bf x}^{k} + {\bf d}^{k})) - \bar \lambda|
			\le c_3 \| {\bf x}^{k} - {\bf \bar x}\|^2.
		\end{alignat*}
	\end{lemma}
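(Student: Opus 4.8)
The plan is to reduce items (i)--(iv) to the classical local quadratic convergence of Newton's method for the smooth system $H$ in \eqref{eq-def-H}, combined with the two-sided Lipschitz estimate of Lemma~\ref{lm-phi} and elementary properties of the retraction $R$ in \eqref{eq-R-def}. The three starting observations are: by \eqref{eq-nt}, the pair $({\bf d}^k,\delta_k)$ is exactly the classical Newton step $({\bf d}^k,\delta_k)=-DH({\bf x}^k,\lambda_k)^{-1}H({\bf x}^k,\lambda_k)$ at the point $({\bf x}^k,\lambda_k)$; the point $(\barf x,\bar\lambda)$ is a root of $H$, since $\barf x$ is a positive $(\bssig,{\bf p})$-eigenvector (so $\Phi(\barf x)=\bar\lambda{\bf 1}$, whence $r(\barf x,\bar\lambda)=0$) with $c(\barf x)=1$; and $DH(\barf x,\bar\lambda)$ is nonsingular by Lemma~\ref{nonsingular} applied with ${\bf x}=\barf x\in\Rsigpp$ and $\lambda=\phi(\barf x)=\bar\lambda$.

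First I would fix the radius $\gamma_2$. Since $\barf x\in\Rsigpp$, the ball $\mathbb{B}(\barf x;\gamma_2)$ lies in $\Rsigpp$ for all sufficiently small $\gamma_2$, and I also require $\gamma_2\le\gamma_1$. On $\Rsigpp\times{\mathbb R}$ the components of $\Phi$ and $c$ are smooth, so $H$ is $C^2$ there; hence, by the standard local analysis of Newton's method (using $H(\barf x,\bar\lambda)=0$ and the nonsingularity of $DH(\barf x,\bar\lambda)$), there exist a ball $U$ around $(\barf x,\bar\lambda)$ and a constant $\tilde c>0$ such that $DH$ is invertible on $U$ with uniformly bounded inverse, and the Newton iterate from any point of $U$ is $\tilde c\cdot(\text{distance to }(\barf x,\bar\lambda))^2$ close to $(\barf x,\bar\lambda)$. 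By Lemma~\ref{lm-phi}, whenever ${\bf x}^k\in\mathbb{B}(\barf x;\gamma_2)\cap C$ we have $0\le\lambda_k-\bar\lambda=\phi({\bf x}^k)-\phi(\barf x)\le c_2\|{\bf x}^k-\barf x\|$, hence $\|({\bf x}^k,\lambda_k)-(\barf x,\bar\lambda)\|\le(1+c_2)\|{\bf x}^k-\barf x\|$, and after shrinking $\gamma_2$ we get $({\bf x}^k,\lambda_k)\in U$.

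Next I would simply read off the conclusions. The Newton estimate on $U$ gives
\[
\left\|\bigl({\bf x}^k+{\bf d}^k,\,\lambda_k+\delta_k\bigr)-(\barf x,\bar\lambda)\right\|\le\tilde c\,\left\|({\bf x}^k,\lambda_k)-(\barf x,\bar\lambda)\right\|^2\le\tilde c(1+c_2)^2\|{\bf x}^k-\barf x\|^2 .
\]
Its $\lambda$-component is precisely (i), and its ${\bf x}$-component gives $\|{\bf x}^k+{\bf d}^k-\barf x\|\le\tilde c(1+c_2)^2\|{\bf x}^k-\barf x\|^2$; shrinking $\gamma_2$ so that $\tilde c(1+c_2)^2\gamma_2\le1$ forces ${\bf x}^k+{\bf d}^k\in\mathbb{B}(\barf x;\gamma_2)\subset\Rsigpp$, so in particular $c({\bf x}^k+{\bf d}^k)>0$ and $R({\bf x}^k+{\bf d}^k)$ is defined. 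Since $c(\barf x)=1$ we have $R(\barf x)=\barf x$, and $R$ is $C^1$ near $\barf x$, hence locally Lipschitz with some constant $L_R$; therefore
\[
\|R({\bf x}^k+{\bf d}^k)-\barf x\|=\|R({\bf x}^k+{\bf d}^k)-R(\barf x)\|\le L_R\|{\bf x}^k+{\bf d}^k-\barf x\|\le L_R\tilde c(1+c_2)^2\|{\bf x}^k-\barf x\|^2 ,
\]
which is (ii) once $c_3\ge L_R\tilde c(1+c_2)^2$. Shrinking $\gamma_2$ once more so that $c_3\gamma_2\le\tfrac12$ turns (ii) into (iii) and, in particular, guarantees $R({\bf x}^k+{\bf d}^k)\in\mathbb{B}(\barf x;\gamma_2)$. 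Finally, $R$ maps $\Rsigpp$ into $C$, so $R({\bf x}^k+{\bf d}^k)\in\mathbb{B}(\barf x;\gamma_1)\cap C$, and Lemma~\ref{lm-phi} together with (ii) yields
\[
0\le\phi\bigl(R({\bf x}^k+{\bf d}^k)\bigr)-\bar\lambda\le c_2\,\|R({\bf x}^k+{\bf d}^k)-\barf x\|\le c_2c_3\|{\bf x}^k-\barf x\|^2 ,
\]
which is (iv). Enlarging $c_3$ to absorb the constants appearing in (i) and (iv) completes the argument.

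Most of these inequalities are routine once the Newton step has been identified with the classical one. The step that requires the most care is the choice of neighborhood in the second paragraph: one must certify that, although $\lambda_k=\phi({\bf x}^k)$ depends on ${\bf x}^k$ only in a Lipschitz (not smooth) way, the pair $({\bf x}^k,\lambda_k)$ still falls inside the region of quadratic convergence and inside the set where $DH(\cdot)^{-1}$ is bounded — this is exactly where the upper Lipschitz bound of Lemma~\ref{lm-phi} is invoked. A minor but genuine bookkeeping point is coordinating the several successive shrinkings of $\gamma_2$ (to secure $\mathbb{B}(\barf x;\gamma_2)\subset\Rsigpp$, $\gamma_2\le\gamma_1$, $({\bf x}^k,\lambda_k)\in U$, $\tilde c(1+c_2)^2\gamma_2\le1$, and $c_3\gamma_2\le\tfrac12$) so that all stated conclusions hold simultaneously.
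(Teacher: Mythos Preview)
Your proof is correct and follows the same overall scheme as the paper: invoke the classical local quadratic convergence of Newton's method for $H$ (using $H(\barf x,\bar\lambda)=0$ and the nonsingularity of $DH(\barf x,\bar\lambda)$ from Lemma~\ref{nonsingular}), transfer the quadratic bound from $({\bf x}^k,\lambda_k)$-space to ${\bf x}^k$-space via the upper Lipschitz bound of Lemma~\ref{lm-phi}, and then push through the retraction $R$.

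The one genuine difference is how you handle item~(ii). The paper expands $R$ to second order around ${\bf x}^k$, uses $\nabla c({\bf x}^k)^\top{\bf d}^k=0$ (the second row of the Newton system, since $c({\bf x}^k)=1$) together with \eqref{eq-DR-def} to kill the first-order term, and obtains $\|R({\bf x}^k+{\bf d}^k)-({\bf x}^k+{\bf d}^k)\|\le L_R\|{\bf d}^k\|^2$; then the triangle inequality finishes. You instead observe that $R(\barf x)=\barf x$ and use only first-order Lipschitz continuity of $R$ near $\barf x$ to get $\|R({\bf x}^k+{\bf d}^k)-\barf x\|\le L_R\|{\bf x}^k+{\bf d}^k-\barf x\|$, which is already quadratic in $\|{\bf x}^k-\barf x\|$ by the Newton estimate. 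Your route is shorter and does not need the tangency condition $\nabla c({\bf x}^k)^\top{\bf d}^k=0$ at all; the paper's route is more structural (it shows $R$ perturbs the Newton point only at second order along the manifold), but that extra information is not used downstream. One small bookkeeping caveat: when you ``enlarge $c_3$'' at the end to cover (i) and (iv), make sure the final $c_3$ is fixed \emph{before} you impose $c_3\gamma_2\le\tfrac12$; otherwise the last shrinking of $\gamma_2$ and the last enlargement of $c_3$ chase each other. Fixing $c_3\coloneqq\max\{1,L_R,c_2 L_R\}\cdot\tilde c(1+c_2)^2$ first and then choosing $\gamma_2$ resolves this cleanly.
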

	\begin{proof}
		By Lemmas~\ref{pf-sp} and \ref{lm-eqi-rA1}, 
		$\mathcal{A}$ has a unique positive 
		$(\bssig, {\bf p})$-eigenvector ${\bf \bar x}$
		associated with the $(\bssig, {\bf p})$-eigenvalue $\bar\lambda = \phi({\bf \bar x}) = r^{({\boldsymbol \sigma}, {\bf  p})}({\mathcal A})$.
		Observe that the mapping $H$ in \eqref{eq-def-H}
		is continuously differentiable 
		and $D H$ in \eqref{eq-nabla-H} is Lipschitz continuous
		on a neighborhood of $({\bf {\bar x}}, \bar\lambda)$,
		and that $D H({\bf {\bar x}}, \bar\lambda)$ is nonsingular (see Lemma~\ref{nonsingular}).
		It is well known that, under these conditions, 
		the classical Newton method enjoys a local
		quadratic convergence.
		Specifically, there exist constants $\rho_1>0$
		and $a_1>0$ 
		such that $\mathbb{B}(({\bf \bar x}, \bar{\lambda}); \rho_1) \subset \Rsigpp\times \R_{++}$,
		and, moreover, if $(\tilde{\bf x}^k, \tilde\lambda_{k}) 
		\in \mathbb{B}(({\bf \bar x}, \bar{\lambda});\rho_1)$, 
		then the linear system (\ref{eq-nt}) 
		with $({\bf x}^k,\lambda_k) = (\tilde{\bf x}^k,\tilde\lambda_{k})$
		has a unique solution $(\tilde{\bf d}^k,\tilde\delta_k)$ satisfying
		\begin{equation}
			\label{eq-nt-quad}
			\left\|
			\begin{pmatrix}
				\tilde{\bf x}^k + \tilde{\bf d}^k - {\bf {\bar x}},
				\tilde\lambda_{k} + \tilde\delta_k - \bar\lambda
			\end{pmatrix}
			\right\| \le a_1 \left\|
			\begin{pmatrix}
				\tilde{\bf x}^k  - {\bf {\bar x}},
				\tilde\lambda_{k} - \bar\lambda
			\end{pmatrix}
			\right\|^2.
		\end{equation}
		
		Since $R$ in \eqref{eq-R-def} is twice continuously differentiable 
		on $\mathrm{cl}(\mathbb{B}({\bf \bar x}; \rho_1))$,
		there exists a constant $L_{R}>0$ such that
		\begin{equation}\label{eq-R-tcd}
			\|R({\bf x}) - R({\bf z}) - DR({\bf z})({\bf x} - {\bf z})\|
			\le L_{R}\|{\bf x} - {\bf z}\|^2, \quad \forall {\bf x},{\bf z} 
			\in \mathbb{B}({\bf \bar x}; \rho_1). 
		\end{equation}
		
		Define the constants
		\begin{align*}
			& \beta_1 \coloneqq a_1(1 + c_2)^2,
			\quad \rho_2 \coloneqq \min\{\rho_1, \gamma_1, \tfrac{\rho_1}{(1+c_2)}, \tfrac{1}{2\beta_1}\},
			\\ &  \beta_2 \coloneqq \beta_1 + L_{R}(1 + \beta_1 \rho_2)^2,\quad \gamma_2 \coloneqq \min\{\rho_2, \tfrac{1}{2\beta_2}\}
			\quad \mbox{and} \quad 
			c_3 \coloneqq \max\{\beta_1, \beta_2, c_3\beta_2\},
		\end{align*}
		where $\gamma_1$ and $c_2$ are given in Lemma~\ref{lm-phi}.
		Clearly, $\mathbb{B}({\bf \bar x}; \gamma_2) \subseteq \mathbb{B}({\bf \bar x}; \rho_1) \subset \Rsigpp$.
		
		Now, suppose that ${\bf x}^k \in \mathbb{B}({\bf \bar x}; \gamma_2)\cap C$.
		We claim that items~(i)-(vi) hold. Indeed, 
		\begin{equation*}\label{eq-x-lam-ub-x}
			\|({\bf x}^k - {\bf \bar x}, \lambda_{k} -  \bar{\lambda})\| 
			\le \|{\bf x}^{k} - {\bf \bar x}\| + |\lambda_k - \bar{\lambda}|
			\le (1 + c_2) \|{\bf x}^{k} - {\bf \bar x}\| 
			\le\rho_1,
		\end{equation*}
		where the second inequality holds because 
		of $\gamma_2 \le \gamma_1$ and (\ref{x-lam-equal}),
		and the last one follows from $\gamma_2 \le \rho_1/(1+c_2)$.
		This means $({\bf x}^k, \lambda_{k}) \in \mathbb{B}(({\bf \bar x}, \bar{\lambda});\rho_1).$
		Consequently, \eqref{eq-nt-quad} holds for $({\bf x}^k, \lambda_{k})$
		and $({\bf d}^k, \delta_{k})$, namely
		\begin{equation}
			\begin{split}\label{eq-nt-xk1-lambda1}
				&\left\|
				\begin{pmatrix}
					{\bf x}^k + {\bf d}^k - {\bf {\bar x}},
					\lambda_{k} + \delta_k - \bar\lambda
				\end{pmatrix}
				\right\| 
				\le a_1 \left\|
				\begin{pmatrix}
					{\bf x}^k - {\bf {\bar x}},
					\lambda_{k} - \bar\lambda
				\end{pmatrix}
				\right\|^2
				\\& \le a_1 (1 + c_2)^2\|{\bf x}^k  - {\bf {\bar x}}\|^2
				=\beta_1\|{\bf x}^k  - {\bf {\bar x}}\|^2,
			\end{split}
		\end{equation}
		This implies
			$\|{\bf x}^{k} + {\bf d}^{k} - {\bf \bar x}\|
			\le \beta_1 \| {\bf x}^{k} - {\bf \bar x}\|^2$
			and
			$|\lambda_{k} + \delta_{k} - \bar \lambda| 
			\le \beta_1 \| {\bf x}^{k} - {\bf \bar x}\|^2.$
		From $\beta_1 \le c_3$, item (i) follows.
		
		Next, from \eqref{eq-nt-xk1-lambda1}, ${\bf x}^k \in \mathbb{B}({\bf \bar x}; \gamma_2)$,
		and $\gamma_2 \le \rho_2 \le \frac{1}{2\beta_1}$,
		it follows that
		\begin{equation*}\label{eq-nt-xk1-lambda2}
			\left\|
			\begin{pmatrix}
				{\bf x}^k + {\bf d}^k - {\bf {\bar x}},
				\lambda_{k} + \delta_k - \bar\lambda
			\end{pmatrix}
			\right\|
			\le \beta_1 \gamma_2 \|{\bf x}^k  - {\bf {\bar x}}\| 
			\le \tfrac{1}{2}\|{\bf x}^k  - {\bf {\bar x}}\| 
			\le \tfrac{\gamma_2}{2}.
		\end{equation*}
		This shows ${\bf x}^k + {\bf d}^k \in \mathbb{B}({\bf \bar x}; \gamma_2)$.
		In particular, ${\bf x}^k + {\bf d}^k>0$,
		and hence $R({\bf x}^{k} + {\bf d}^{k}) \in C$ is well-defined.
		
		Furthermore, by the triangle inequality, 
		\begin{equation}
			\label{eq-d-bd}
			\begin{split}
				\|{\bf d}^{k}\| & \le  \|{\bf x}^k - {\bf {\bar x}}\| + \|{\bf x}^k + {\bf d}^k - {\bf {\bar x}}\|
				\le \|{\bf x}^k - {\bf {\bar x}}\| +  \beta_1 \| {\bf x}^{k} - {\bf \bar x}\|^2
				\\& \le (1 + \beta_1 \gamma_2)\|{\bf x}^k - {\bf {\bar x}}\| \le (1 + \beta_1 \rho_2)\|{\bf x}^k - {\bf {\bar x}}\|.
			\end{split}
		\end{equation}
		Using the triangle inequality
		and \eqref{eq-R-tcd} (since ${\bf x}^{k}, {\bf x}^k + {\bf d}^k\in \mathbb{B}({\bf \bar x}; \gamma_2)$ and $\gamma_2\le \rho_1$),
		\begin{equation*}
			\begin{split}
				& \|R({\bf x}^k + {\bf d}^k) - {\bf x}^{k} - {\bf d}^k\|
				\le \|R({\bf x}^k) + DR({\bf x}^{k}){\bf d}^k - {\bf x}^{k} - {\bf d}^k\|
				+ L_{R} \|{\bf d}^{k}\|^2 
				\\ & \overset{\mathrm{(a)}}{=} 
				\|\tfrac{1}{d} {\bf x}^{k} \nabla c({\bf x}^{k})^{\top}{\bf d}^k\|
				+ L_{R}\|{\bf d}^{k}\|^2 
				\overset{\mathrm{(b)}}{=} L_{R}\|{\bf d}^{k}\|^2  
				\overset{\mathrm{(c)}}{\le} L_R (1 + \beta_1 \rho_2)^2 \|{\bf x}^k - {\bf {\bar x}}\|^2,
			\end{split}
		\end{equation*}
		where (a) follows from \eqref{eq-DR-def} and the fact ${\bf x}^k \in C$,
		and (b) holds since $\nabla c({\bf x}^{k})^{\top}{\bf d}^k = 0$
		(see \eqref{eq-nt} with $c({\bf x}^{k}) = 1$),
		and (c) is deduced from \eqref{eq-d-bd}.
		Using the triangle inequality again, we obtain that
		\begin{equation}
			\begin{split}
				\|R({\bf x}^k + {\bf d}^k) - {\bf \bar{x}}\| 
				& \le \|{\bf x}^{k} + {\bf d}^k - {\bf \bar x}\| + \|R({\bf x}^k + {\bf d}^k) - {\bf x}^{k} - {\bf d}^k\|
				\\ & \le (\beta_1 + L_R (1 + \beta_1 \rho_2)^2) \|{\bf x}^k - {\bf {\bar x}}\|^2
				= \beta_2 \|{\bf x}^k - {\bf {\bar x}}\|^2,\label{eq-R-x-rate}
			\end{split}
		\end{equation}
		This, together with $\beta_2 \le c_3$, shows item (ii).
		
		Together with ${\bf x}^k \in \mathbb{B}({\bf \bar x}; \gamma_2)$,
		(\ref{eq-R-x-rate}) also implies that
		\begin{equation*}
			\|R({\bf x}^k + {\bf d}^k) - {\bf \bar{x}}\| 
			\le \beta_2 \gamma_2 \|{\bf x}^k - {\bf {\bar x}}\|
			\le \tfrac{1}{2}\|{\bf x}^k - {\bf {\bar x}}\| \le \tfrac{\gamma_2}{2},
		\end{equation*}
		where $\gamma_2 \le \frac{1}{2\beta_2}$ is used to get the second inequality.
		Then, item (iii) as well as $R({\bf x}^k + {\bf d}^k)\in  \mathbb{B}({\bf \bar x}; \gamma_2)$ hold.
		
		Finally, since $R({\bf x}^k + {\bf d}^k)\in \mathbb{B}({\bf \bar x}; \gamma_2)\cap C$ and
		$\gamma_2\le \gamma_1$, 
		it follows from \eqref{x-lam-equal} that
		\begin{equation*}
			\begin{split}
				0\le \phi(R({\bf x}^k + {\bf d}^k)) - \phi({\bf \bar x})
				\le c_2 \|R({\bf x}^k + {\bf d}^k) - {\bf \bar x}\| 
				\le c_2 \beta_2 \|{\bf x}^k - {\bf {\bar x}}\|^2,
			\end{split}
		\end{equation*}
		Since $c_2 \beta_2 \le c_3$, we conclude item (iv).
	\end{proof}
	
	The local quadratic convergence
	of LS-NNM is presented below.
	
	\begin{lemma}\label{th-local}
		Consider \eqref{eq-def-H} and \eqref{opt-phi-c=1}.
		Suppose that either Assumption \ref{ass-str-nonneg} 
		or Assumption~\ref{ass-weak-irr} holds. 
		Let ${\bf x}^0\in C$ and $\lambda_{0} = \phi({\bf x}^{0})$.
		Consider the iteration scheme:
		$$
		{\bf x}^{k+1}=R({\bf x}^k+{\bf d}^k)
		\quad \mbox{and} \quad
		 \lambda_{k+1}=\phi({\bf x}^{k+1}),
		 \quad \forall k=0, 1, \ldots,
		$$
		where ${\bf d}^{k}$ is given in \eqref{eq-nt}.
		If ${\bf x}^0$ is sufficiently closed to a positive $({\boldsymbol \sigma},{\bf  p})$-eigenvector ${\bf \bar x}$ of ${\cal A}$, 
		then the iteration sequences $\{{\bf x}^{k}\}$ and $\{\lambda_{k}\}$ converge quadratically to ${\bf \bar x}$ and $\bar{\lambda}$, respectively,
		where $\bar{\lambda} \coloneqq \phi({\bf \bar{x}}) = r^{({\boldsymbol \sigma}, {\bf  p})}({\mathcal A})$. 
	\end{lemma}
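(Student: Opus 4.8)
The plan is to reduce everything to the self-improving quadratic estimates already packaged in Lemma~\ref{lm-quad}, together with the two-sided comparison between $\phi({\bf x})-\phi({\bf\bar x})$ and $\|{\bf x}-{\bf\bar x}\|$ from Lemma~\ref{lm-phi}. First I would fix the constants $\gamma_1,c_1,c_2$ supplied by Lemma~\ref{lm-phi} and $\gamma_2\in(0,\gamma_1]$, $c_3>0$ supplied by Lemma~\ref{lm-quad} (recall $\mathbb{B}({\bf\bar x};\gamma_2)\subset\Rsigpp$), and interpret ``${\bf x}^0$ sufficiently close to ${\bf\bar x}$'' as ${\bf x}^0\in\mathbb{B}({\bf\bar x};\gamma_2)\cap C$, which is admissible since ${\bf x}^0\in C$ is assumed and $\lambda_0=\phi({\bf x}^0)$.

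Next I would show by induction that the entire sequence is well-defined and stays in $\mathbb{B}({\bf\bar x};\gamma_2)\cap C$. Assume ${\bf x}^k\in\mathbb{B}({\bf\bar x};\gamma_2)\cap C$ with $\lambda_k=\phi({\bf x}^k)$. By Lemma~\ref{nonsingular}, $DH({\bf x}^k,\lambda_k)$ is nonsingular, so the Newton equation \eqref{eq-nt} has a unique solution $({\bf d}^k,\delta_k)$. By Lemma~\ref{lm-quad}, ${\bf x}^k+{\bf d}^k\in\mathbb{B}({\bf\bar x};\gamma_2)\subset\Rsigpp$; in particular ${\bf x}^k+{\bf d}^k>0$ and $c({\bf x}^k+{\bf d}^k)\neq0$, so ${\bf x}^{k+1}=R({\bf x}^k+{\bf d}^k)$ is well-defined, lies in $C$, and again by Lemma~\ref{lm-quad}(iii) satisfies $R({\bf x}^k+{\bf d}^k)\in\mathbb{B}({\bf\bar x};\gamma_2)$, with $\lambda_{k+1}=\phi({\bf x}^{k+1})$. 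This closes the induction, so ${\bf x}^k\in\mathbb{B}({\bf\bar x};\gamma_2)\cap C$ for all $k$.

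With the iterates confined to this ball, convergence and the rates follow immediately. Lemma~\ref{lm-quad}(iii) gives $\|{\bf x}^{k+1}-{\bf\bar x}\|\le\tfrac12\|{\bf x}^k-{\bf\bar x}\|$, hence $\|{\bf x}^k-{\bf\bar x}\|\le 2^{-k}\|{\bf x}^0-{\bf\bar x}\|\to0$, i.e. ${\bf x}^k\to{\bf\bar x}$; then the right-hand inequality of \eqref{x-lam-equal} forces $\lambda_k=\phi({\bf x}^k)\to\phi({\bf\bar x})=\bar\lambda=r^{({\boldsymbol\sigma},{\bf p})}({\mathcal A})$. The $Q$-quadratic rate of $\{{\bf x}^k\}$ is precisely Lemma~\ref{lm-quad}(ii): $\|{\bf x}^{k+1}-{\bf\bar x}\|\le c_3\|{\bf x}^k-{\bf\bar x}\|^2$. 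For $\{\lambda_k\}$, I would combine Lemma~\ref{lm-quad}(iv), which gives $0\le\lambda_{k+1}-\bar\lambda\le c_3\|{\bf x}^k-{\bf\bar x}\|^2$ (nonnegativity from Theorem~\ref{thr-minmax}, since $\bar\lambda=\min\phi$ over $C$), with the lower bound $c_1\|{\bf x}^k-{\bf\bar x}\|\le\phi({\bf x}^k)-\phi({\bf\bar x})=\lambda_k-\bar\lambda$ of \eqref{x-lam-equal}, to obtain $\lambda_{k+1}-\bar\lambda\le(c_3/c_1^2)(\lambda_k-\bar\lambda)^2$, which is the claimed quadratic rate.

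There is essentially no new obstacle here: Lemma~\ref{lm-quad} already delivers both the quadratic self-contraction and the invariance of $\mathbb{B}({\bf\bar x};\gamma_2)$ under one step of the scheme. The only points needing care are (a) verifying that the iteration never breaks down — that each ${\bf d}^k$ exists (Lemma~\ref{nonsingular}) and that ${\bf x}^k+{\bf d}^k$ remains strictly positive so the retraction $R$ may be applied — and (b) converting the rate stated in $\|{\bf x}^k-{\bf\bar x}\|$ into a genuine quadratic rate in $|\lambda_k-\bar\lambda|$, for which the two-sided estimate \eqref{x-lam-equal} is the essential tool.
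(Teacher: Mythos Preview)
Your proposal is correct and follows essentially the same approach as the paper: both arguments use Lemma~\ref{lm-quad} to run an induction keeping $\{{\bf x}^k\}$ inside $\mathbb{B}({\bf\bar x};\gamma_2)\cap C$, extract convergence from the halving estimate (iii) and the quadratic rate from (ii), and then transfer the rate to $\{\lambda_k\}$ via the two-sided bound \eqref{x-lam-equal}. The only cosmetic difference is that you invoke Lemma~\ref{lm-quad}(iv) directly for $\lambda_{k+1}-\bar\lambda$, whereas the paper routes this through \eqref{x-lam-equal} applied at ${\bf x}^{k+1}$ combined with (ii); both are equivalent.
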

	\begin{proof}
		Let ${\bf x}^{0} \in \mathbb{B}({\bf \bar x}; \gamma_2)\cap C$,
		where $\gamma_2$ is given in Lemma~\ref{lm-quad}.
		Then, by Lemma~\ref{lm-quad}, one can prove by induction that
		for all $k=0,1,\ldots$,
		$$
		{\bf x}^{k+1} = R({\bf x}^{k} + {\bf d}^{k}) \in \mathbb{B}({\bf \bar x}; \gamma_2) \cap C
		\quad \mbox{and} \quad
		\|{\bf x}^{k+1} - {\bf \bar x}\| 
		\le \tfrac{1}{2} \|{\bf x}^{k} - {\bf \bar x}\|.
		$$
		Iterating the above inequality yields that
		$
		\|{\bf x}^{k+1} - {\bf \bar x}\| \le \tfrac{1}{2^{k+1}}\|{\bf x}^{0} - {\bf \bar x}\|
		\to 0$
		as 
		$k\to \infty.$
		By this and Lemma~\ref{lm-quad}(ii), the sequence 
		$\{{\bf x}^{k}\}$ converges to ${\bf \bar x}$ quadratically.
		
		Moreover, using \eqref{x-lam-equal}, we have
		$|\lambda_{k} - \bar \lambda| = \Theta(\|{\bf x}^{k} - {\bf \bar x}\|)$.
		This implies that $\lim_{k\to \infty} \lambda_{k} = \bar \lambda$,
		and
$	|\lambda_{k+1} - \bar \lambda| = \Theta(\|{\bf x}^{k+1} - {\bf \bar x}\|)
			\le O(\|{\bf x}^{k} - {\bf \bar x}\|^2) = O((\lambda_{k} - \bar \lambda)^2).$
	\end{proof}
	
	Finally, we establish the global quadratic convergence
	of LS-NNM.
	\begin{theorem}[Quadratic convergence]
		Suppose that either Assumption \ref{ass-str-nonneg} or 
		Assumption~\ref{ass-weak-irr} holds.
		Let ${\bf \bar x}$ be the unique $(\bssig, {\bf p})$-eigenvector of $\mathcal{A}$
		and $\bar \lambda = r^{\bssig, {\bf p}}(\mathcal{A})$.
		Then, the sequences $\{{\bf x}^{k}\}$ and $\{\lambda_{k}\}$, generated by Algorithm~\ref{alg-H-y},
        converge quadratically to 
		${\bf \bar x}$ and $\bar{\lambda}$, respectively. 
		Moreover, $\alpha_k = 1$ holds for all sufficiently large $k$.
	\end{theorem}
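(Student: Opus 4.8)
The plan is to bootstrap from the global convergence result (Theorem~\ref{th-global-converge}) together with the local analysis of Lemmas~\ref{lm-phi}, \ref{lm-quad} and~\ref{th-local}. If $r({\bf x}^k,\lambda_k)=0$ for some $k$, then Algorithm~\ref{alg-H-y} terminates with the exact positive $(\bssig,{\bf p})$-eigenpair, so we may assume $r({\bf x}^k,\lambda_k)\neq 0$ for all $k$. By Theorem~\ref{th-global-converge}, ${\bf x}^k\to{\bf \bar x}$ and $\lambda_k\to\bar\lambda$, where $\lambda_k=\phi({\bf x}^k)$ and $\bar\lambda=\phi({\bf \bar x})=r^{\sigp}(\mathcal A)$. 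Let $\gamma_2,c_1,c_2,c_3$ be the constants furnished by Lemmas~\ref{lm-phi} and~\ref{lm-quad}. Then there is an index $K$ such that, for every $k\ge K$, one has ${\bf x}^k\in\mathbb{B}({\bf \bar x};\gamma_2)\cap C$ and, in addition, $\|{\bf x}^k-{\bf \bar x}\|<\tfrac{(1-\sigma)c_1}{(1+\sigma)c_3}$.

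The central step is to show that the full step $\alpha_k=1$ is admitted by the line search~\eqref{ls} whenever $k\ge K$. Fix such a $k$. By Lemma~\ref{lm-quad}, ${\bf x}^k+{\bf d}^k\in\mathbb{B}({\bf \bar x};\gamma_2)\subset\Rsigpp$, so ${\bf x}^k+{\bf d}^k>0$; it remains to check the Armijo inequality $\phi(R({\bf x}^k+{\bf d}^k))\le\phi({\bf x}^k)+\sigma\delta_k$. Writing everything relative to $\bar\lambda$ and using $\delta_k=(\lambda_k+\delta_k-\bar\lambda)-(\lambda_k-\bar\lambda)$, one has
\begin{equation*}
\phi(R({\bf x}^k+{\bf d}^k))-\phi({\bf x}^k)-\sigma\delta_k
=\bigl[\phi(R({\bf x}^k+{\bf d}^k))-\bar\lambda\bigr]-\sigma(\lambda_k+\delta_k-\bar\lambda)-(1-\sigma)(\lambda_k-\bar\lambda).
\end{equation*}
By Lemma~\ref{lm-quad}(iv) the first bracket lies in $[0,c_3\|{\bf x}^k-{\bf \bar x}\|^2]$, by Lemma~\ref{lm-quad}(i) one has $|\lambda_k+\delta_k-\bar\lambda|\le c_3\|{\bf x}^k-{\bf \bar x}\|^2$, and by Lemma~\ref{lm-phi} one has $\lambda_k-\bar\lambda\ge c_1\|{\bf x}^k-{\bf \bar x}\|$. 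Hence the right-hand side is bounded above by $(1+\sigma)c_3\|{\bf x}^k-{\bf \bar x}\|^2-(1-\sigma)c_1\|{\bf x}^k-{\bf \bar x}\|$, which is $\le 0$ precisely because $\|{\bf x}^k-{\bf \bar x}\|<\tfrac{(1-\sigma)c_1}{(1+\sigma)c_3}$. Thus $\alpha=1$ satisfies both requirements in~\eqref{ls}, and since $1=\rho^0$ is the largest candidate, $\alpha_k=1$ for all $k\ge K$. This also proves the last assertion of the theorem.

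Once $\alpha_k=1$ for all $k\ge K$, the iteration of Algorithm~\ref{alg-H-y} coincides with the pure Newton--Noda scheme ${\bf x}^{k+1}=R({\bf x}^k+{\bf d}^k)$, $\lambda_{k+1}=\phi({\bf x}^{k+1})$ analysed in Lemma~\ref{th-local}, started from ${\bf x}^K\in\mathbb{B}({\bf \bar x};\gamma_2)\cap C$. Lemma~\ref{lm-quad}(ii)--(iii) then yields, by induction, ${\bf x}^{k+1}\in\mathbb{B}({\bf \bar x};\gamma_2)\cap C$ and $\|{\bf x}^{k+1}-{\bf \bar x}\|\le c_3\|{\bf x}^k-{\bf \bar x}\|^2$ for all $k\ge K$, so $\{{\bf x}^k\}$ converges quadratically to ${\bf \bar x}$. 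Finally, the two-sided bound~\eqref{x-lam-equal} of Lemma~\ref{lm-phi} gives $|\lambda_k-\bar\lambda|=\Theta(\|{\bf x}^k-{\bf \bar x}\|)$, whence $\lambda_k\to\bar\lambda$ and $|\lambda_{k+1}-\bar\lambda|=O(\|{\bf x}^k-{\bf \bar x}\|^2)=O((\lambda_k-\bar\lambda)^2)$, i.e.\ $\{\lambda_k\}$ converges quadratically to $\bar\lambda$.

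I expect the unit-step claim to be the main obstacle: it is exactly here that the \emph{linear} lower bound $\phi({\bf x}^k)-\bar\lambda\ge c_1\|{\bf x}^k-{\bf \bar x}\|$ from Lemma~\ref{lm-phi} is indispensable, because it forces the negative predicted decrease $\sigma\delta_k\approx\sigma(\bar\lambda-\lambda_k)$ to dominate the actual decrease $\phi(R({\bf x}^k+{\bf d}^k))-\phi({\bf x}^k)\approx\bar\lambda-\lambda_k$ up to a quadratically small error; a bound controlling only $|\phi({\bf x}^k)-\bar\lambda|$ from above would not suffice. Everything else is bookkeeping with constants already produced in Lemmas~\ref{lm-phi}--\ref{th-local} together with the global convergence of Theorem~\ref{th-global-converge}.
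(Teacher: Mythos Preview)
Your proof is correct and follows essentially the same approach as the paper: invoke global convergence to enter the neighborhood where Lemmas~\ref{lm-phi} and~\ref{lm-quad} apply, then verify the unit step by bounding $\phi(R({\bf x}^k+{\bf d}^k))-\phi({\bf x}^k)-\sigma\delta_k$ using the quadratic estimates of Lemma~\ref{lm-quad}(i),(iv) together with the linear lower bound $\lambda_k-\bar\lambda\ge c_1\|{\bf x}^k-{\bf\bar x}\|$ from Lemma~\ref{lm-phi}, and finally appeal to Lemma~\ref{th-local} for the quadratic rate. The only difference is cosmetic: you split the expression as $[\phi(R(\cdot))-\bar\lambda]-\sigma(\lambda_k+\delta_k-\bar\lambda)-(1-\sigma)(\lambda_k-\bar\lambda)$ and obtain the constant $(1+\sigma)c_3$, whereas the paper writes it as $(1-\sigma)\delta_k+[\phi(R(\cdot))-(\lambda_k+\delta_k)]$ and arrives at $(3-\sigma)c_3$; both rearrangements are equivalent and yield the same conclusion.
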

	\begin{proof}
		We claim that, for all sufficiently large $k$, both conditions in \eqref{ls}
		hold with $\alpha_k = 1$.
		Then, the quadratic convergence follows from
		Theorem~\ref{th-global-converge} and Lemma~\ref{th-local}.
		
		Let $\gamma_1$ and $c_1$ be 
		given in Lemma~\ref{lm-phi}.
		Let $\gamma_2$ and $c_3$ be given in Lemma~\ref{lm-quad}.
		Let $\gamma = \min\{\gamma_1, \gamma_2\}$.
		By Theorem~\ref{th-global-converge}, $\lim_{k\to\infty} ({\bf x}^{k}, \lambda_k) = ({\bf \bar x}, \bar \lambda)$.
		There exists an integer $N_1\ge 0$
		such that ${\bf x}^{k} \in \mathbb{B}({\bar x}; \gamma)$
		for all integers $k\ge N_1$.
		From Lemma~\ref{lm-quad}, we have 
		\begin{equation}\label{eq-x-reag}
			{\bf x}^{k} + {\bf d}^{k}\in \mathbb{B}({\bf \bar x}; \gamma) \subset \Rsigpp
			\quad\mbox{and}\quad 
			R({\bf x}^{k} + {\bf d}^{k}) \in \mathbb{B}({\bf \bar x}; \gamma),
			\quad \forall k\ge N_1.
		\end{equation}
		
		For each integer $k\ge N_1$, 
		we have ${\bf x}^{k} \in \mathbb{B}({\bf \bar x}; \gamma)\cap C$ and
		\begin{equation*}
			\begin{split}
				\delta_{k} & = -(\lambda_k - \bar \lambda) + (\lambda_k + \delta_k - \bar \lambda) 
				\le - c_1 \|{\bf x}^k - {\bf \bar x}\| + c_3\|{\bf x}^k - {\bf \bar x}\|^2,
			\end{split}
		\end{equation*}
		where the inequality holds because of \eqref{x-lam-equal} and Lemma~\ref{lm-quad}(ii).
		Furthermore, 
		\begin{equation*}
			\begin{split}
				|\phi( R({\bf x}^k +  {\bf d}^k) ) - (\lambda_{k} +  \delta_{k})|
				& \le |\phi(R({\bf x}^k +  {\bf d}^k) ) - {\bar{\lambda}}|
				+  |\lambda_{k} + \delta_k - {\bar{\lambda}}|
				 \le 2c_3\|{\bf x}^k - {\bf \bar x}\|^2
			\end{split}
		\end{equation*}
		for all $k\ge N_1,$ where the last inequality follows from Lemma~\ref{lm-quad}(ii) and (iv).
		
		Combining the last two displays, we conclude that
		\begin{equation*}
			\begin{split}
				& \phi( R({\bf x}^k +  {\bf d}^k) ) - (\lambda_{k} + \sigma  \delta_{k})
				= (1-\sigma) \delta_{k} + \phi( R({\bf x}^k +  {\bf d}^k) ) - (\lambda_{k} +  \delta_{k})
				\\ & \le  (1-\sigma) (- c_1 \|{\bf x}^k - {\bf \bar x}\| + c_3\|{\bf x}^k - {\bf \bar x}\|^2) + 2c_3\|{\bf x}^k - {\bf \bar x}\|^2
				\\ & = -(1-\sigma) c_1 \|{\bf x}^k - {\bf \bar x}\| + (3-\sigma)c_3\|{\bf x}^k - {\bf \bar x}\|^2,
				\quad \forall k\ge N_1.
			\end{split}
		\end{equation*}
		Since $\sigma\in (0, 1)$, there exists an integer $N\ge N_1$ such that
		$\phi( R({\bf x}^k +  {\bf d}^k) ) \le \lambda_{k} + \sigma  \delta_{k}$
		for all $k\ge N$.		
		Finally, combining this with \eqref{eq-x-reag}, we conclude that, 
		for all integer $k\ge N$, 
		both conditions in \eqref{ls} hold with $\alpha_k = 1$.
	\end{proof}	
	
\section{Numerical experiments}\label{sec-ne}
In this section, we study the numerical performance of Algorithm~\ref{alg-H-y},
denoted by LS-NNM, for computing the positive
\((\boldsymbol{\sigma},\mathbf{p})\)-eigenpair of nonnegative tensors.
We compare LS-NNM with the general power method (denoted by PM1) proposed in
\cite[p.~514, eq.~(20)]{GTH23}, as well as with its variants (denoted by PM2) given in
\cite[p.~515, eqs.~(21)--(22)]{GTH23}.\footnote{All numerical experiments were
conducted in MATLAB R2022a on a personal laptop equipped with an
Intel(R) Core(TM) i7-10510U CPU @ 1.80\,GHz and 16\,GB of memory,
running Microsoft Windows~11.}\footnote{According to \cite[Theorem~4.3]{GTH23} 
and Lemma~\ref{lm-eqi-rA1}, under Assumption~\ref{ass-str-nonneg},
both PM1 and PM2 converge linearly to the unique positive
\(\sigp\)-eigenpair; under
Assumption~\ref{ass-weak-irr}, PM2 converges to the unique positive
\(\sigp\)-eigenpair, whereas the convergence of PM1 remains unknown.}

For all these three methods, the initial point is set to ${\bf x}^0 = {\bf 1}\in {\mathbb R}^{{\boldsymbol \sigma}}$. 
The stopping criteria are either reaching 500 iterations or satisfying the following condition:  
\begin{equation}\label{res}
	\mathsf{Res} \coloneq \frac{\phi( {\bf {\bar x}}^{k}) - \psi( {\bf {\bar x}}^{k})}{\max\{1, \psi( {\bf {\bar x}}^{k})\}}\le 10^{-12}
    \quad \mbox{with} \quad {\bf {\bar x}}^{k} \coloneq (\|{\bf x}_{1}^{k}\|_{p_1}^{-1} {\bf x}_{1}^{k}, \ldots, \|{\bf x}_{d}^{k}\|_{p_d}^{-1} {\bf x}_{d}^{k}),
\end{equation}
where $\phi$ is given in \eqref{opt-phi-c=1}, and 
$\psi({\bf x}) \coloneqq \min_{i\in [d],j_i\in [n_i]} \Phi_{i, j_i}({\bf x})$ for all ${\bf x} \in 
\Rsigpp$, with $\Phi$ defined in \eqref{eq-Phi-def}.
Note that, for every ${\bf x}^{k} \in \mathbb{R}_{++}^{({{\boldsymbol \sigma}},{\bf p})}$, 
we have from \eqref{maximality-2} that 
$\mathsf{Res}$ provides an upper bound for the relative error: 
\begin{equation}\label{eq-rel-error}
    {| \lambda^* -
r^{({\boldsymbol \sigma},{\bf  p})}({\cal A}) |}/{\max\{1, r^{({\boldsymbol \sigma},{\bf  p})}({\cal A})\}} \le \mathsf{Res},
\end{equation}
where $\lambda^* \coloneqq \tfrac{1}{2} (\phi({\bf {\bar x}}^{k}) + \psi({\bf {\bar x}}^{k}))$.
\begin{remark}[Stopping criterion of LS-NNM]
    Suppose that either Assumption~\ref{ass-str-nonneg} or Assumption~\ref{ass-weak-irr} holds.
    Let $\{{\bf x}^{k}\}$ be generated by Algorithm~\ref{alg-H-y}.
    Then, the condition \eqref{res} can be used as the stopping criterion of LS-NNM since from \eqref{eq-xk-pos-lbd} one can obtain that 
    $\| {\bf {\bar x}}^{k} - {\bf {\bar x}}\| = O (\| {\bf  x}^{k} - {\bf {\bar x}}\|)$,
    where ${\bf {\bar x}}$ is a positive $\sigp$-eigenpair of $\mathcal{A}$. 
\end{remark}

Here, we present an third order three dimensional nonnegative tensor, with different shape partitions.
\begin{example}[{\cite[Example~7.2]{GTH23}}]\label{ex-small-1}
	Tensor $\mathcal{A}=(a_{ijk}) \in  \mathbb{R}_+^{3\times 3\times 3}$ is defined as
	$$
	a_{113} =a_{131} = a_{221} =a_{222} = a_{321} =  1,
	$$
	and $a_{ijk} = 0 $ elsewhere. Let 
	${\boldsymbol \sigma}^1 = \{ \{ 1, 2, 3\} \}$,
	${\boldsymbol \sigma}^2 = \{ \{ 1\}, \{ 2, 3\} \}$, 
    and ${\boldsymbol \sigma}^3 = \{ \{ 1\}, \{ 2\}, \{ 3\} \}$ 
	be three shape partitions of $\mathcal{A}$. 
    Then, $\mathcal{A}$ is ${\boldsymbol \sigma}^1$- and ${\boldsymbol \sigma}^3$-weakly irreducible, and ${\boldsymbol \sigma}^2$-strictly nonnegative, but not ${\boldsymbol \sigma}^2$-weakly irreducible.
\end{example}

The results about this example are listed in Table \ref{tab-small} below.
Here, `iter’, `iter-ls', and `Cpu’ represent iterations, inner iterations, 
and computing time in seconds, respectively; 
$\mathsf{Res}$ and $\lambda^*$ are given as in \eqref{res} and \eqref{eq-rel-error}, respectively.
Moreover, the third column reports whether the condition $\rho(A)<1$ (or $= 1$) in Lemma~\ref{pf-sp} is satisfied,
which follows directly Lemma~\ref{lm-eqi-rA1}.

\begin{center}
	\begin{table}[!htbp]
		\caption{Numerical results for \Cref{ex-small-1}}\label{tab-small}
		\setlength\tabcolsep{2pt}
		{\footnotesize
			\def\temptablewidth{1\textwidth}
			\begin{tabular*}{\temptablewidth}{@{\extracolsep{\fill}}ccccccc}
				\toprule
				\multirow{2}{*}{\tabincell{c}{ ${\boldsymbol \sigma}$ }} 
                & \multirow{2}{*}{${\bf  p}$}
                & $\rho(A)<1$ 
                & \multirow{2}{*}{$\lambda^*$}  & PM1 & PM2 & LS-NNM\\
				& & (or $= 1$) & & \textsf{iter} / \textsf{Cpu} / \textsf{Res} & \textsf{iter} / \textsf{Cpu} / \textsf{Res}  & \textsf{iter} / \textsf{iter-ls} / \textsf{Cpu} / \textsf{Res}\\
				\hline
				\multirow{3}{*}{\tabincell{c}{$\bssig^{1}$}}& (3) & $=$
				& 1.748 &  45 / 0.0007 / 6.1e-13
				&104 / 0.0017 / 9.8e-13
				& 4 /   0 / 0.0005 / 9.1e-14\\
				& (4) & $<$
				& 2.277 &  26 / 0.0005 / 4.0e-13
				& 68 / 0.0013 / 8.2e-13
				& 4 /   0 / 0.0005 / 7.8e-16\\
				& (5) & $<$
				& 2.663 &  20 / 0.0003 / 5.7e-13
				& 58 / 0.0010 / 8.0e-13
				& 4 /   0 / 0.0005 / 6.3e-15\\
				\hline
				\multirow{3}{*}{\tabincell{c}{$\bssig^{2}$}}& (2,4) & $<$
				& 1.414 & 191 / 0.0045 / 9.0e-13
				&395 / 0.0097 / 9.7e-13
				& 8 /   0 / 0.0019 / 1.6e-16\\
				& (3,5) & $<$
				& 2.167 &  38 / 0.0012 / 7.3e-13
				& 92 / 0.0029 / 7.9e-13
				& 5 /   0 / 0.0015 / 2.1e-13\\
				& (4,6)& $<$
				& 2.581 &  26 / 0.0008 / 8.5e-13
				& 69 / 0.0022 / 9.8e-13
				& 5 /   0 / 0.0014 / 8.6e-16\\
				\hline
				\multirow{3}{*}{\tabincell{c}{$\bssig^{3}$}}& (3,3,3) & $=$
				& 2.045 &  72 / 0.0032 / 7.2e-13
				&158 / 0.0073 / 8.9e-13
				& 5 /   0 / 0.0015 / 4.4e-14\\
				& (4,4,4) & $<$
				& 2.469 &  42 / 0.0021 / 5.9e-13
				& 99 / 0.0052 / 8.3e-13
				& 5 /   0 / 0.0016 / 3.6e-16\\
				& (5,5,5) & $<$
				& 2.817 &  30 / 0.0015 / 6.1e-13
				& 76 / 0.0038 / 9.2e-13
				& 5 /   0 / 0.0015 / 7.9e-16\\
				\bottomrule
		\end{tabular*}}
	\end{table}
\end{center}

It can be seen from Table \ref{tab-small} that all of three methods efficiently solve the corresponding $({\boldsymbol \sigma},{\bf  p})$-spectral problem 
across different choices of ${\boldsymbol \sigma}$ and ${\bf  p}$. 
Among them, the proposed LS-NNM requires fewer iterations, achieves comparable CPU time,  
and attains the highest accuracy.
Besides, it is worth noting that, in all cases, the inner iterations of LS-NNM are zero. 
\vspace{-0.4 cm}	
\bibliographystyle{siamplain}
\bibliography{references}
\end{document}